\tikzset{curve/.style={settings={#1},to path={(\tikztostart)
			.. controls ($(\tikztostart)!\pv{pos}!(\tikztotarget)!\pv{height}!270:(\tikztotarget)$)
			and ($(\tikztostart)!1-\pv{pos}!(\tikztotarget)!\pv{height}!270:(\tikztotarget)$)
			.. (\tikztotarget)\tikztonodes}},
	settings/.code={\tikzset{quiver/.cd,#1}
		\def\pv##1{\pgfkeysvalueof{/tikz/quiver/##1}}},
	quiver/.cd,pos/.initial=0.35,height/.initial=0}
\tikzset{tail reversed/.code={\pgfsetarrowsstart{tikzcd to}}}
\tikzset{2tail/.code={\pgfsetarrowsstart{Implies[reversed]}}}
\tikzset{2tail reversed/.code={\pgfsetarrowsstart{Implies}}}
\tikzset{no body/.style={/tikz/dash pattern=on 0 off 1mm}}
\author{Joshua L. Wrigley}
\thanks{The author is grateful for the support of his supervisor, Olivia Caramello, and acknowledges the financial support of the Insubria-Huawei studentship into ``Grothendieck toposes for information and computation''.}
\address{Dipartimento di scienza e alta tecnologia, Università degli Studi dell'Insubria,\\
 Via Valleggio 11, Como, 22100 (CO), Italy.
}
\title{Some properties of internal locale morphisms externalised}
\keywords{Internal locale, internal nucleus, locale, nucleus, Grothendieck topos}
\newtheorem{thm}{Theorem}
\newtheorem{lem}{Lemma}
\newtheorem{coro}[thm]{Corollary}
\newtheorem{prop}[thm]{Proposition}
\setlist{listparindent = \parindent, parsep=0pt,}
\newcommand{\yo}{\!\text{\usefont{U}{min}{m}{n}\symbol{'207}}}
\DeclareFontFamily{U}{min}{}
\DeclareFontShape{U}{min}{m}{n}{<-> udmj30}{}
\newcommand{\lrset}[2]{\left\{\,{#1}\,\middle\vert\,{#2}\,\right\}}
\renewcommand{\to}{\rightarrow}
\newcommand{\Aut}{{\rm Aut}}
\newcommand{\End}{{\rm End}}
\newcommand{\rmopen}{{\rm open}}
\newcommand{\rmcan}{{\rm can}}
\newcommand{\can}{{\rm can}}
\newcommand{\rmtriv}{{\rm triv}}
\newcommand{\BG}{{\bf B}G}
\newcommand{\sets}{{\bf Sets}}
\newcommand{\Finsets}{{\bf FinSets}}
\newcommand{\Loc}{{\bf Loc}}
\newcommand{\Frm}{{\bf Frm}}
\newcommand{\Poset}{{\bf PoSet}}
\newcommand{\CAT}{\mathfrak{CAT}}
\newcommand{\cat}{\mathcal{C}}
\newcommand{\dcat}{\mathcal{D}}
\newcommand{\op}{^{\rm op}}
\newcommand{\id}{{\rm{id}}}
\newcommand{\1}{{\bf 1}}
\newcommand{\2}{{\bf 2}}
\renewcommand{\Hom}{{\rm{Hom}}}
\newcommand{\Sh}{{\bf Sh}}
\newcommand{\opens}{\mathcal{O}}
\newcommand{\Sub}{{\rm{Sub}}}
\newcommand{\Lb}{\mathbb{L}}
\newcommand{\fb}{\mathfrak{f}}
\newcommand{\ftopos}{\mathcal{F}}
\newcommand{\topos}{\mathcal{E}}
\newcommand{\Topos}{{\bf Topos}}
\newcommand{\Geom}{{\bf Geom}}
\newcommand{\CanRelFibr}{\topos \rtimes \ftopos/f^\ast(-)}
\newcommand{\theory}{\mathbb{T}}
\newcommand{\form}[2]{\left(\, \vec{{#2}},  {#1} \,\right)}
\newcommand{\Con}{{\bf Con}_\Sigma}
\newcommand{\Tmod}{\theory\text{-}{\rm mod}}
\begin{document}

\maketitle
\begin{abstract}
 We study morphisms of internal locales of Grothendieck toposes externally: treating internal locales and their morphisms as sheaves and natural transformations.  We characterise those morphisms of internal locales that induce surjective geometric morphisms and geometric embeddings, demonstrating that both can be computed `pointwise'. We also show that the co-frame operations on the co-frame of internal sublocales can also be computed `pointwise' too.
\end{abstract}

\tableofcontents


\section{Introduction}\label{sec:intro}

By and large, the topologically interesting data of a space or a continuous map is contained in the algebra of open sets and the inverse image map.  This prompted the shift to `pointfree' topology, as exposited in \cite{pointless}, where \emph{locales} replace spaces and \emph{locale morphisms} replace continuous maps.  Every topos $\topos$ (by topos, we mean Grothendieck topos) comes equipped with a rich internal language, and so also possesses a theory of \emph{internal locales}: those objects that behave, according to the internal language of $\topos$, as a locale.

By choosing certain sites, the internal locales in question correspond to certain structures of interest outside of topos theory.  Therefore, for applications it is beneficial to have a well-developed dictionary externalising notions for internal locales.  Examples of external accounts of internal locale theory can be found in \cite{JT}, \cite[\S C1.6]{elephant} and \cite{fibredsites}.

\subsection{Our contribution}

In this paper, we will show that the most commonly considered properties of internal locale morphisms admit satisfying externalisations.  We study internal locale morphisms in the style of the treatment given for localic toposes and their morphisms in \cite[\S IX]{SGL}.  We will show that 
\begin{enumerate}
	\item surjections of internal locales,
	\item embeddings of internal sublocales,
	\item and the co-frame operations on the co-frame of internal sublocales
\end{enumerate}
can all be computed \emph{`pointwise'}.


\subsection{Overview}  The paper is divided as follows.

\begin{itemize}

\item We begin in \cref{sec:prelim} by recalling some preliminaries.  A brief recount of the theory of (set-based) locales is given in \cref{subsec:prelims:locales}.  We recall in \cref{subsec:prelims:comorph_and_morph} the notions of a comorphism and morphism of sites -- two site-theoretic methods of presenting geometric morphisms.  To accommodate our site-theoretic treatment, we require a result concerning the commutativity of certain diagrams of geometric morphisms induced by a mixture of comorphisms and morphisms of sites.  This is proved in \cref{subsec:prelims:mixer}.

\item In \cref{sec:intloc}, a review is given of the classification of internal locales for the topos $\Sh(\cat,J)$ as established in \cite[Proposition VI.2.2]{JT}, when $\cat$ is assumed to be a cartesian category, and in \cite[Proposition 5.10]{fibredsites} for an arbitrary category $\cat$.  We also review the construction of the relative topos of \emph{internal sheaves} $\Sh(\Lb) \to \topos$ on an internal locale $\Lb$ of $\topos$ as described in \cite[Examples C2.5.8(c)]{elephant} and \cite[Definition 5.2]{fibredsites}.

\item Some examples of internal locales whose base categories are not cartesian are presented in \cref{sec:exs_of_intloc}, including internal locales of toposes of monoid actions.

\item Our study of internal locale morphisms begins in \cref{intlocmorphsec}.  It is well-known (see \cite[\S VI.5]{JT}, \cite[\S 2]{fact1}, or \cite[Corollary 3.5]{fibredsites}) that, given internal locales $\Lb$ and $\Lb'$ of a topos $\topos \simeq \Sh(\cat,J)$, there is an equivalence between internal locale morphisms $\fb \colon \Lb \to \Lb'$ and geometric morphisms $g$ for which the diagram
\[\begin{tikzcd}[column sep = tiny]
	\Sh(\Lb) \ar{rd} \ar{rr}{g} && \Sh(\Lb') \ar{ld} \\
	& \topos &
\end{tikzcd}\]
commutes.  We give an alternate, direct construction of this equivalence, undertaken in \cref{subsec:intmorphgeomorph}.  Our method differs from that of \cite{fibredsites} in that we also demonstrate an equivalence with the morphisms of the underlying sites of the toposes $\Sh(\Lb)$ and $\Sh(\Lb')$ (subject to a commutativity condition).

It is further shown in \cref{subsec:surj}, that the geometric morphism $\Sh(\fb)$ induced by an internal locale morphism $\fb \colon \Lb \to \Lb'$ is \emph{surjective} if and only if each component $\fb^{-1}_c \colon \Lb'(c) \to \Lb(c)$, for $c \in \cat$, is an injective frame homomorphism.  Hence, surjections of internal locales are computed `pointwise'.

\item Those internal locale morphisms that induce embeddings of subtoposes are the subject of \cref{sec:emb}.  We show that \emph{internal locale embeddings} coincide with `pointwise' locale embeddings, i.e. those morphisms of internal locales $\fb \colon \Lb \to \Lb'$ whose component $\fb_c^{-1} \colon \Lb'(c) \to \Lb(c)$ at each $c \in \cat$ is a surjective frame homomorphism.  We also introduce the notion of an \emph{internal nucleus} on an internal locale $\Lb$ of $\Sh(\cat,J)$, and show that these too correspond bijectively with internal sublocale embeddings.

\item We present in \cref{sec:subtoposes_and_quotients} an application of \emph{internal locale embeddings} to categorical logic by giving a clarifying alternative proof of the equivalence between quotient theories of a geometric theory and subtoposes of its classifying topos (see \cite[Examples B4.2.8(i)]{elephant} or \cite[Theorem 3.2.5]{TST}).

\item Finally in \cref{frameintnuc}, we study the co-frame $\Sub_{\Topos}(\Sh(\Lb))$ of subtoposes of $\Sh(\Lb)$ (see \cite[\S A4.5]{elephant} or \cite[\S 4]{TST}).  We show that the co-frame operations of $\Sub_{\Topos}(\Sh(\Lb))$ can be computed `pointwise' via the co-frame operations on $\Sub_{\Loc}(\Lb(c))$, the co-frame of sublocales of $\Lb(c)$, for each $c \in \cat$.

\end{itemize}


\section{Preliminaries}\label{sec:prelim}

We begin by recalling some background material.  Familiarity with some topos theory, as can be found in \cite{SGL}, is assumed.  We will denote the bicategory of (Grothendieck) toposes, geometric morphisms, and natural transformations between these, by $\Topos$.

The preliminary material is divided as follows.
\begin{itemize}
	\item Firstly, in \cref{subsec:prelims:locales} we recall some basic notions from locale theory.
	
	\item Our approach to internal locales is site-theoretic.  In \cref{subsec:prelims:comorph_and_morph} we recall the notions of comorphisms and morphisms of sites.
	
	\item Finally, in \cref{subsec:prelims:mixer}, in order to accommodate our site-theoretic approach we prove a necessary result concerning the commutativity of certain diagrams of geometric morphisms induced by both comorphisms and morphisms of sites.
\end{itemize}

\subsection{Frames and locales}\label{subsec:prelims:locales}

If we forget about points, topology is the study of algebras of open sets $\opens(X)$ and the action of continuous maps $f^{-1} \colon \opens(X) \to \opens(Y)$ on these open sets.  The notions of frame and frame homomorphism capture these purely algebraic aspects of topology.

\begin{df}
		A \emph{frame} $L$ is a complete lattice satisfying, for each $\{\,U_i \mid i \in I\,\} \subseteq L$ and $V \in L$, the infinite distributivity law
		\[V \land \bigvee_{i \in I} U_i = \bigvee_{i \in I} V \land U_i.\]
		A \emph{frame homomorphism} is any map between frames that preserves arbitrary joins and finite meets.  We denote the resultant category by $\Frm$.
\end{df}

Our motivating examples, the algebra of opens $\opens(X)$ of a topological space $X$ and the inverse image map $f^{-1} \colon \opens(Y) \to \opens(X)$ of a continuous map $f \colon X \to Y$, are both examples of, respectively, a frame and a frame homomorphism.  To strengthen the analogy with topological spaces, one often works with the category of \emph{locales} $\Loc \simeq \Frm\op$ instead.

\begin{nota}\label{notation}
		For a locale morphism $f \colon L \to K$, we will use $f^{-1} \colon K \to L$ to denote the corresponding frame homomorphism.  Additionally, each frame homomorphism $f^{-1} \colon K \to L$ has a right adjoint $f_\ast \colon L \to K$, since $K$ is complete.
\end{nota}

Frames are equivalently \emph{complete Heyting algebras} (see \cite[Proposition 7.3.2, Appendix 1]{picadopultr}).  The Heyting implication in a frame $L$ is given by
\[
U \to V = \bigvee \lrset{W \in L}{W \land U \leqslant V}.
\]
However, frame homomorphisms need not preserve the Heyting implication.

\begin{df}[\cite{JT}]
	We will say that a frame homomorphism $f \colon L \to K$ is \emph{open} if either of the following equivalent conditions are satisfied:
	\begin{enumerate}
		\item $f \colon L \to K$ is a complete Heyting algebra homomorphism,
		\item $f^{-1} \colon K \to L$ has a left adjoint $\exists_f$ which satisfies the \emph{Frobenius condition}:
		\[
		\exists_f (U \land f^{-1}(V)) = \exists_f (U) \land V,
		\]
		for all $U \in L$ and $V \in K$.
	\end{enumerate}
\end{df}

Open frame homomorphisms generalise open continuous maps (as can be seen by \cite[Proposition IX.7.5]{SGL}).  We will use $\Frm_{\rmopen}$ to denote the category of frames and open frame homomorphisms, and $\Loc_{\rmopen}$ to denote the opposite category $\Frm_{\rmopen}\op$.


\subsection{Comorphisms and morphisms of sites}\label{subsec:prelims:comorph_and_morph}

Here, we recall the notions of comorphisms and morphisms of sites.  These are functors between the underlying categories of two sites that, respectively, covariantly and contravariantly induce geometric morphisms between the respective sheaf toposes.  Both will be central to our site-theoretic approach.  A fuller analysis of both comorphisms and morphisms of sites can be found in \cite{denseness}.

\begin{df}[\cite{SGA4-3}]\label{df:comorphsites}
	Let $(\cat,J)$ and $(\dcat,K)$ be sites.  A \emph{comorphism of sites} 
	\[F \colon (\cat,J) \to (\dcat,K)\]
	is a functor $F \colon \cat \to \dcat$ with the \emph{cover lifting property} -- for each object $c$ of $\cat$ and $K$-covering sieve $S$ on $F(c)$, there exists a $J$-covering sieve $R$ on $c$ such that $F(R) \subseteq S$.
\end{df}

A comorphism of sites $F \colon (\cat,J) \to (\dcat,K)$ yields a geometric morphism 
\[C_F \colon \Sh(\cat,J) \to \Sh(\dcat,K)\]
(see \cite[\S III.2]{SGA4-3}).  In fact, taking the geometric morphism induced by a comorphism of sites is naturally bifunctorial.  Let ${\bf ComorphSites}$ denote the bicategory whose objects are sites, whose 1-cells are comorphisms of sites and whose 2-cells are natural transformations between comorphisms of sites.  There is a bifunctor ${\bf ComorphSites} \to \Topos$ that sends a site to its topos of sheaves, and a comorphism of sites $F$ to its induced geometric morphism $C_F$.

\begin{df}[\cite{SGA4-3}]\label{df:morphsites}
	Let $(\cat,J)$ and $(\dcat,K)$ be sites.  A \emph{morphism of sites} 
	\[F \colon (\cat,J) \to (\dcat,K)\]
	is a functor $F \colon \cat \to \dcat$ satisfying the following conditions.
	\begin{enumerate}
		\item\label{df:morphsites:covpreserv} If $S$ is a $J$-covering sieve on $c \in \cat$, then $F(S)$ is a $K$-covering family of morphisms on $F(c)$.
		\item\label{df:morphsites:terminal} Every object $d$ of $\dcat$ admits a $K$-covering sieve $\{\,d_i \to d \mid i \in I\,\}$ such that each $d_i$, for $i \in I$, has a morphism $d_i \to F(c_i)$ to the image of some $c_i \in \cat$.
		\item\label{df:morphsites:products} For any pair of objects $c_1,\,c_2$ of $\cat$ and any pair of morphisms
		\[g_1 \colon d \to F(c_1), \ g_2 \colon d \to F(c_2)\]
		of $\dcat$, there exists a $K$-covering family \[\{\,h_i \colon d_i \to d \mid i \in I\,\}\] of morphisms in $\dcat$, a pair of families
		\[\{\,f^1_i \colon c_i \to c_1 \mid i \in I\,\}, \ \{\,f^2_i \colon c_i \to c_2\mid i \in I\,\} \]
		of morphisms in $\cat$, and, for each $i \in I$, a morphism $k_i \colon d_i \to F(c'_i)$ such that the squares
		\[\begin{tikzcd}
			d_i \ar{r}{h_i} \ar{d}{k_i} & d \ar{d}{g_1} & d_i \ar{r}{h_i} \ar{d}{k_i} & d \ar{d}{g_2} \\
			F(c_i) \ar{r}{F(f^1_i)} & F(c_1) & F(c_i) \ar{r}{F(f^2_i)} & F(c_2)
		\end{tikzcd}\]
		commute.
		\item\label{df:morphsites:equalizer} For any pair of parallel arrows $f_1, \, f_2 \colon c' \to c$ of $\cat$, and any arrow $g \colon d \to F(c')$ of $\dcat$ such that $F(f_1) \circ g = F(f_2) \circ g$, there exists a $K$-covering family
		\[\{\,h_i \colon d_i \to d\mid i \in I\,\}\] of morphisms of $\dcat$, a family of morphisms
		\[\{\,e_i \colon c_i \to c'\mid i \in I\,\}\]
		of $\cat$ such that $f_1 \circ e_i = f_2 \circ e_i$ for all $ i \in I$, and, for each $i \in I$, a morphism $k_i \colon d_i \to F(c_i)$ such that the square
		\[\begin{tikzcd}
			d_i \ar{r}{h_i} \ar{d}{k_i} & d \ar{d}{g} \\
			F(c_i) \ar{r}{F(e_i)} & F(c')
		\end{tikzcd}\]
		commutes for each $i \in I$.
	\end{enumerate}
\end{df}

\begin{rem}
	In Definition \ref{df:morphsites}, conditions \cref{df:morphsites:terminal} to \cref{df:morphsites:equalizer} express that a functor preserves finite limits \emph{`relatively'}, including those finite limits that do not appear in $\cat$.  Condition \cref{df:morphsites:terminal} expresses that the terminal object is `relatively' preserved, \cref{df:morphsites:products} products, and \cref{df:morphsites:equalizer} equalizers.  If $\cat$ and $\dcat$ are both cartesian categories, then a functor $F \colon \cat \to \dcat$ satisfies conditions \cref{df:morphsites:terminal} to \cref{df:morphsites:equalizer} if and only if $F$ preserves finite limits.
\end{rem}

A morphism of sites $F \colon (\cat,J) \to (\dcat,K)$ induces a geometric morphism
\[
\Sh(F) \colon \Sh(\dcat, K) \to \Sh(\cat,J)
\]
(see \cite[\S III.1]{SGA4-3}).  Just as with comorphisms of sites, taking the geometric morphism induced by a morphism of sites is naturally bifunctorial.  Let ${\bf MorphSites}$ denote the bicategory whose objects are sites, whose 1-cells are morphisms of sites and whose 2-cells are natural transformations between morphisms of sites.  There exists a bifunctor ${\bf MorphSites}\op \to \Topos$ that sends a site to its topos of sheaves and a morphism of sites to its induced geometric morphism.


\subsection{Mixing comorphisms and morphisms of sites}\label{subsec:prelims:mixer}

To accommodate the site-theoretic approach we will take, we require a result concerning the commutativity of certain diagrams induced by geometric morphisms involving both comorphisms and morphisms of sites.

Although we will only encounter (strict) functors and Grothendieck fibrations in this paper, the following result most naturally exists in the language of \emph{Street fibrations}, introduced in \cite{streetfibr1} and developed further in \cite{streetfibr2} -- a weakening of the notion of Grothendieck fibration that accommodates the `principle of equivalence' (that constructions in category theory should only be defined up to equivalence and not equality).  Just as cloven Grothendieck fibrations correspond to functors $P \colon \cat\op \to \CAT$ (see \cite[\S B1.3]{elephant}), cloven Street fibrations correspond to \emph{pseudo-functors} $P \colon \cat\op \to \CAT$ (see \cite[\S 2.2]{relsites}).


A \emph{Street fibration} is a functor $A \colon \cat \to \topos$ such that for each object $c \in \cat$ and an arrow $e \xrightarrow{f} A(c)$, there exists a (weak) cartesian lifting $d \xrightarrow{g} c$ of $f$ by which we mean that there exists a distinguished isomorphism $h \colon e \xrightarrow{\sim} P(d)$ such that $P(g) \circ h = f$ and which is cartesian in the sense that, for any arrows $d' \xrightarrow{g'} c \in \cat$ and $A(d') \xrightarrow{k} A(d) \in \cat$ for which the triangle
\[\begin{tikzcd}
	A(d') \ar{r}{k} \ar{rd}[']{A(g')} & A(d) \ar{d}{A(g)} \\
	& A(c)
\end{tikzcd}\]
commutes, there exists a unique arrow $d' \xrightarrow{k'} d$ of $\cat$ such that the triangle
\[
\begin{tikzcd}
	d' \ar{r}{k'} \ar{rd}[']{g'} & d \ar{d}{g} \\
	& c 
\end{tikzcd}
\]
commutes and $A(k') =k$.

A \emph{morphism of Street fibrations} $A \colon \cat \to \topos$ and $B \colon \dcat \to \ftopos$ is defined as a pair of functors $F \colon \cat \to \dcat$ and $G \colon \topos \to \ftopos$ such that $F$ sends cartesian arrows to cartesian arrows and the square
\[
\begin{tikzcd}
	\cat \ar{r}{F} \ar{d}{A} \ar[phantom]{rd}[description]{\cong} & \dcat \ar{d}{B} \\
	\topos \ar{r}{G} & \ftopos
\end{tikzcd}
\]
commutes up to natural isomorphism.


\begin{lem}\label{fibrcomoprhmorphthm}
	Let $(\cat,J)$, $(\dcat,K)$, $(\topos,L)$ and $(\ftopos,M)$ be sites and let $A \colon \cat \to \topos$, $B \colon \dcat \to \ftopos$, $F \colon \cat \to \dcat$ and $G \colon \topos \to \mathcal{F}$ be functors.  Suppose that
	\begin{enumerate}
		\item the functors  $A \colon \cat \to \topos$ and $B \colon \dcat \to \ftopos$ are Street fibrations that yield comorphisms of sites
		\[
		A \colon (\cat,J) \to (\topos,L), \ \ B \colon (\dcat,K) \to (\ftopos,M),
		\]
		\item and that the pair $(F,G)$ constitute a morphism of Street fibrations
		\[
		\begin{tikzcd}
			\cat \ar{r}{F} \ar{d}{A} \ar[phantom]{rd}[description]{\cong} & \dcat \ar{d}{B} \\
			\topos \ar{r}{G} & \ftopos,
		\end{tikzcd}
		\]
		and moreover yield morphisms of sites
		\[
		F \colon (\cat,J) \to (\dcat,K), \ \ 
		G \colon (\topos,L) \to (\ftopos,M).
		\]
		Then the induced square of geometric morphisms
		\[\begin{tikzcd}
			\Sh(\cat,J) \ar{d}{C_A} \ar[phantom]{rd}[description]{\cong} & \ar{l}[']{\Sh(F)} \Sh(\dcat,K) \ar{d}{C_B} \\
			\Sh(\topos,L) & \ar{l}[']{\Sh(G)} \Sh(\ftopos,M)
		\end{tikzcd}\]
		commutes up to isomorphism.
	\end{enumerate}
\end{lem}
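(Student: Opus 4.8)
The plan is to prove that the square commutes by exhibiting a natural isomorphism between the \emph{direct image} functors of the two composite geometric morphisms $C_A \circ \Sh(F)$ and $\Sh(G) \circ C_B$, both of which have domain $\Sh(\dcat,K)$ and codomain $\Sh(\topos,L)$. Working with direct images is advantageous because they are computed \emph{without} sheafification: the direct image induced by a morphism of sites is restriction along the underlying functor, and the direct image induced by a comorphism of sites is the right Kan extension along the underlying functor, which by the cover-lifting property preserves sheaves (see \cite[\S III]{SGA4-3}). Writing $F^\ast$ for restriction along a functor $F$ and $A_\ast$ for the right Kan extension along $A$, we thus obtain, for $H \in \Sh(\dcat,K)$, natural isomorphisms
\[ (C_A \circ \Sh(F))_\ast (H) \cong A_\ast F^\ast H, \qquad (\Sh(G) \circ C_B)_\ast(H) \cong G^\ast B_\ast H, \]
where both right-hand sides are already $L$-sheaves. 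It therefore suffices to produce a natural isomorphism $G^\ast B_\ast \cong A_\ast F^\ast$; this is precisely a Beck--Chevalley condition for the (commutative up to isomorphism) square of underlying functors.

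The isomorphism $B \circ F \cong G \circ A$ induces, contravariantly, an isomorphism $F^\ast B^\ast \cong A^\ast G^\ast$ of restriction functors on presheaf categories, and I would take the comparison $\kappa \colon G^\ast B_\ast \Rightarrow A_\ast F^\ast$ to be its mate under the adjunctions $A^\ast \dashv A_\ast$ and $B^\ast \dashv B_\ast$. Using the pointwise formula for right Kan extensions, the component of $\kappa$ at a presheaf $P$ on $\dcat$ and an object $e \in \topos$ is the canonical map of limits
\[ (G^\ast B_\ast P)(e) = \varprojlim_{G(e)\downarrow B} P(d) \longrightarrow \varprojlim_{e\downarrow A} P(F(c)) = (A_\ast F^\ast P)(e) \]
induced by the comparison functor $\Phi_e \colon (e \downarrow A) \to (G(e) \downarrow B)$ sending $(c,\, e \xrightarrow{u} A(c))$ to $(F(c),\, G(e) \xrightarrow{G(u)} G(A(c)) \cong B(F(c)))$. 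Since the diagram indexed by $e \downarrow A$ is the restriction along $\Phi_e$ of the diagram indexed by $G(e) \downarrow B$, the map $\kappa$ is an isomorphism as soon as $\Phi_e$ is initial, i.e. induces isomorphisms of limits.

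The crux of the argument --- and the step I expect to be the main obstacle --- is establishing this initiality. Because $F$ and $G$ are only morphisms of sites, I do not expect $\Phi_e$ to be strictly initial; rather, the covering conditions should guarantee that the relevant comma categories $(\Phi_e \downarrow t)$ are nonempty and connected only after refinement along an $L$-covering family. This is exactly enough, since both $G^\ast B_\ast H$ and $A_\ast F^\ast H$ are $L$-sheaves and an $L$-local isomorphism between $L$-sheaves is an isomorphism, so it is harmless that initiality holds only locally. To produce objects of $(\Phi_e \downarrow t)$ for $t = (d,\, G(e) \to B(d))$, I would invoke condition \cref{df:morphsites:terminal} for $G$ to cover $G(e)$ by objects admitting maps to the image of $G$, take cartesian liftings of these along the Street fibration $B$, and then use that $F$ preserves cartesian arrows together with the fibration property of $A$ to descend the data to $e \downarrow A$; connectedness would follow in the same spirit from conditions \cref{df:morphsites:products} and \cref{df:morphsites:equalizer}, applied to $F$ and $G$, by comparing two such factorisations. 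Verifying that these local liftings assemble compatibly, and that the cartesian-ness of $F$ matches the fibration structures of $A$ and $B$ across the isomorphism $B \circ F \cong G \circ A$, is where the full force of the hypothesis that $(F,G)$ is a morphism of Street fibrations is consumed.
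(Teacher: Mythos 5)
Your opening reduction is correct, and it is a genuinely different route from the paper's: commutativity of the square can indeed be tested on direct images, which avoid sheafification ($\Sh(F)_\ast$, $\Sh(G)_\ast$ are restriction functors and $(C_A)_\ast$, $(C_B)_\ast$ are right Kan extensions), so the lemma becomes the statement that the Beck--Chevalley mate $\kappa \colon G^\ast B_\ast \Rightarrow A_\ast F^\ast$ is invertible on $K$-sheaves. (The paper instead converts $F$ and $G$ into comorphisms of sites via the comma sites $((1_\dcat \downarrow F),\Tilde{K})$ and $((1_\ftopos \downarrow G),\Tilde{M})$ of \cite[Theorem 3.16]{denseness}, constructs a connecting comorphism $H$ between them, and concludes by bifunctoriality of $C_{(-)}$.) However, your execution of the key step contains a genuine error: the variance of your pointwise formula is backwards. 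For presheaves, $(A_\ast F^\ast P)(e)$ is a limit of $P(F(c))$ over pairs $(c,\, A(c) \to e)$, and $(G^\ast B_\ast P)(e)$ is a limit of $P(d)$ over pairs $(d,\, B(d) \to G(e))$: the structure arrows point \emph{out of} the images of $A$ and $B$, not into them. Consequently the objects your comparison functor must reach are arrows $w \colon B(d) \to G(e)$, and your plan to produce them by covering $G(e)$ via condition \cref{df:morphsites:terminal} for $G$ does not typecheck (note $G(e)$ already lies in the image of $G$). The actual task is to factor $w$, \emph{$K$-locally on $d$}, as $B(d') \xrightarrow{B(h)} BF(c) \cong GA(c) \xrightarrow{G(u)} G(e)$ with $h \colon d' \to F(c)$ in $\dcat$ and $u \colon A(c) \to e$ in $\topos$; the tools for this are condition \cref{df:morphsites:terminal} for $F$ (not $G$), to land $K$-locally in the image of $F$, condition \cref{df:morphsites:products} for $G$, to compare the two resulting arrows into the image of $G$, the cover-lifting property of $B$, to pull the ensuing $M$-cover back to a $K$-cover, and finally the fibration property of $A$ together with preservation of cartesian arrows by $F$ --- precisely the ingredients of the paper's verification that its functor $H$ lifts covers.

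The second problem is that your locality mechanism does not connect to the hypotheses. The failure of initiality is repaired by $K$-covers of the objects $d$ of $\dcat$ appearing in the target comma category, not by $L$-covers of $e$; so the principle ``an $L$-local isomorphism between $L$-sheaves is an isomorphism'' is not the one available, and ``nonempty and connected after refinement along an $L$-covering family'' is not a notion your setup defines ($L$-covers of $e$ do not act on the categories $(\Phi_e \downarrow t)$ in any evident way). The correct repair transports the problem to $\dcat$: under the identifications $(G^\ast B_\ast P)(e) \cong \Hom_{[\dcat\op,\sets]}\bigl(\Hom_\ftopos(B(-),G(e)),\,P\bigr)$ and $(A_\ast F^\ast P)(e) \cong \Hom_{[\dcat\op,\sets]}\bigl(F_!\,\Hom_\topos(A(-),e),\,P\bigr)$ (using $F_! \dashv F^\ast$), the component $\kappa_P(e)$ is precomposition with a canonical map $\theta_e \colon F_!\,\Hom_\topos(A(-),e) \to \Hom_\ftopos(B(-),G(e))$ of presheaves on $\dcat$, so it suffices to show each $\theta_e$ is a $K$-local isomorphism, since every $K$-sheaf inverts such maps; no $L$-locality enters at all, and the $K$-sheafhood of $P$ is what glues the local factorisations above into an inverse. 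That local surjectivity/injectivity of $\theta_e$ is exactly the lifting-and-connectedness argument you defer, and since it is the step where every hypothesis of the lemma is consumed, the proposal as written has a genuine gap there even granting your (correct and attractive) reduction.
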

\begin{proof}
	The overarching method of the proof is to turn the morphisms of sites $F$ and $G$ into comorphisms of sites, and then appeal to the bifunctoriality of sending a comorphism of sites to its induced geometric morphism.  We are able to turn morphisms of sites into comorphisms of sites by \cite[Theorem 3.16]{denseness}.  Namely, for the morphism of sites $F \colon (\cat, J) \to (\dcat,K)$, there are functors
	\[\begin{tikzcd}
		\cat  \ar[shift left]{r}{i_F} & (1_\dcat \downarrow \! F) \ar[shift left]{l}{\pi_\cat} \ar{r}{\pi_\dcat} & \dcat
	\end{tikzcd}\]
	where
	\begin{enumerate}
		\item $(1_\dcat \downarrow \! F)$ denotes the comma category whose objects are pairs
		\[\left(c, d \xrightarrow{a} F(c)\right)\]
		of an object $c \in \cat$ and an arrow $d \to F(c)$ in $\dcat$;
		\item $\pi_\cat \colon (1_\dcat \downarrow \! F) \to \cat$ and $\pi_\dcat \colon (1_\dcat \downarrow \! F) \to \dcat$ are the respective projection functors;
		\item $i_F \colon \cat \to (1_\dcat \downarrow \! F)$ is the functor that sends $c \in \cat$ to 
		\[\left(c,  F(c)  \xrightarrow{\id_{F(c)}} F(c) \right) \in (1_\dcat \downarrow \! F).\]
	\end{enumerate}
	Moreover, when the category $(1_\dcat \downarrow \! F)$ is endowed with the Grothendieck topology $\Tilde{K}$, whose covering sieves are precisely those that are sent by $\pi_\dcat$ to $K$-covering sieves, we have that
	\begin{enumerate}
		\item $\pi_\cat \colon ((1_\dcat \downarrow \! F),\Tilde{K}) \to (\cat,J)$ is a comorphism of sites,
		\item $i_F \colon (\cat,J) \to ((1_\dcat \downarrow \! F),\Tilde{K})$ is a morphism of sites,
		\item $\pi_\dcat \colon ((1_\dcat \downarrow \! F),\Tilde{K}) \to (\dcat,K)$ is both a morphism and comorphism of sites and induces an equivalence of toposes 
		\[\Sh((1_\dcat \downarrow \! F),\Tilde{K}) \simeq \Sh(\dcat,K),\]
	\end{enumerate}
	and also that $\Sh(F) = C_{\pi_\cat} \circ \Sh(\pi_\dcat)$, and $C_{\pi_\dcat}$ is an inverse to $ \Sh(\pi_\dcat)$.  Similarly, there are functors
	\[\begin{tikzcd}
		\topos \ar[shift left]{r}{i_G} & (1_\ftopos \downarrow \! G) \ar[shift left]{l}{\pi_\topos} \ar{r}{\pi_\ftopos} & \ftopos
	\end{tikzcd}\]
	with analogous properties, in particular $\Sh(G) = C_{\pi_\topos} \circ \Sh(\pi_\ftopos)$ and $C_{\pi_\ftopos}$ is an inverse for $\Sh(\pi_\ftopos)$.
	
	We construct a comorphism of sites $H \colon ((1_\dcat \downarrow\! F),\Tilde{K}) \to ((1_\ftopos \downarrow\! G),\Tilde{M})$ such that the diagram
	\begin{equation}\label{commutingcomorphs}
		\begin{tikzcd}
			\cat \ar{d}{A} \ar[phantom]{rd}[description]{\cong} & \ar{l}[']{\pi_\cat} \ar{d}{H} (1_\dcat \downarrow\! F) \ar[phantom]{rd}[description]{\cong} \ar{r}{\pi_\dcat} & \dcat \ar{d}{B} \\
			\topos & \ar{l}[']{\pi_\topos} \ar{r}{\pi_\ftopos} (1_\ftopos\downarrow\! G) & \ftopos
		\end{tikzcd}
	\end{equation}
	commutes up to isomorphism.  Define the functor $H$ as sending an object $\left(c, d \xrightarrow{a} F(c)\right)$ to
	\[\left(A(c), B(d) \xrightarrow{B(a)} B(F(c)) \cong G(A(c)) \right) ,\]
	where we have used that the square
	\begin{equation}\label{fibrcomoprhmorphthm:eq:morph_of_fibr}
		\begin{tikzcd}
			\cat \ar{r}{F} \ar{d}{A} \ar[phantom]{rd}[description]{\cong} & \dcat \ar{d}{B} \\
			\topos \ar{r}{G} & \ftopos,
		\end{tikzcd}
	\end{equation}
	commutes up to isomorphism.  Similarly, $H$ is defined to send an arrow 
	\[ \left(c', d' \xrightarrow{a'} F(c')\right) \xrightarrow{(g,h)} \left(c, d \xrightarrow{a} F(c)\right)\]
	to
	\[  \left(\!A(c'), B(d') \xrightarrow{B(a')} B(F(c')) \cong G(A(c'))\!\right) \xrightarrow{(A(g),B(h))} \left(\!A(c), d \xrightarrow{B(a)} B(F(c)) \cong G(A(c)) \!\right).\]
	The functor $H$ clearly makes the diagram \cref{commutingcomorphs} commute up to isomorphism.
	
	It remains to show that $H$ has the cover lifting property.  Let 
	\[S = \lrset{\left(e_i, f_i \xrightarrow{b} G(e_i)\right) \xrightarrow{(g_i,h_i)} \left(A(c), B(d) \xrightarrow{B(a)} B(F(c)) \cong G(A(c))\right) }{i \in I}\]
	be a $\Tilde{M}$-covering sieve, i.e. $\pi_\ftopos(S) = \lrset{f_i \xrightarrow{h_i} B(d)}{i \in I}$ is $M$-covering.  As $A$ is a fibration, there exists, for each $i \in I$, a (weak) cartesian lifting of $ e_i \xrightarrow{g_i} A(c) \in \topos$ to an arrow $c'\xrightarrow{g'} c \in \cat$.  Since the square \cref{fibrcomoprhmorphthm:eq:morph_of_fibr} is also a morphism of fibrations, the arrow $ F(c') \xrightarrow{F(g')} F(c) \in \dcat$ is cartesian too.  Now we apply the fact that $B $ has the cover lifting property to deduce the existence of a $K$-covering sieve $R$ on $d$ such that $B(R) \subseteq \pi_\ftopos(S)$, i.e. for each $ d' \xrightarrow{k} d$ in $R$, there exists an $i \in I$ such that $B(k)$ factors as
	\[\begin{tikzcd}
		B(d') \ar[bend left]{rrrr}{B(k)} \ar[dashed]{r} & f_i \ar{d}{b} \ar{rrr}{h_i} &[-35pt] && B(d) \ar{d}{B(a)} &[-35pt] \\
		& G(e_i) &[-35pt] \cong B(F(c')) \ar{rr}{B(F(g'))} && B(F(c)) &[-35pt] \cong G(A(c)).
	\end{tikzcd}\]
	As $F(g')$ is cartesian, there is a unique arrow $  d' \xrightarrow{\gamma} F(c') \in \dcat$ making the square
	\[\begin{tikzcd}
		d' \ar{r}{k} \ar{d}{\gamma} & d \ar{d}{a} \\
		F(c') \ar{r}{F(g')} & F(c)
	\end{tikzcd}\]
	commute.  Hence, as $R$ is a $K$-covering sieve,
	\[\lrset{\left(c', d' \xrightarrow{\gamma} F(c')\right) \xrightarrow{(g',k)} \left(c, d \xrightarrow{a} F(c)\right)}{k \in R}\]
	is a $\Tilde{K}$-covering lifting of $S$, whence $H$ is a comorphism of sites
	\[
	H \colon ((1_\dcat \downarrow \! F),\Tilde{K}) \to (
	(1_\ftopos \downarrow \! G), \Tilde{M})
	\]
	as desired.
	
	By the commutation of \cref{commutingcomorphs} up to isomorphism, we deduce that the induced diagram of geometric morphisms
	\begin{equation*}
		\begin{tikzcd}
			\Sh(\cat ,J) \ar{d}{C_A} \ar[phantom]{rd}[description]{\cong} & \ar{l}[']{C_{\pi_\cat}} \ar{d}{C_H} \Sh((1_\dcat \downarrow\! F),\Tilde{K}) \ar[phantom]{rd}[description]{\cong} \ar{r}{C_{\pi_\dcat}} & \Sh(\dcat,K) \ar{d}{C_B} \\
			\Sh(\topos,L) & \ar{l}[']{C_{\pi_\topos}} \ar{r}{C_{\pi_\ftopos}} \Sh((1_\ftopos\downarrow\! G),\Tilde{M}) & \Sh(\ftopos,M)
		\end{tikzcd}
	\end{equation*}
	commutes up to isomorphism too.  Thereby, we conclude that
	\[\begin{split}
		C_A \circ \Sh(F) &= C_A \circ C_{\pi_\cat} \circ \Sh(\pi_\dcat), \\
		& \simeq  C_{\pi_\topos} \circ C_H \circ \Sh(\pi_\dcat), \\
		& = C_{\pi_\topos} \circ \Sh(\pi_\ftopos) \circ C_{\pi_\ftopos} \circ C_H \circ \Sh(\pi_\dcat), \\
		& \simeq \Sh(G) \circ C_B \circ C_{\pi_\dcat} \circ \Sh(\pi_\dcat) , \\
		& = \Sh(G) \circ C_B
	\end{split}\]
	as required.
\end{proof}

\begin{nota}
	In this paper, we will be exclusively concerned with faithful fibrations.  Let $P \colon \topos\op \to \Poset$ be a functor.  By $\topos \rtimes P$ we denote the Grothendieck construction (see \cite[Definition B1.3.1]{elephant} for the general definition), the category which has:
	\begin{enumerate}
		\item as objects, pairs $(e,x)$ where $e$ is an object of $\topos$ and $x$ is an element of $P(e)$,
		\item and an arrow $f \colon (e,x) \to (d,y)$ for each arrow $f \colon e \to d$ in $\topos$ such that $x \leqslant P(f)(y)$.
	\end{enumerate}
	The evident projection functor $p_P \colon \topos \rtimes P \to \topos$ is faithful and a fibration; in fact -- assuming the axiom of choice -- every faithful fibration is of the form $\topos \rtimes P$ for some $P$ (see \cite[\S B1.3]{elephant}).
\end{nota}



\section{Internal locales}\label{sec:intloc}

An \emph{internal locale} of a topos $\topos$ is an object that, according to the internal language of $\topos$, carries the structure of a locale (equivalently, a complete Heyting algebra).

\begin{exs}
	We give some elementary examples.  More will be presented in \cref{sec:exs_of_intloc}.
	\begin{enumerate}
		\item Unsurprisingly, the internal locales of $\sets$, the topos of sets, are just locales.
		\item For any topos $\topos$, the subobject classifier $\Omega_\topos$ is an internal locale of $\topos$.  In fact, we will see in Corollary \ref{terminallocale} that $\Omega_\topos$ is the terminal internal locale in $\topos$.
	\end{enumerate}
\end{exs}

Internal locales can be understood both \emph{internally} and \emph{externally}.  We devote this section to a review of the external treatment of internal locales: that is, given a Grothendieck topos $\topos$ with a site of definition $(\cat,J)$, a classification for which $J$-sheaves $\mathbb{L} \colon \cat\op \to \sets$ correspond to internal locales of $\topos \simeq \Sh(\cat,J)$.

An externalised treatment of internal locales can be found in \cite[\S VI]{JT} and \cite[\S C1.6]{elephant} for the special case when $\cat$ is \emph{cartesian} (i.e. $\cat$ has all finite limits).  When $\cat$ is non-cartesian, \cite[\S 5]{fibredsites} establishes a classification of internal locales of $\Sh(\cat,J)$, which we recall below.

We proceed as follows.
\begin{itemize}
	\item An overview of the classification of internal locales of $\sets^{\cat\op}$, where $\cat$ is an arbitrary category, as calculated in \cite[\S 5]{fibredsites}, is given in \cref{noncartsiteintloc} to \cref{subsec:rel_BC}.  Our exposition is divided into three parts.
	\begin{itemize}
		\item In the first third, we recall some facts regarding localic geometric morphisms.  These correspond up to isomorphism with internal locales.
		\item In the middle third, we give an explicit description for the topos of internal sheaves on an internal locale.
		\item Finally, we recall the \emph{relative Beck-Chevalley condition}, which yields the classification of internal locales given in \cite{fibredsites}.
	\end{itemize}
	We will also observe that this characterisation subsumes the previous result from \cite[\S VI]{JT} for internal locales over a cartesian base category.
	
	\item Finally, \cref{sec:sheafintloc} demonstrates how a classification of the internal locales of $\sets^{\cat\op}$ yields a classification of the internal locales of $\Sh(\cat,J)$.
\end{itemize}

\begin{nota}
	Given a functor $\Lb \colon \cat\op \to \Frm_{\rmopen}$, an object $c$ and an arrow $g$ of $\cat$, when there is no confusion we will use the shorthand $\Lb_c$ for $\Lb(c)$, $g^{-1}$ for $\Lb(g)$ and $\exists_{g}$ for the left adjoint to $\Lb(g)$.
\end{nota}

\subsection{Localic geometric morphisms}\label{noncartsiteintloc}

Over the next few sections, we re-exposit the classification of internal locales of $\sets^{\cat\op}$ for an arbitrary category $\cat$ as can be found in \cite[\S 5]{fibredsites}.  The `keystone' property used in the classification of internal locales is the connection between internal locales and localic geometric morphisms.

\begin{df}
	A geometric morphism $f \colon \ftopos \to \topos$ is \emph{localic} if every object $F$ of $\ftopos$ is a subquotient of $f^\ast(E)$ for some $E \in \topos$, i.e. there exists $F' \in \ftopos$ and a diagram
	\[\begin{tikzcd}
		F & F' \ar[two heads]{l} \ar[tail]{r} & f^\ast(E).
	\end{tikzcd}\]
\end{df}

Localic geometric morphisms $f \colon \ftopos \to \topos$ correspond bijectively (up to isomorphism) to internal locales of $\topos$ via the following result.

\begin{thm}[\cite{topos}]\label{localicmorph} 
	For a geometric morphism $f \colon \ftopos \to \topos$, the following are equivalent:
	\begin{enumerate}
		\item $f$ is a localic geometric morphism,
		\item $\ftopos$ is the topos of internal sheaves on an internal locale of $\topos$, and moreover this internal locale can be taken as $f_\ast(\Omega_\ftopos)$.
	\end{enumerate}
\end{thm}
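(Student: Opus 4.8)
The plan is to establish the equivalence of the two conditions in \cref{localicmorph} by constructing, from a localic geometric morphism $f \colon \ftopos \to \topos$, an internal locale of $\topos$ whose topos of internal sheaves recovers $\ftopos$ over $\topos$, and conversely by verifying that the structure morphism $\Sh(\Lb) \to \topos$ of any internal locale is localic. The direction $(ii) \Rightarrow (i)$ I expect to be the more routine one: if $\ftopos = \Sh(\Lb)$, then every internal sheaf on $\Lb$ is, by the very construction of the relative topos recalled in \cref{sec:intloc}, assembled from the frame data of $\Lb$, and the objects $f^\ast(E)$ generate under subquotients because the opens of $\Lb$ themselves arise as subobjects of pullbacks of objects of $\topos$. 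I would make this precise by exhibiting each $F \in \ftopos$ as a subquotient of some $f^\ast(E)$ directly from the explicit description of $\Sh(\Lb)$.

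The substantive direction is $(i) \Rightarrow (ii)$. First I would set $\Lb := f_\ast(\Omega_\ftopos)$ and show it carries the structure of an internal locale in $\topos$. Since $f_\ast$ is a right adjoint it preserves finite limits, and $\Omega_\ftopos$ is an internal frame (indeed internal complete Heyting algebra) in $\ftopos$; the internal poset and finite-meet structure therefore transport along $f_\ast$. The delicate point is \emph{completeness}: that $\Lb$ admits arbitrary internal joins satisfying infinite distributivity, even though $f_\ast$ does not \emph{a priori} preserve the relevant internal colimits. Here I would use that $f$ is localic, which forces $\Omega_\ftopos$ to be generated over $\topos$ in a controlled way, so that internal suprema in $\Lb$ can be computed from the frame structure of $\Omega_\ftopos$ together with the adjunction $f^\ast \dashv f_\ast$. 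This is where I would lean on the external/site-theoretic machinery: presenting $\topos \simeq \Sh(\cat,J)$ and describing $\Lb$ as the sheaf $c \mapsto \Sub_{\ftopos}(f^\ast(\yo c))$ of subobjects, whose frame operations are inherited pointwise.

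The heart of the argument is then the comparison $\Sh(\Lb) \simeq \ftopos$ over $\topos$. I would construct a canonical geometric morphism $\Sh(\Lb) \to \ftopos$ commuting with the structure maps to $\topos$ and prove it is an equivalence. The natural strategy is to identify both toposes as the topos of sheaves for a suitable internal site built from $\Lb$, and to use that the localic condition on $f$ says precisely that $\topos$-indexed generators for $\ftopos$ are supplied by subquotients of objects pulled back from $\topos$ — exactly the data encoded by the subobjects of $f^\ast(E)$, i.e. by $\Lb$. \textbf{I expect the main obstacle} to be verifying that this comparison functor is fully faithful and essentially surjective without circularity, since both the statement and its natural proof flirt with simply \emph{defining} $\Sh(\Lb)$ to be $\ftopos$; the genuine content is that the intrinsic frame $f_\ast(\Omega_\ftopos)$, reconstructed only from $\topos$ and the functor $f_\ast$, contains enough information to rebuild all of $\ftopos$. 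Controlling the internal completeness of $\Lb$ and matching the internal-sheaf condition against the localic generation property is the crux. Since \cref{localicmorph} is attributed to \cite{topos}, I would, in practice, cite that source for the full reconstruction and limit the present proof to indicating how the site-theoretic description of $\Lb$ makes each step transparent.
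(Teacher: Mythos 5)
The paper does not actually prove \cref{localicmorph}: it is recalled as a classical result with a citation to \cite{topos}, so your closing instinct --- cite the source and merely indicate how the site-theoretic description illuminates each step --- coincides exactly with the paper's own treatment. Moreover, the machinery the paper recalls immediately after the statement (in \cref{subsec:topos_of_int_sh} and \cref{subsec:rel_BC}, following \cite{fibredsites}) is the skeleton of your sketch: externalise $\Lb = f_\ast(\Omega_\ftopos)$ as the functor $\Sub_\ftopos(f^\ast \circ \ell_\cat(-))$ and compare sites. Your direction (ii) $\Rightarrow$ (i) is indeed the routine one, and your outline of it is right: every sheaf on $\cat \rtimes \Lb$ is a quotient of a coproduct of representables $\ell_{\cat \rtimes \Lb}(c,U)$, each of which is a subobject of $C_{p_\Lb}^\ast(\ell_\cat(c))$, so every object is a subquotient of an object pulled back from the base.

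Two corrections about where the substance of (i) $\Rightarrow$ (ii) actually lies, neither fatal but both worth registering. First, internal completeness of $f_\ast(\Omega_\ftopos)$ is not the delicate point once you pass to the external description: each $\Sub_\ftopos(f^\ast(\ell_\cat(c)))$ is a complete Heyting algebra automatically (\cite[Proposition III.8.1]{SGL}) and the transition maps are automatically open (\cite[Proposition III.8.2]{SGL}); what genuinely needs verification is that this functor $\cat\op \to \Frm_{\rmopen}$ assembles into an internal locale, i.e.\ satisfies the relative Beck--Chevalley (stability) condition of \cref{subsec:rel_BC}. Second, the ``main obstacle'' you anticipate --- establishing $\Sh(\Lb) \simeq \ftopos$ over $\topos$ without circularity --- is resolved by a specific lemma that your sketch never names: the full subcategory $\topos \rtimes \Sub_\ftopos(f^\ast(-))$ of the canonical relative site $(\CanRelFibr,\Tilde{J}_\can)$ is dense if and only if $f$ is localic (\cite[Proposition 4.1]{fibredsites}); one then restricts further to the dense subcategory $\cat \rtimes \Lb$ and applies the comparison lemma to obtain $\ftopos \simeq \Sh(\cat \rtimes \Lb, K_\Lb)$. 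That denseness statement is precisely where the subquotient hypothesis gets used, and it is the step a complete write-up would have to supply (or explicitly cite) rather than gesture at via ``enough generators''.
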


This bijection can be visualised with the `bridge' diagram:
\[\begin{tikzcd}
	&& {\ftopos \simeq {\bf Sh}(\cat,J)} \\
	&& {{\begin{matrix}\topos \\ \textit{localic morphism}\end{matrix}}} \\
	{{\begin{matrix}f_\ast(\Omega_\ftopos) \\ \textit{direct image of}\\ \textit{subobject classifier}\end{matrix}}} &&&& {{\begin{matrix}\Lb \in \topos \\ \textit{internal locale.}\end{matrix}}}
	\arrow[""{name=0, anchor=center, inner sep=0}, "f", from=1-3, to=2-3]
	\arrow[curve={height=-24pt}, shorten >=21pt, dashed, tail reversed, from=3-1, to=0]
	\arrow[curve={height=24pt}, shorten >=20pt, dashed, tail reversed, from=3-5, to=0]
\end{tikzcd}\]

Let $\Lb$ be an internal locale of $\topos \simeq \Sh(\cat,J)$.  It appears as the direct image of the subobject classifier $f_\ast(\Omega_\ftopos) \cong \Lb$ for some localic geometric morphism $f \colon \ftopos \to \topos$.  Considered as a sheaf $f_\ast(\Omega_\topos) \colon \topos\op \to \sets$ on the canonical site $(\topos,J_{\rmcan})$ for $\topos$, there is the chain of isomorphisms
\begin{equation*}
	\begin{split}
		f_\ast(\Omega_\ftopos) & \cong \Hom_\topos(-,f_\ast(\Omega_\ftopos)),\\
		&\cong \Hom_\ftopos(f^\ast(-),\Omega_\ftopos), \\
		& \cong \Sub_\ftopos(f^\ast(-))
	\end{split}
\end{equation*}
(here, the first isomorphism is by the Yoneda lemma).  Hence, by composing with the canonical morphism $\ell_\cat \colon \cat \to \Sh(\cat,J)$ (that is, the Yoneda embedding followed by sheafification), we obtain the isomorphism of $J$-sheaves:
\begin{equation}\label{subobjdescr}
	\Lb \cong \Sub_\ftopos(f^\ast\circ \ell_\cat(-)) \colon \cat\op \to \sets.
\end{equation}
Thus, we can observe some basic facts about the internal locale $\Lb$:
\begin{enumerate}
	\item for each object $c$ of $\cat$, $\Lb(c)$ is a complete Heyting algebra, or frame, by \cite[Proposition III.8.1]{SGL};
	\item for each arrow $f \colon c \to d$ of $\cat$, by \cite[Proposition III.8.2]{SGL}, $\Lb(f) \colon \Lb(d) \to \Lb(c)$ is an open frame homomorphism. 
\end{enumerate}
Although not every such functor $\Lb' \colon \cat\op \to \Frm_{\rmopen}$ will yield an internal locale, it is possible to characterise when they do.


\subsection{The topos of internal sheaves}\label{subsec:topos_of_int_sh}

What if we start with an internal locale?  Are we able to present the associated localic geometric morphism with an explicit site?  This is instantiated by constructing the topos of \emph{internal sheaves} on an internal locale.  We review here the explicit construction of a site for this topos given in \cite{fibredsites}.  The construction is obtained by considering progressively smaller dense subsites of the \emph{canonical relative site} of a geometric morphism.

\begin{enumerate}

\item Recall from \cite{denseness} that for each geometric morphism $f \colon \ftopos \to \topos$, there is a \emph{canonical relative site} $(\CanRelFibr,\Tilde{J}_\can)$ for which there is an equivalence
\[
\Sh(\CanRelFibr,\Tilde{J}_\can) \simeq \ftopos.
\]
The underlying category $\CanRelFibr$ can also be written as the comma category $(1_\ftopos \downarrow f^\ast)$: it is the category whose objects are pairs $\left(E, F \to f^\ast(E)\right)$ of an object $E \in \topos$ and an arrow $F \to f^\ast(E) \in \ftopos$.  The Grothendieck topology $\Tilde{J}_\can$ has, as covering sieves, those sieves 
\[\lrset{\left(E_i, F_i \to f^\ast(E_i)\right) \xrightarrow{(f_i,g_i)}\left(E, F \to f^\ast(E)\right)}{i \in I}\]
for which the set of morphisms $\lrset{F_i \xrightarrow{g_i} F}{i \in I}$ is jointly epimorphic (or covering in the canonical topology on $\ftopos$).

\item The full subcategory 
\[\topos \rtimes \Sub_{\ftopos}(f^\ast(-)) \subseteq \CanRelFibr \]
(denoted by $(1_\ftopos \downarrow^{\rm Sub} f^\ast)$ in \cite{fibredsites}) on objects $(E,F \rightarrowtail f^\ast(E))$, where $F$ is a subobject of $f^\ast(E)$, is a $\Tilde{J}_\can$-dense subcategory if and only if $f \colon \ftopos \to \topos$ is a localic geometric morphism (see \cite[Proposition 4.1]{fibredsites}).  Hence, if $f$ is localic, there is an equivalence
\[\ftopos \simeq \Sh(\topos \rtimes \Sub_\ftopos(f^\ast (-)),{\Tilde{J}_\can}|_{\topos \rtimes \Sub_\ftopos(f^\ast (-))}).\]

\item Suppose that $\topos $ is the presheaf topos $ \sets^{\cat\op}$, and let $\Lb \colon \cat\op \to \Frm_{\rmopen}$ be an internal locale of $\topos$.  By $f \colon \ftopos \to \topos$ denote the associated localic geometric morphism for which $\Lb \cong \Sub_{\ftopos}(f^\ast \circ \yo_\cat (-))$.  Since the representable presheaves generate $\sets^{\cat\op}$, we immediately have that the subcategory
\[\cat \rtimes \Lb \simeq \cat \rtimes \Sub_{\ftopos}(f^\ast \circ \yo_\cat (-)) \subseteq  \topos \rtimes \Sub_{\ftopos}(f^\ast(-)) \subseteq \CanRelFibr,\]
is the inclusion of a $\Tilde{J}_\can$-dense subcategory $\cat \rtimes \Lb \subseteq \CanRelFibr$, where we have associated $(c,V) \in \cat \rtimes \Lb$ with the object $(\yo(c),V \rightarrowtail f^\ast(\yo(c))) \in \CanRelFibr$.  Thus, by the comparison lemma (see \cite[\S III.4]{SGA4-3}), there is an equivalence
\[\ftopos \simeq \Sh(\cat \rtimes \Lb, \Tilde{J}_\can|_{\cat \rtimes \Lb}).\]  
\end{enumerate}


\begin{df}[\cite{fibredsites}]
	Let $\Lb$ be an internal locale of $\sets^{\cat\op}$.  The topos 
	\[\Sh(\cat \rtimes \Lb,\Tilde{J}_\can|_{\rm \cat \rtimes \Lb})\]
	constructed above is called the \emph{topos of internal sheaves} (or just \emph{topos of sheaves}) on $\Lb$.  We will use $K_\Lb$ to denote the Grothendieck topology $\Tilde{J}_\can|_{\cat \rtimes \Lb}$, and will also sometimes denote the topos $\Sh(\cat \rtimes \Lb,K_\Lb)$ by just $\Sh(\Lb)$.  A sieve $S$ in $\cat \rtimes \Lb$ is $K_\Lb$-covering if and only if $S$ contains a small family $\lrset{(c_i,U_i) \xrightarrow{f_i} (d,V)}{ i \in I}$ in $\cat \rtimes \Lb$ such that
	\[V = \bigvee_{i \in I} \exists_{f_i} U_i.\]
\end{df}


The localic geometric morphism associated to $\Lb$ is also recovered as a site-theoretic construction.  The projection $p_\Lb \colon \cat \rtimes \Lb \to \cat $ yields a comorphism of sites
\[
 p_\Lb \colon (\cat \rtimes \Lb, K_\Lb) \to (\cat, J_\rmtriv).
\]
and since $p_\Lb$ is faithful, the geometric morphism it induces ${C_{p_\Lb} \colon \Sh(\cat \rtimes \Lb, K_\Lb) \to \sets^{\cat\op}}$ is localic (see \cite[Proposition 7.11]{denseness}).


\begin{rem}\label{subobjclassforintloc}
	Let $\Lb$ be an internal locale of $\sets^{\cat\op}$.  The projection $p_\Lb \colon \cat \rtimes \Lb \to \cat$ has a right adjoint $t_\Lb \colon \cat \to \cat \rtimes \Lb$ that sends each object $c \in \cat$ to $(c,\top_c)$.  Therefore, by the description of the direct image functor ${C_{p_\Lb}}_\ast$ found in \cite[Theorem VII.10.4]{SGL}, for each $c \in \cat$, there is an isomorphism of frames
	\[\{\,V \in \Lb_c \mid V\leqslant \top_c \,\} \cong \Lb_c \cong {C_{p_\Lb}}_\ast \left(\Omega_{\Sh(\Lb)}\right)(c) \cong \Omega_{\Sh(L)} \circ t_\Lb (c) \cong \Omega_{\Sh(\Lb)}(c,\top_c).\]
	It is not hard to recognise that this isomorphism can be extended so that, for each object $(c,U)$ of $\cat \rtimes \Lb$, there is an isomorphism
	\[\{\,V \in \Lb_c \mid V\leqslant U \,\} \cong \Omega_{\Sh(\Lb)}(c,U),\]
	and that, for each morphism $(c,U) \xrightarrow{f} (d,W)$ of $\cat \rtimes \Lb$, the transition map
	\[\Omega_{\Sh(\Lb)}(f) \colon \Omega_{\Sh(\Lb)}(d,W) \to \Omega_{\Sh(\Lb)}(c,U)\]
	sends $V \in \Omega_{\Sh(\Lb)}(d,W)$ to $f^{-1}(V) \land U \in \Omega_{\Sh(\Lb)}(c,U)$.
\end{rem}


\subsection{The relative Beck-Chevalley condition}\label{subsec:rel_BC}

Given any functor 
\[\Lb \colon \cat\op \to \Frm_{\rmopen},\]
we are still able to define $K_\Lb$ as the function that assigns to each object $(d,V)$ of $\cat \rtimes \Lb$ the collection $K_\Lb(c)$ of sieves $\lrset{(c_i,U_i) \xrightarrow{f_i} (d,V)}{i \in I}$ in $\cat \rtimes \Lb$ such that $V = \bigvee_{i \in I} \exists_{f_i} U_i$.  

However, $K_\Lb$ is not necessarily a Grothendieck topology on $\cat \rtimes \Lb$.  The assignment of sieves $K_\Lb$ clearly satisfies the maximality and transitivity conditions, but $K_\Lb$ does not always satisfy the stability condition (see \cite[Definition III.2.1]{SGL}).

When $K_\Lb$ does define a Grothendieck topology, the topos $\Sh(\cat \rtimes \Lb, K_\Lb)$ is also definable and moreover the geometric morphism 
\[C_{p_\Lb} \colon \Sh(\cat \rtimes \Lb, K_\Lb) \to \sets^{\cat\op},\] induced by the projection $p_\Lb \colon \cat \rtimes \Lb \to \cat$ considered as a comorphism of sites
\[p_\Lb \colon (\cat \rtimes \Lb, K_\Lb) \to (\cat,J_{\rm triv}),\]
is localic by \cite[Proposition 7.11]{denseness}.  Since each fibre has a top element, the functor $p_\Lb$ has a left adjoint $t_\Lb \colon \cat \to \cat \rtimes \Lb$ that sends $c \in \cat$ to the object $(c,\top_c) \in \cat \rtimes \Lb$.  Therefore, the direct image functor ${C_{p_\Lb}}_\ast$ of the induced geometric morphism acts as $-\circ t_\Lb$ by \cite[Theorem VII.10.4]{SGL}.  It is not difficult to calculate, as is done in \cite[\S 5]{fibredsites}, that 
\[\Lb \cong {C_{p_\Lb}}_\ast(\Omega_{\Sh(\cat \rtimes \Lb, K_\Lb)}) \cong \Omega_{\Sh(\cat \rtimes \Lb, K_\Lb)} \circ t_\Lb.\]

In the language of \cite[Definition 5.1]{fibredsites}, if $K_\Lb$ is a Grothendieck topology on $\cat \rtimes \Lb$, then the site $(\cat \rtimes \Lb,K_\Lb)$ is an example of an \emph{existential site}, $K_\Lb$ is an \emph{existential topology} and $\Sh(\cat \rtimes \Lb,K_\Lb)$ is an \emph{existential topos}.  Thus, we arrive at the classification of internal locales in the topos $\sets^{\cat\op}$ established in \cite[\S 5]{fibredsites}.

\begin{df}[\cite{fibredsites}]
	A functor $\Lb \colon \cat\op \to \Frm_{\rmopen}$ is said to satisfy the \emph{relative Beck-Chevalley condition} if, given an arrow $e \xrightarrow{h} d$ of $\cat$, and a sieve $S$ of $\cat \rtimes \Lb$ on the object $(d,V)$ for which $V = \bigvee_{f \in S} \exists_f U$, then
	\[h^{-1}(V) = \bigvee_{g \in h^\ast(S)} \exists_g W,\]
	where $h^\ast(S)$ is the sieve on $(e,h^{-1}(V))$ given by those arrows $(c,W) \xrightarrow{g} (e,h^{-1}(V))$ such that the composite $(c,W) \xrightarrow{g} (e,h^{-1}(V)) \xrightarrow{h} (d,V)$ is in $S$.
\end{df}

\begin{thm}[\cite{fibredsites}]
	Let $\Lb \colon \cat\op \to \Frm_{\rmopen}$ be a functor.  The following are equivalent:
	\begin{enumerate}
		\item $\Lb$ is an internal locale of $\sets^{\cat\op}$,
		\item $\Lb$ satisfies the relative Beck-Chevalley condition,
		\item $K_\Lb$ is a Grothendieck topology on $\cat \rtimes \Lb$.
	\end{enumerate}
\end{thm}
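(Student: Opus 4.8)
The plan is to prove the equivalence by treating $\text{(i)}\Leftrightarrow\text{(iii)}$ as essentially already done in the preceding discussion, and then concentrating on the genuinely new content, namely $\text{(ii)}\Leftrightarrow\text{(iii)}$. For $\text{(iii)}\Rightarrow\text{(i)}$ I would simply invoke the calculation recalled in \cref{subsec:rel_BC}: when $K_\Lb$ is a Grothendieck topology, the faithful comorphism $p_\Lb \colon (\cat \rtimes \Lb, K_\Lb) \to (\cat,J_{\rm triv})$ induces a \emph{localic} geometric morphism $C_{p_\Lb}$ with ${C_{p_\Lb}}_\ast(\Omega_{\Sh(\cat \rtimes \Lb,K_\Lb)}) \cong \Omega_{\Sh(\cat\rtimes\Lb,K_\Lb)}\circ t_\Lb \cong \Lb$, so $\Lb$ is an internal locale by \cref{localicmorph}. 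Conversely, for $\text{(i)}\Rightarrow\text{(iii)}$ I would use \cref{localicmorph} to present $\Lb$ as $\Sub_\ftopos(f^\ast\circ\yo_\cat(-))$ for a localic $f \colon \ftopos\to\topos$; the dense-subsite construction of \cref{subsec:topos_of_int_sh} then exhibits $K_\Lb$ as the restriction $\Tilde{J}_\can|_{\cat\rtimes\Lb}$ of a genuine Grothendieck topology to a dense subcategory, whose covering sieves are exactly those satisfying the combinatorial join condition, so $K_\Lb$ is a Grothendieck topology.

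It remains to connect these with the relative Beck--Chevalley condition. The key observation is that the sieve-assignment $K_\Lb$ always satisfies maximality and transitivity (as already noted), so the entire question of whether $K_\Lb$ is a Grothendieck topology reduces to the \emph{stability} (pullback) axiom. With that in hand, $\text{(iii)}\Rightarrow\text{(ii)}$ is immediate: the relative Beck--Chevalley condition is precisely the instance of stability applied to the cartesian morphisms $\bar a \colon (e,a^{-1}(V)) \to (d,V)$ lying over an arrow $a\colon e\to d$ of $\cat$, since the pullback sieve $\bar a^\ast(S)$ is exactly the sieve $h^\ast(S)$ on $(e,h^{-1}(V))$ appearing in the definition. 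The substantive direction is $\text{(ii)}\Rightarrow\text{(iii)}$, where I must upgrade stability against these maximal pullbacks to stability against \emph{all} morphisms of $\cat\rtimes\Lb$.

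The mechanism will be a factorisation argument: any morphism $\phi\colon (e,W)\to(d,V)$ of $\cat\rtimes\Lb$ over $a\colon e\to d$ (so $W\leqslant a^{-1}(V)$) factors as a vertical morphism followed by a cartesian one,
\[(e,W)\xrightarrow{\ \iota\ }(e,a^{-1}(V))\xrightarrow{\ \bar a\ }(d,V),\]
whence $\phi^\ast(S)=\iota^\ast\big(\bar a^\ast(S)\big)$ by contravariant functoriality of sieve pullback. The cartesian step $\bar a^\ast(S)$ covers $(e,a^{-1}(V))$ by hypothesis $\text{(ii)}$, so it suffices to establish stability for vertical morphisms $\iota\colon(e,W)\to(e,W')$ with $W\leqslant W'$ over $\id_e$. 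Given a $K_\Lb$-cover $S$ of $(e,W')$, i.e. $W'=\bigvee_{f\in S}\exists_{a_f}U_f$ with $f\colon(c_f,U_f)\to(e,W')$ over $a_f$, I would pull each $f$ back to $f'\colon(c_f,\,U_f\wedge a_f^{-1}(W))\to(e,W)$; each $f'$ lies in $\iota^\ast(S)$, and
\[\bigvee_{f}\exists_{a_f}\!\big(U_f\wedge a_f^{-1}(W)\big)=\bigvee_{f}\big(\exists_{a_f}U_f\wedge W\big)=\Big(\bigvee_{f}\exists_{a_f}U_f\Big)\wedge W=W'\wedge W=W,\]
where the first equality is the Frobenius condition (recall $\Lb$ lands in $\Frm_{\rmopen}$) and the second is frame distributivity. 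This shows $\iota^\ast(S)$ covers $(e,W)$, completing stability.

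The main obstacle is exactly this last reduction: verifying that checking the Beck--Chevalley join identity only along the maximal morphisms $\bar a$ is enough to recover full pullback-stability. Everything hinges on two facts that I expect to be routine once isolated, but which must be stated carefully, namely that morphisms in $\cat\rtimes\Lb$ admit a clean vertical/cartesian factorisation compatible with sieve pullback, and that the Frobenius condition together with frame distributivity is precisely what makes vertical pullbacks of covers again covers. I would take care to confirm that the cartesian lift $\bar a$ has fibre component the maximal element $a^{-1}(V)$ and that $\bar a^\ast(S)$ agrees on the nose with the sieve $h^\ast(S)$ in the statement of the relative Beck--Chevalley condition, so that no gap opens between the abstract stability axiom and its combinatorial formulation.
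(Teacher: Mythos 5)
The paper does not actually prove this theorem: it is stated as a recalled result of \cite{fibredsites}, with the surrounding text of \cref{noncartsiteintloc} to \cref{subsec:rel_BC} only assembling the ingredients, so there is no in-paper proof to compare yours against; judged on its own, your reconstruction is correct. Your two legs connecting (i) and (iii) are citation-driven in exactly the way the paper's exposition is: (iii) $\Rightarrow$ (i) uses that the faithful comorphism $p_\Lb$ induces a localic geometric morphism with ${C_{p_\Lb}}_\ast(\Omega_{\Sh(\cat\rtimes\Lb,K_\Lb)}) \cong \Lb$, together with Theorem \ref{localicmorph}, while (i) $\Rightarrow$ (iii) uses the dense-subsite construction of \cref{subsec:topos_of_int_sh}; note that in the latter the identification of the restricted topology $\Tilde{J}_\can|_{\cat\rtimes\Lb}$ with the combinatorially defined $K_\Lb$ is itself nontrivial content of \cite{fibredsites} which you, like the paper, take as given. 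The genuinely self-contained and valuable part of your proposal is the equivalence (ii) $\Leftrightarrow$ (iii): since $K_\Lb$ always satisfies maximality and transitivity, only stability is at issue; the relative Beck-Chevalley condition is verbatim stability along the cartesian lifts $(e,h^{-1}(V)) \to (d,V)$, since the sieve $h^\ast(S)$ of the definition is precisely the pullback sieve along that lift; and your factorisation of an arbitrary morphism of $\cat\rtimes\Lb$ as a vertical morphism over an identity followed by a cartesian lift, combined with the observation that stability along vertical morphisms holds \emph{unconditionally} for any functor into $\Frm_{\rmopen}$ (your displayed computation via the Frobenius condition and frame distributivity is correct, and the pulled-back arrows $f'$ do lie in $\iota^\ast(S)$ because sieves are closed under precomposition and the projection to $\cat$ is faithful, so the two candidate composites coincide), upgrades cartesian stability to full stability via $\phi^\ast(S)=\iota^\ast(\bar a^\ast(S))$. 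This cleanly isolates exactly where openness of the transition maps enters, and shows the relative Beck-Chevalley condition is both necessary and sufficient for $K_\Lb$ to be a Grothendieck topology — a proof the paper leaves entirely to the literature.
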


The classification of internal locales of $\sets^{\cat\op}$, when $\cat$ is cartesian can be recovered via the above classification by noting, as is done in \cite[Proposition 5.3]{fibredsites}, that the Beck-Chevalley and relative Beck-Chevalley conditions coincide when $\cat$ has all finite limits (in fact, a study of the proof of \cite[Proposition 5.3]{fibredsites} reveals that only pullbacks are necessary).

\begin{coro}[\cite{fibredsites}]
	Let $\cat$ be a category with all pullbacks.  A functor $\Lb \colon \cat\op \to \Frm_{\rmopen}$ satisfies the relative Beck-Chevalley condition, and thus defines an internal locale of $\sets^{\cat\op}$, if and only if $\Lb$ satisfies the Beck-Chevalley condition: for each pullback square 
	\[\begin{tikzcd}
		c \times_e d \ar{d}{k} \ar{r}{g} & d \ar{d}{h} \\
		c \ar{r}{f} & e
	\end{tikzcd}\]
	of $\cat$, the square
	\[\begin{tikzcd}
		\mathbb{L}_{c \times_e d} \ar{r}{\exists_g} & \mathbb{L}_d \\
		\mathbb{L}_c \ar{r}{\exists_f} \ar{u}{k^{-1}} & \mathbb{L}_e \ar{u}{h^{-1}}
	\end{tikzcd}\]
	commutes.
\end{coro}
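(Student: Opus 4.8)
The plan is to establish the two inequalities of the relative Beck-Chevalley condition separately, recognising that one of them is automatic while the other is exactly the content of the square-wise condition. Throughout I would use only the adjunction $\exists_{f} \dashv f^{-1}$ (so that $U \leqslant f^{-1}\exists_{f}U$ and $\exists_{f}f^{-1}V \leqslant V$), the functoriality $\exists_{f\circ p} = \exists_{f}\circ\exists_{p}$, and the fact that each $f^{-1}$, being a frame homomorphism, preserves arbitrary joins. I would observe first that for any $\Lb$, any arrow $h$, and any sieve $S$, the inequality $\bigvee_{g\in h^\ast(S)}\exists_{g}W \leqslant h^{-1}(V)$ holds for free: every arrow $g\colon (c,W)\to(e,h^{-1}(V))$ of $\cat\rtimes\Lb$ satisfies $W \leqslant g^{-1}h^{-1}(V)$, whence $\exists_{g}W \leqslant h^{-1}(V)$ by the counit. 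So in both directions only the reverse inequality is genuinely at stake.

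For the implication that the relative condition implies the square-wise one, I would feed the relative condition principal sieves. Fixing a pullback square as in the statement and $U\in\Lb_c$, I set $V := \exists_{f}(U)$ and let $S$ be the principal sieve on $(e,V)$ generated by $f\colon(c,U)\to(e,V)$, which is a legitimate arrow since $U\leqslant f^{-1}\exists_{f}U$. Using functoriality of $\exists_{(-)}$ and the adjunction, one checks that the join of $\exists_{\phi}$ applied to the domain components over $\phi\in S$ equals $\exists_{f}U = V$, so the hypothesis of the relative condition is met; applying it along $h$ yields $h^{-1}(V) = \bigvee_{\psi\in h^\ast(S)}\exists_{\psi}W_{\psi}$. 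The crucial step is then to analyse $h^\ast(S)$: by the universal property of the pullback, every $\psi\in h^\ast(S)$ factors through the arrow $g\colon (c\times_{e}d,\,k^{-1}U)\to(d,h^{-1}V)$, and this factorisation is itself a legitimate arrow of $\cat\rtimes\Lb$ precisely because its middle component satisfies the required inequality. Hence the join collapses to $\exists_{g}(k^{-1}U)$, giving $h^{-1}\exists_{f}U = \exists_{g}k^{-1}U$.

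Conversely, to derive the relative condition from the square-wise one, I would pull back the generating family term by term. Given $h\colon e\to d$ and a sieve $S$ on $(d,V)$ with $V = \bigvee_{f\in S}\exists_{f}U_{f}$, I form for each underlying arrow $f\colon c_{f}\to d$ in $S$ the pullback of $f$ along $h$, with projections $g_{f}$ and $k_{f}$. The square-wise condition gives $h^{-1}\exists_{f}U_{f} = \exists_{g_{f}}k_{f}^{-1}U_{f}$ for each $f$; taking the join over $f\in S$ and using that $h^{-1}$ preserves joins turns the left-hand side into $h^{-1}(V)$, so $h^{-1}(V) = \bigvee_{f\in S}\exists_{g_{f}}(k_{f}^{-1}U_{f})$. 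It remains to identify this with $\bigvee_{g\in h^\ast(S)}\exists_{g}W_{g}$: one checks that each $g_{f}\colon(c_{f}\times_{d}e,\,k_{f}^{-1}U_{f})\to(e,h^{-1}V)$ lies in $h^\ast(S)$, since $hg_{f}=fk_{f}$ factors through $f\in S$, so the displayed join is $\leqslant \bigvee_{g\in h^\ast(S)}\exists_{g}W_{g}$; combining with the automatic reverse inequality from the first paragraph closes the argument.

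The main obstacle, in both directions, is the bookkeeping with the pullback: verifying that the candidate middle components $k^{-1}U$ and $k_{f}^{-1}U_{f}$ satisfy the inequalities making the relevant projections genuine arrows of $\cat\rtimes\Lb$, and that the universal property produces factorisations living in $\cat\rtimes\Lb$ rather than merely in $\cat$. Once this is secured the two joins are forced to coincide. A secondary nuisance is the clash of names between the arrow $h$ of the square-wise statement (with codomain $e$) and the arrow $h$ of the relative condition (with domain $e$), which must be matched up correctly when specialising; I would resolve this by relabelling at the point of application.
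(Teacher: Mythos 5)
Your proof is correct. Both directions check out: the ``automatic'' inequality $\bigvee_{g \in h^\ast(S)} \exists_g W \leqslant h^{-1}(V)$ follows from the adjunctions exactly as you say; the principal sieve generated by $(c,U) \xrightarrow{f} (e, \exists_f U)$ satisfies the join hypothesis of the relative condition, and its pullback sieve collapses onto $\exists_g k^{-1}(U)$ because the universal property of $c \times_e d$ turns the inequality $W \leqslant p^{-1}(U)$ defining membership in the principal sieve into precisely the inequality $W \leqslant \theta^{-1} k^{-1}(U)$ needed for the factorisation to live in $\cat \rtimes \Lb$; and conversely, pulling back each element of $S$ along $h$ and using that $h g_f = f k_f$ lies in $S$ (by closure of the sieve under precomposition with $k_f$) identifies the two joins. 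One presentational point worth making explicit: the ``collapse'' in the first direction uses both halves, namely the factorisation gives $h^{-1}(V) \leqslant \exists_g k^{-1}(U)$ while the legitimacy of $g$ as an arrow into $(d, h^{-1}(V))$ gives the reverse inequality. As for comparison with the paper: there is nothing to compare against, since the paper states this corollary as a recollection and offers no proof at all, deferring to \cite[Proposition 5.3]{fibredsites} with only the parenthetical remark that an inspection of that proof shows pullbacks suffice in place of all finite limits. Your proposal therefore supplies a self-contained verification where the paper cites the literature, and it has the merit of displaying exactly where the pullback hypothesis enters: once to collapse $h^\ast(S)$ for a principal sieve, and once to transport an arbitrary covering sieve along $h$ element by element.
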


Thus, we are able to recover the characterisation for internal locales over a cartesian base category originally given in \cite[Proposition VI.2.2]{JT}.  We complete this discussion with some observations of the Grothendieck topology $K_\Lb$.

\begin{prop}[\cite{fibredsites}]\label{speciesofcov}
	Let $\Lb$ be an internal locale of $\sets^{\cat\op}$.  The Grothendieck topology $K_\Lb$ on $\cat \rtimes \Lb$ is generated by the following two species of covering families:
	\begin{enumerate}[label = (\Alph*)]
		\item\label{covspec1} $\left\{\,(c,U) \xrightarrow{f} (d,\exists_f U)\,\right\}$ for each arrow $c \xrightarrow{f} d$ of $\cat$ and $U \in \Lb_c$,
		\item\label{covspec2} $\lrset{(c,U_i) \xrightarrow{\id_c} \left(c,\bigvee_{i \in I} U_i \right)}{i \in I}$ for object $c$ of $\cat$ and family of opens $U_i \in \Lb_c$, for $i \in I$.
	\end{enumerate}
\end{prop}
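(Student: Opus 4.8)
The plan is to prove that $K_\Lb$ coincides with the Grothendieck topology $K'$ generated by the families of type (A) and (B), by establishing the two inclusions separately. The inclusion $K' \subseteq K_\Lb$ is the routine direction: since $\Lb$ is an internal locale, $K_\Lb$ is a genuine Grothendieck topology, so it suffices to check that each generating family is itself $K_\Lb$-covering; being a topology, $K_\Lb$ then contains everything generated by them. The reverse inclusion $K_\Lb \subseteq K'$ is the substantive part, and I would obtain it by exhibiting, for an arbitrary $K_\Lb$-covering sieve, a two-stage decomposition into a type-(B) cover followed by type-(A) covers.

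For the easy inclusion, recall that a sieve on $(d,V)$ is $K_\Lb$-covering exactly when $V = \bigvee \exists_f U$, the join taken over its members $f \colon (c,U) \to (d,V)$. For a family of type (B), namely $\{(c,U_i) \xrightarrow{\id_c} (c, \bigvee_i U_i)\}$, one has $\exists_{\id_c} U_i = U_i$, so the defining join is $\bigvee_i U_i$, as required. For a family of type (A), consisting of the single arrow $f \colon (c,U) \to (d,\exists_f U)$, every arrow of the generated sieve factors as $f \circ k$ with $\exists_k U' \leqslant U$, whence $\exists_{f \circ k} U' = \exists_f \exists_k U' \leqslant \exists_f U$; since $f$ itself attains $\exists_f U$, the join over the sieve equals $\exists_f U$. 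Thus both generating families are $K_\Lb$-covering and $K' \subseteq K_\Lb$.

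For the reverse inclusion, let $S$ be a $K_\Lb$-covering sieve on $(d,V)$, so that $V = \bigvee_{f \in S} \exists_f U_f$, where $f \colon (c_f, U_f) \to (d,V)$ ranges over the members of $S$. I would consider the type-(B) family $\{(d, \exists_f U_f) \xrightarrow{\id_d} (d,V)\}_{f \in S}$, which is $K'$-covering because $\bigvee_f \exists_f U_f = V$. For each index $f$, the same arrow $f$ defines a type-(A) cover $(c_f, U_f) \xrightarrow{f} (d, \exists_f U_f)$, and the sieve it generates is contained in the pullback $\id_d^\ast(S)$: any of its arrows has the form $f \circ k$ and satisfies $\id_d \circ (f \circ k) = f \circ k \in S$ since $f \in S$. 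Hence each $\id_d^\ast(S)$ is $K'$-covering. The key point is then to invoke the transitivity (local character) axiom, applied to the sieve generated by the type-(B) family: a sieve whose restriction along every arrow of a $K'$-covering sieve is $K'$-covering is itself $K'$-covering. I would note that it is enough to verify this condition on the generating arrows $\id_d$, since pullback-stability of $K'$ propagates the covering property to an arbitrary composite $\id_d \circ h$ via $(\id_d \circ h)^\ast(S) = h^\ast(\id_d^\ast(S))$. Therefore $S \in K'$, giving $K_\Lb \subseteq K'$ and hence equality.

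The main obstacle is packaging this decomposition correctly against the closure axioms of a generated topology: one must apply transitivity to the \emph{sieve generated by} the type-(B) family rather than to the family itself, and then argue that checking the restriction condition only on the generating arrows genuinely suffices. This is precisely where pullback-stability of $K'$ is used, and it constitutes the one piece of delicate bookkeeping in an otherwise direct argument; the conceptual content — splitting $V$ into the join of the pieces $\exists_f U_f$ and then realising each piece by the single arrow $f$ already present in $S$ — is straightforward.
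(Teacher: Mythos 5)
Your proof is correct and takes essentially the same route as the paper: both directions are the same, with the reverse inclusion obtained by factoring each arrow $f$ of a $K_\Lb$-covering sieve $S$ on $(d,V)$ through $(d,\exists_f U)$ as a type-(A) cover followed by a member of the type-(B) cover $\lrset{(d,\exists_f U) \xrightarrow{\id_d} (d,V)}{f \in S}$. The only difference is that you spell out the transitivity and pullback-stability bookkeeping which the paper compresses into a single sentence.
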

\begin{proof}
	We immediately have that both species are $K_\Lb$-covering.  For the converse, note that, given a $K_\Lb$-covering sieve $S$ on $(d,V)$, each morphism $(c,U) \xrightarrow{f} (d,V)$ of $S$ can be written as the composite
	\[\begin{tikzcd}
		(c,U) \ar{r}{f} & (d,\exists_f U) \ar{r}{\id_d} & \left(d,\bigvee_{f \in S} \exists_f U \right) = (d,V).
	\end{tikzcd}\]
	Hence, any Grothendieck topology $J$ for which both species \cref{covspec1} and \cref{covspec2} are $J$-covering contains the Grothendieck topology $K_\Lb$.
\end{proof}

\begin{rem}
	
	Let $\Lb$ be an internal locale of $\sets^{\cat\op}$.  We have refrained from naming the Grothendieck topology $K_\Lb$ the `canonical topology' to avoid confusion, despite it being a generalisation of the canonical topology on a locale.  Unlike a locale $L$ of $\sets$, the Grothendieck topology $K_\Lb$ is \emph{not} necessarily a subcanonical topology (defined on p.~126 of \cite[\S III.4]{SGL}).  Recall from \cite[p.~542-3, \S C1.2]{elephant} that a Grothendieck topology $J$ on a category $\dcat$ is subcanonical only if every $J$-covering sieve $S$ on an object $D$ is \emph{effective-epimorphic}, in the sense that $D$ is the colimit of the (potentially large) diagram
	\[\begin{tikzcd}
		S \ar[hook]{r} & \mathcal{D}/D \ar{r}{U} & \mathcal{D},
	\end{tikzcd}\]
	where $U \colon \mathcal{D}/D \to \dcat$ is the forgetful functor.  Observe, however, that the sieve generated by a $K_\Lb$-covering family $\left\{\,(c,U) \xrightarrow{f} (d,\exists_f U)\,\right\}$ of species \ref{covspec1} is not effective-epimorphic for any non-invertible arrow $f$ of $\cat$ since the colimit in $\cat \rtimes \Lb$ is given by $(c,U)$.
	
\end{rem}




\subsection{Internal locales of sheaf toposes}\label{sec:sheafintloc}

Let $(\cat,J)$ be a Grothendieck site.  The embedding $\Sh(\cat,J) \rightarrowtail \sets^{\cat\op}$ is a localic geometric morphism (see \cite[Example A4.6.2(a)]{elephant}), and thus, for any localic geometric morphism $\ftopos \to \Sh(\cat,J)$, the composite $\ftopos \to \Sh(\cat,J) \rightarrowtail \sets^{\cat\op}$ is still localic since localic geometric morphisms are closed under composition (see \cite[Lemma 1.1]{fact1}).  Therefore, our understanding of the internal locales of the presheaf topos $\sets^{\cat\op}$ can be leveraged to describe the internal locales of $\Sh(\cat,J)$. Similar characterisations are also found in \cite[Proposition 5.10]{fibredsites} and \cite[Corollary C1.6.10]{elephant}.

First, recall from \cite[\S III.3]{SGA4-3} that, given a functor $A \colon \dcat \to \cat$, there is a smallest topology $J_A$ on $\dcat$ making $A$ a comorphism of sites.  In \cite{SGA4-3}, the name \emph{`topologie induite'} was used.  The topology was subsequently dubbed the \emph{Giraud topology} in \cite{relsites} due to its pioneering use in \cite{giraud}.

\begin{lem}\label{intlocaleshf}
	Let $\Lb \colon \cat\op \to \Frm_{\rmopen}$ be a functor indexed over a category $\cat$ with a Grothendieck topology $J$.  The following are equivalent:
	\begin{enumerate}
		\item\label{shflem:1} $\Lb$ is an internal locale of $\Sh(\cat,J)$,
		\item\label{shflem:2} $\Lb$ is an internal locale of $\sets^{\cat\op}$ and a $J$-sheaf,
		\item\label{shflem:3} $K_\Lb$ is stable and contains the Giraud topology $J_{p_\Lb}$,
		\item\label{shflem:4} $K_\Lb$ is stable and there exists a factorisation
		\[\begin{tikzcd}
			& \Sh(\cat \rtimes \Lb,K_\Lb) \ar{d}{C_{p_\Lb}} \ar[dashed]{ld} \\
			\Sh(\cat,J) \ar[tail]{r} & \sets^{\cat\op}.
		\end{tikzcd}\]
	\end{enumerate} 
\end{lem}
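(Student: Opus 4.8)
My plan is to establish the cycle of implications \cref{shflem:1} $\Rightarrow$ \cref{shflem:2} $\Rightarrow$ \cref{shflem:3} $\Rightarrow$ \cref{shflem:4} $\Rightarrow$ \cref{shflem:1}. Throughout I would keep in mind that, because $K_\Lb$ always satisfies the maximality and transitivity conditions, the clause ``$K_\Lb$ is stable'' occurring in \cref{shflem:3} and \cref{shflem:4} is, by the classification of internal locales of $\sets^{\cat\op}$ recalled above, merely a reformulation of the statement that $\Lb$ is an internal locale of $\sets^{\cat\op}$. Consequently the only genuine content circulating around the cycle is the mutual relationship between the $J$-sheaf condition, the containment $K_\Lb \supseteq J_{p_\Lb}$, the existence of the factorisation, and localicness over $\Sh(\cat,J)$.

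For \cref{shflem:1} $\Rightarrow$ \cref{shflem:2} I would invoke \cref{localicmorph}: an internal locale of $\Sh(\cat,J)$ is realised as $f_\ast(\Omega_\ftopos)$ for some localic geometric morphism $f \colon \ftopos \to \Sh(\cat,J)$. Post-composing with the localic embedding $i \colon \Sh(\cat,J) \rightarrowtail \sets^{\cat\op}$ and using that localic morphisms are closed under composition, the composite $i \circ f$ is localic and $(i \circ f)_\ast(\Omega_\ftopos) \cong i_\ast(\Lb)$, namely $\Lb$ regarded as a presheaf; thus $\Lb$ is an internal locale of $\sets^{\cat\op}$, and being an object of $\Sh(\cat,J)$ it is in particular a $J$-sheaf.

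The conceptual core, and the step I expect to be the main obstacle, is \cref{shflem:2} $\Rightarrow$ \cref{shflem:3}, where the set-level sheaf condition must be converted into the frame-level covering identity defining $K_\Lb$. Here I would prove directly that $p_\Lb$ enjoys the $J$-cover-lifting property, which is exactly the assertion $K_\Lb \supseteq J_{p_\Lb}$. Given an object $(d,V)$ of $\cat \rtimes \Lb$ and a $J$-covering sieve $S$ on $d$, the obvious candidate lift is the sieve $R$ consisting of all arrows $(c,W) \xrightarrow{f} (d,V)$ with $f \in S$; since $p_\Lb(R) \subseteq S$, it remains only to show that $R$ is $K_\Lb$-covering, i.e.\ that $V = \bigvee_{f \in S} \exists_f f^{-1}(V)$. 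Writing $V'$ for the right-hand join, the inequality $V' \leqslant V$ is immediate from the counit $\exists_f f^{-1} \leqslant \id$; for the reverse, applying $g^{-1}$ for each $g \in S$ and combining the unit and counit of $\exists_g \dashv g^{-1}$ yields $g^{-1}(V') = g^{-1}(V)$, after which the separatedness of the $J$-sheaf $\Lb$ forces $V = V'$. It is precisely this last appeal to separatedness that transmutes the presheaf-level hypothesis into the covering identity.

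Finally, for \cref{shflem:3} $\Rightarrow$ \cref{shflem:4}, the containment $K_\Lb \supseteq J_{p_\Lb}$ states exactly that $p_\Lb$ is a comorphism of sites $(\cat \rtimes \Lb, K_\Lb) \to (\cat, J)$, which induces a geometric morphism $g \colon \Sh(\cat \rtimes \Lb, K_\Lb) \to \Sh(\cat,J)$; the required commuting triangle $i \circ g \cong C_{p_\Lb}$ then follows from the bifunctoriality of the comorphism construction with respect to the inclusion $J_{\rmtriv} \subseteq J$, a compatibility of exactly the kind supplied by \cref{fibrcomoprhmorphthm}. For the closing implication \cref{shflem:4} $\Rightarrow$ \cref{shflem:1}, I observe that since $C_{p_\Lb} = i \circ g$ is localic and every object of $\Sh(\Lb)$ is a subquotient of some $C_{p_\Lb}^\ast(E) = g^\ast(i^\ast E)$ with $i^\ast E \in \Sh(\cat,J)$, the morphism $g$ is itself localic; \cref{localicmorph} then exhibits $g_\ast(\Omega_{\Sh(\Lb)})$ as an internal locale of $\Sh(\cat,J)$, and the identity $i_\ast(g_\ast \Omega_{\Sh(\Lb)}) = {C_{p_\Lb}}_\ast(\Omega_{\Sh(\Lb)}) \cong \Lb$ computed above identifies this internal locale with $\Lb$, closing the cycle.
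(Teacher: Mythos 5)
Your proof is correct, but it follows a genuinely different route from the paper's. The paper handles the lemma almost entirely by citation: \cref{shflem:1}$\Leftrightarrow$\cref{shflem:2} is deduced from the fact that direct image functors of geometric morphisms preserve internal locales (\cite[\S C1.6]{elephant}), \cref{shflem:2}$\Leftrightarrow$\cref{shflem:3} is quoted from \cite[Proposition 5.10]{fibredsites}, and \cref{shflem:3}$\Leftrightarrow$\cref{shflem:4} is noted to hold by definition of the Giraud topology. You instead close the cycle \cref{shflem:1}$\Rightarrow$\cref{shflem:2}$\Rightarrow$\cref{shflem:3}$\Rightarrow$\cref{shflem:4}$\Rightarrow$\cref{shflem:1} using only facts the paper has already recalled: \cref{localicmorph}, the equivalence between stability of $K_\Lb$ and $\Lb$ being an internal locale of $\sets^{\cat\op}$, localicness of $C_{p_\Lb}$ for a faithful comorphism, closure of localic morphisms under composition, and pseudo-functoriality of $C_{(-)}$. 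The genuine added content is your direct argument for \cref{shflem:2}$\Rightarrow$\cref{shflem:3}: given a $J$-covering sieve $S$ on $d$ and $V \in \Lb_d$, the unit and counit of $\exists_g \dashv g^{-1}$ give $g^{-1}(V) = g^{-1}\bigl(\bigvee_{f \in S}\exists_f f^{-1}(V)\bigr)$ for every $g \in S$, so separatedness of $\Lb$ forces $V = \bigvee_{f\in S}\exists_f f^{-1}(V)$, which is exactly the cover-lifting property of $p_\Lb$ with respect to $J$; this replaces the paper's citation with an argument, and the cyclic structure spares you from ever proving the harder converse \cref{shflem:3}$\Rightarrow$\cref{shflem:2} directly, since it falls out of the loop. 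The paper's route buys brevity; yours buys self-containedness and makes visible that only separatedness, not the full sheaf condition, is needed at that step. One small repair: in \cref{shflem:3}$\Rightarrow$\cref{shflem:4} the appeal to \cref{fibrcomoprhmorphthm} is misplaced, since that lemma concerns squares mixing comorphisms with morphisms of sites; what you actually need is the bifunctoriality of ${\bf ComorphSites} \to \Topos$ applied to the composite of $p_\Lb \colon (\cat \rtimes \Lb, K_\Lb) \to (\cat,J)$ with the comorphism $\id_\cat \colon (\cat,J) \to (\cat,J_\rmtriv)$, together with the (standard, but worth stating) identification of $C_{\id_\cat}$ with the canonical inclusion $\Sh(\cat,J) \rightarrowtail \sets^{\cat\op}$.
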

The equivalence of statements \ref{shflem:1} and \ref{shflem:2} is a consequence of the fact that the direct image functor of any geometric morphism (in this case the inclusion $\Sh(\cat,J) \hookrightarrow \sets^{\cat\op}$) preserves internal locales (see p. 528 \cite[\S C1.6]{elephant}, c.f. \cite[Corollary C1.6.10]{elephant} as well).  The equivalence of \ref{shflem:2} and \ref{shflem:3} is proved in \cite[Proposition 5.10]{fibredsites} (cf. Remark 5.3(b) \cite{fibredsites} too).  The final equivalence of \ref{shflem:3} and \ref{shflem:4} follows by definition of the Giraud topology.


\section{Examples of internal locales}\label{sec:exs_of_intloc}

We now consider some examples of internal locales over non-cartesian base categories.

\subsection{Gluing internal locales}\label{sec:gluing}


What can prevent a functor $\Lb \colon \cat\op \to \Frm_{\rmopen}$ from being an internal locale of $\sets^{\cat\op}$?  What goes wrong when $K_\Lb$ is not stable?  We give an example of such a functor, over a category $\cat$ without all pullbacks, which is not an internal locale, despite $\Lb$ satisfying the Beck-Chevalley condition for those pullbacks in $\cat$ that do exist.  Inspired by this counterexample, we develop in Corollary \ref{gluing} a method for identifying the internal locales of the presheaf topos $\sets^{\dcat\op}$ when $\dcat$ is obtained by `gluing' certain constituent subcategories together.

\begin{ex}\label{nonexample}

	Let $L$ be any locale in $\sets$.  For any category $\cat$ with pullbacks, the constant functor $\Lb \colon \cat\op \to \Frm_{\rmopen}$ for $L$, i.e. $\Lb(c) = L$ and $\Lb(f) = \id_L$ for all objects $c$ and arrows $f$ of $\cat$, satisfies the Beck-Chevalley condition and so defines an internal locale of $\sets^{\cat\op}$.
	
	Now consider the category
	\[\begin{tikzcd}
		\bullet_1 \arrow[loop,  "\id_1 "', distance=2em, in=305, out=235] \ar{r}{f} & \bullet_2 \arrow[loop,  "\id_2 "', distance=2em, in=305, out=235] &  \ar{l}[']{g} \bullet_3 \arrow[loop,  "\id_3"', distance=2em, in=305, out=235]
	\end{tikzcd}\]
	with all arrows displayed (we will refer to it as $\bullet \rightarrow \bullet \leftarrow \bullet$), which clearly lacks a pullback for the diagram
	\[
	\begin{tikzcd}
		& \bullet_3 \ar{d}{g} \\
		\bullet_1 \ar{r}{f} & \bullet_2 .
	\end{tikzcd}
	\]
	The constant functor 
	\[\Lb \colon (\bullet \rightarrow \bullet \leftarrow \bullet)\op \to \Frm_{\rmopen}\]
	for a non-trivial locale $L$ is not an internal locale of $\sets^{(\bullet \rightarrow \bullet \leftarrow \bullet)\op}$.  We can observe that the relative Beck-Chevalley condition fails.  For instance, the set
	\[S= \lrset{ (\bullet_1,U) \xrightarrow{f} (\bullet_2,\top_{\bullet_2}) }{ U \in L }\]
	is a sieve of $(\bullet \rightarrow \bullet \leftarrow \bullet) \rtimes \Lb$ on $(\bullet_2,\top_{\bullet_2})$ for which $\top_{\bullet_2} = \bigvee_S \exists_f U$ but also $\top_{\bullet_3} \neq \bigvee{g^\ast(S)}$, as $g^\ast(S)$ is empty (here $\top_{\bullet_i}$ denotes the top element in $\Lb_{\bullet_i}$).

\end{ex}

The subobject classifier $\Omega_{\sets^{({\bullet \rightarrow \bullet \leftarrow \bullet})\op}}$ is, of course, an internal locale of the presheaf topos $\sets^{({\bullet \rightarrow \bullet \leftarrow \bullet})\op}$.  Recall (from \cite[\S I.4]{SGL} say) that the subobject classifier $\Omega_{\sets^{({\bullet \rightarrow \bullet \leftarrow \bullet})\op}}$, considered as a diagram of shape $\bullet \rightarrow \bullet \leftarrow \bullet$ in $\Loc_{\rmopen}$, is given by
\[\begin{tikzcd}
	\2 \ar[hook]{r}{i_1} & \2 + \2 & \ar[hook']{l}[']{i_2} \2,
\end{tikzcd}\]
where $\2$ denotes the 2 element locale (i.e. the terminal locale) and $\2 + \2$ is the coproduct in $\Loc$.  This is because there are two sieves, $\emptyset$ and $\{\,\id_1\,\}$, on $\bullet_1$, etc.  Observe that the arrows $i_1$ and $i_2$ are disjoint open embeddings of locales, by which we mean the following are satisfied, for all $V \in \2$:
\[i_1^{-1}\exists_{i_1} V= V, \ \ i_2^{-1} \exists_{i_2} V = \bot, \ \ i_2^{-1}\exists_{i_2} V = V, \ \ i_1^{-1}\exists_{i_2} V = \bot,\]
where $\bot$ represents the bottom element of $\2$.  We show that this property characterises the internal locales of $\sets^{({\bullet \rightarrow \bullet \leftarrow \bullet})\op}$.  We present this as a consequence of a wider theory regarding `gluing' internal locales together.

\begin{coro}\label{gluing}
	Let $\lrset{\cat_i}{i \in I}$ be a set of categories where, for each $i \in I$, $\cat_i$ has a terminal object $\1_i$.  Let $\dcat$ be the category obtained from the disjoint union $\coprod_{i \in I} \cat_i$ by freely adding a new terminal object $\1$.  For each $i \in I$, we denote by $f_i \colon \1_i \to \1$ the newly added morphism connecting the respective terminal objects.  A functor $\Lb \colon \dcat\op \to \Frm_{\rmopen}$ defines an internal locale of $\sets^{\dcat\op}$ if and only if
	\begin{enumerate}
		\item for all $i \in I$,
		\[ \Lb|_{\cat_i} \colon {\cat_i}\op \hookrightarrow \dcat\op \xrightarrow{\Lb} \Frm_{\rmopen}\]
		is an internal locale of $\sets^{\cat_i\op}$,
		\item and, for each pair $i, j \in I$ with $i \neq j$, the locale morphisms
		\[\begin{tikzcd}
			\Lb_{\1_i} \ar{r}{\Lb(f_i)} & \Lb_\1 & \ar{l}[']{\Lb(f_j)} \Lb_{\1_j}
		\end{tikzcd}\]
		are disjoint open embeddings of locales, by which we mean that, for all $V \in \Lb_{\1_i}$, $V' \in \Lb_{\1_j}$,
		\[f_i^{-1}\exists_{f_i} V= V, \ \ f_j^{-1} \exists_{f_i} V = \bot_i, \ \ f_j^{-1}\exists_{f_j} V' = V', \ \ f_i^{-1}\exists_{f_j} V' = \bot_i,\]
		where $\bot_i$ (respectively $\bot_j$) represents the bottom element of $\Lb_{\1_i}$ (resp. $\Lb_{\1_j}$).
	\end{enumerate} 
\end{coro}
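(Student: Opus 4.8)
The plan is to work entirely through the characterisation of internal locales of a presheaf topos by the relative Beck--Chevalley condition (equivalently, by $K_\Lb$ being a Grothendieck topology on $\dcat \rtimes \Lb$). The key structural observation about $\dcat$ is that the only arrows into an object of a component $\cat_i$ come from $\cat_i$ itself, and that the only arrows not already present in $\coprod_{i} \cat_i$ are those into $\1$, each of which factors as $c \xrightarrow{t} \1_i \xrightarrow{f_i} \1$ with $t$ the unique map to the terminal object of $\cat_i$. Since stability of covering sieves is preserved under composition of the arrows along which one pulls back (as $(h_1 \circ h_2)^\ast(S) = h_2^\ast(h_1^\ast(S))$), it suffices to verify the relative Beck--Chevalley equality separately for arrows lying in some $\cat_i$ and for the arrows $f_i \colon \1_i \to \1$ (the case $\id_\1$ being trivial).

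For the backward direction, assume (1) and (2). The relative Beck--Chevalley equality along an arrow of $\cat_i$ is exactly condition (1), since any covering sieve on an object $(d,V)$ with $d \in \cat_i$, together with its pullback, lives inside $\cat_i \rtimes \Lb|_{\cat_i}$. It remains to treat $h = f_j$. Given a covering sieve $S$ on $(\1,V)$, I would split it by the component containing the source of each arrow; writing $a = f_i \circ t_{c}$ and using that $\exists_a = \exists_{f_i} \circ \exists_{t_{c}}$ together with the fact that left adjoints preserve joins, this yields a decomposition $V = V_\1 \vee \bigvee_{i} \exists_{f_i}(V_i)$ with $V_\1 \in \Lb_\1$ and $V_i \in \Lb_{\1_i}$. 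Applying the frame homomorphism $f_j^{-1}$ and invoking the disjoint-open-embedding identities of (2) collapses the sum to $f_j^{-1}(V) = f_j^{-1}(V_\1) \vee V_j$. The remaining task is to identify this with $\bigvee_{g \in f_j^\ast(S)}\exists_g W$: the inequality $\leqslant$ is the counit $\exists_g g^{-1} \leqslant \id$, while $\geqslant$ follows by exhibiting inside $f_j^\ast(S)$ the arrows $t_{c} \colon (c,U) \to (\1_j, f_j^{-1}(V))$ coming from the $\cat_j$-part of $S$ (recovering $V_j$) and the identity arrows $(\1_j, f_j^{-1}(U_a)) \to (\1_j, f_j^{-1}(V))$ forced into the sieve by closure under precomposition with $f_j$ (recovering $f_j^{-1}(V_\1)$).

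For the forward direction, assume $\Lb$ is an internal locale. Condition (1) is immediate: for $d \in \cat_i$ the covering sieves on $(d,V)$ and their pullbacks along arrows of $\cat_i$ are computed identically in $\dcat \rtimes \Lb$ and in $\cat_i \rtimes \Lb|_{\cat_i}$, so the relative Beck--Chevalley condition descends to the restriction. For condition (2), I would feed the covering sieve of species \cref{covspec1} generated by $f_i \colon (\1_i, V) \to (\1, \exists_{f_i} V)$ (see \cref{speciesofcov}) into the relative Beck--Chevalley condition. Pulling back along $f_i$ itself, the pulled-back sieve is generated by $\id_{\1_i} \colon (\1_i, V) \to (\1_i, f_i^{-1}\exists_{f_i} V)$ and its join is $V$, giving $f_i^{-1}\exists_{f_i} V = V$; pulling back along $f_j$ for $j \neq i$, no arrow $c \to \1_j$ can compose with $f_j$ to land in $S$ (its source would have to lie in both $\cat_i$ and $\cat_j$), so the pulled-back sieve is empty and its join is $\bot$, giving $f_j^{-1}\exists_{f_i} V = \bot$. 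The symmetric identities follow by exchanging $i$ and $j$.

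The main obstacle I anticipate is the bookkeeping in the backward direction's treatment of $h = f_j$: correctly decomposing the covering sieve $S$ by component and then verifying that the pulled-back join reproduces exactly $f_j^{-1}(V_\1) \vee V_j$, where the contribution $f_j^{-1}(V_\1)$ arises only because sieve-closure forces the relevant identity arrows over $\1_j$ into $f_j^\ast(S)$. Everything else is either a direct appeal to condition (1), a one-line adjunction argument, or an emptiness observation; the disjoint-open-embedding hypotheses of (2) enter precisely, and only, at the step where $f_j^{-1}$ is applied to $\bigvee_i \exists_{f_i}(V_i)$.
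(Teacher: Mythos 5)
Your proof is correct, and its skeleton is the same as the paper's: both arguments reduce the relative Beck--Chevalley condition to (a) sieves and arrows lying entirely within a single $\cat_i$, where the first condition applies verbatim because no arrows enter $\cat_i$ from outside, and (b) pullback along the arrows $f_j$; and both prove the forward direction by feeding the principal species-\ref{covspec1} sieve generated by $(\1_i,U) \xrightarrow{f_i} (\1,\exists_{f_i}U)$ into the condition, pulling back along $f_i$ (recovering $f_i^{-1}\exists_{f_i}U = U$) and along $f_j$, $j \neq i$ (empty pullback, giving $\bot$). The genuine difference lies in the backward direction. The paper's proof asserts that any covering sieve $S$ on $(\1,V)$ ``can be rewritten'' as one generated by arrows of the form $(\1_i,U) \xrightarrow{f_i} (\1,V)$; read literally this is not quite right, because an arrow $(\1,W) \xrightarrow{\id_\1} (\1,V)$ of $S$ contributes $W$ to the covering join, and $W$ need not be a join of elements in the images of the maps $\exists_{f_i}$ (the maximal sieve already gives a counterexample whenever $V$ itself is not such a join), so the rewritten sieve can fail to cover. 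Your decomposition $V = V_\1 \vee \bigvee_{i}\exists_{f_i}(V_i)$, which keeps the contribution $V_\1$ of the arrows with domain over $\1$ explicit, together with your observation that closure of $S$ under precomposition with $f_j$ forces the identity arrows $(\1_j, f_j^{-1}(U_a)) \to (\1_j, f_j^{-1}(V))$ into $f_j^\ast(S)$, so that the pulled-back join is exactly $f_j^{-1}(V_\1) \vee V_j$, supplies precisely the bookkeeping that the paper's terse rewriting claim glosses over. In short: same strategy and same use of the disjoint-open-embedding identities, but your treatment of the $h = f_j$ case is the more rigorous one, at the cost of slightly heavier notation.
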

\begin{proof}
	For each object $(d,V)$ of $\dcat \rtimes \Lb$, with $d$ being an object of $\cat_j$ say, a sieve $S$ on $(d,V)$ consists only of morphisms contained in $\cat_j \rtimes \Lb|_{\cat_j} \subseteq \dcat \rtimes \Lb$, and any arrow $e \xrightarrow{h} d$ of $\dcat$ is also contained in the subcategory $\cat_j \subseteq \dcat$.  Therefore, we have that $h^{-1}(V) = \bigvee_{g \in h^\ast(S)} \exists_{g} U$ for each such $V$, $S$ and $h$ if and only if $\Lb|_{\cat_j}$ satisfies the relative Beck-Chevalley condition.  We can thus limit our attention to the second criterion of the corollary and sieves on objects of the form $(\1,V) \in \dcat \rtimes \Lb$.

	Suppose that $\Lb$ satisfies the relative Beck-Chevalley condition.  For each $i \in I$ and $U \in \Lb_{\1_i}$, the principle sieve $S$ generated by the arrow $(\1_i,U) \xrightarrow{f_i} (\1,\exists_{f_i} U)$ is $K_\Lb$-covering.  Therefore
	\[f_i^{-1} \exists_{f_i} U = \bigvee_{g \in f_i^\ast(S)} \exists_{g} W = U,\]
	and so $f_i$ is an open embedding.  For each $j \in I$ with $i \neq j$, we have that 
	\[f_j^{-1} \exists_{f_i} U = \bigvee_{g \in f_j^\ast(S)} \exists_{g} W,\]
	which, as $f_j^\ast(S)$ is empty, is equal to $\bot_i$ as required.
	
	Conversely, suppose that $\Lb|_{\cat_i}$ is an internal locale of $\sets^{{\cat_i}\op}$, for each $i \in I$, and that $\Lb(f_i)$ and $\Lb(f_j)$ are disjoint open embeddings for each pair $i, j \in I$ with $i \neq j$.  It remains to show that, if $S$ is a sieve on $(\1,V)$ for which $V = \bigvee_{g \in S} \exists_{g} U$, then 
	\[h^{-1}(V) = \bigvee_{g \in h^\ast(S)} \exists_{g'} U'\]
	for any arrow $e \xrightarrow{h} \1$ of $\dcat$.  It suffices to consider the case when $h = f_j \colon \1_j \to \1$, for some $j \in I$, and $S$ is generated by arrows of the form $(\1_i,U) \xrightarrow{f_i} (\1,V)$, as any arrow $h'$ can be factored as $e \rightarrow \1_j \xrightarrow{f_j} \1$ and any such sieve $S$ can be rewritten as $\lrset{(c,U) \xrightarrow{g} (\1_i, \exists_{g} U) \xrightarrow{f_i} (\1,V) }{f_i \in T}$ where $T$ generates a $K_\Lb$-covering sieve of the desired form.  But now the thesis follows since $\Lb(f_i)$ and $\Lb(f_j)$ are disjoint open embeddings for each pair $i, j \in I$ with $i \neq j$.
\end{proof}

\begin{ex}
	Using Corollary \ref{gluing}, we are instantly able to recognise that a functor
	\[\Lb \colon ({\bullet \rightarrow \bullet \leftarrow \bullet})\op \to \Frm_{\rmopen}\]
	defines an internal locale of the topos $\sets^{({\bullet \rightarrow \bullet \leftarrow \bullet})\op}$ if and only if the diagram in $\Loc$
	\[\begin{tikzcd}
		\Lb_{\bullet_1} \ar{r}{f} &  \Lb_{\bullet_2} & \ar{l}[']{g} \Lb_{\bullet_3}
	\end{tikzcd}\]
	is a pair of disjoint open embeddings, and thus confirm using Corollary \ref{gluing} that the constant functor $\Lb \colon ({\bullet \rightarrow \bullet \leftarrow \bullet})\op \to \Frm_{\rmopen}$ considered in Example \ref{nonexample} does not define an internal locale of  $\sets^{({\bullet \rightarrow \bullet \leftarrow \bullet})\op}$.
	
	More generally, if $\Gamma$ is a \emph{tree} (see \cite[p. 26]{order}), then the internal locales $\sets^\Gamma$ are equivalently functors
	\[
	\Lb \colon \Gamma\op \to \Loc
	\]
	where, for each $x \in \Gamma$, the locale morphisms $\Lb_y \to \Lb_x$ corresponding to the \emph{covers} of $x$ (in the sense of \cite[\S 1.14]{order}) are disjoint open embeddings.
\end{ex}




\subsection{Internal locales for monoid actions}

Although every topos has a site whose underlying category has pullbacks (e.g. the canonical site), there are many toposes which have a natural choice of site that lacks pullbacks.  The classification of internal locales given in \cref{sec:intloc} is most aptly applied when studying these toposes.  An important example of such a topos is the topos $\BG$ of representations of a group $G$ (on discrete sets).  This is the presheaf topos $\sets^{G\op}$, where the group $G$ is viewed as a one-object category.

Therefore, applying Theorem \ref{noncartsiteintloc}, we know that an internal locale of $\sets^{G\op}$ is a functor $\Lb \colon G\op \to \Frm_{\rmopen}$ satisfying the relative Beck-Chevalley condition.  But it is easily calculated that any action by $G$ on a locale $L$ by homeomorphisms, i.e. a group homomorphism $G \to \Aut_\Loc(L)$, yields a functor $\Lb \colon G\op \to \Frm_{\rmopen}$ that satisfies the relative Beck-Chevalley condition (this can be deduced as a corollary of the result for monoids below).  Thus, by purely computational means we have recovered the correspondence between internal locales of $\sets^{G\op}$ and $G$-actions on locales that was also observed in \cite[Example C2.5.8(d)]{elephant}.

Conversely, for a monoid $M$, it is not true that any action by $M$ on a locale $L$, i.e. a monoid homomorphism $M \to \End_\Loc(L)$, yields an internal locale of the topos of $M$-sets $\sets^{M\op}$.  Nor will it suffice to restrict to \emph{open} actions, those homomorphism that factor as $M \to \End_{\Loc_{\rmopen}}(L) \subseteq \End_\Loc(L)$.  Instead, an internal locale of $\sets^{M\op}$ must interact stably with respect to the set of \emph{divisors} $\lrset{k \in M}{nk = m}$, for $n , m \in M$, as described below.

\begin{prop}
	Let $M$ be a monoid.  An {open} action of $M$ on a locale $L$ constitutes an internal locale of $\sets^{M\op}$ if and only if, for each $U \in L$ and each pair $n, m \in M$,
	\[
	n^{-1}(\exists_m U) = \bigvee_{\substack{k \in M \\ nk = m}} \exists_k U.
	\]
\end{prop}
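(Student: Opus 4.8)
The plan is to reduce the statement to the characterisation of internal locales by the relative Beck--Chevalley condition (equivalently, by $K_\Lb$ being a Grothendieck topology on $\cat \rtimes \Lb$) and then to evaluate that condition in the one-object category $\cat = M$. Here the Grothendieck construction $M \rtimes \Lb$ has the elements $U \in L$ as its objects, with an arrow $k \colon U \to V$ for each $k \in M$ such that $U \leqslant k^{-1}(V)$; openness of the action is exactly what guarantees that each transition map $k^{-1}$ has a left adjoint $\exists_k$, so that the displayed formula is even meaningful. First I would invoke \cref{speciesofcov}: the topology $K_\Lb$ is generated by the single-arrow covers of species \ref{covspec1} and the join covers of species \ref{covspec2}. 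Since a Grothendieck topology is determined by the stability of any generating family of covers, it suffices to test stability (i.e. the relative Beck--Chevalley condition) on these two species.

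The join covers $\bigl\{\,U_i \xrightarrow{\id} \bigvee_i U_i\,\bigr\}$ of species \ref{covspec2} are automatically stable, because every transition map $k^{-1}$ and every left adjoint $\exists_k$ preserves arbitrary joins; pulling one back along any arrow of $M \rtimes \Lb$ again yields a join cover. Hence all the content resides in species \ref{covspec1}. Fix such a cover: the principal sieve $S$ on $(\,*,\exists_m U\,)$ generated by the single arrow $m \colon U \to \exists_m U$. For an arbitrary arrow of $M$, which by the definition of the relative Beck--Chevalley condition I may take to be $n \colon (\,*, n^{-1}(\exists_m U)\,) \to (\,*,\exists_m U\,)$, I would compute the pullback sieve $n^\ast(S)$. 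An arrow $k \colon W \to n^{-1}(\exists_m U)$ lies in $n^\ast(S)$ precisely when the composite $nk \colon W \to \exists_m U$ factors through the generator $m$; tracing this factorisation shows that the arrows $k \colon U \to n^{-1}(\exists_m U)$ with $nk = m$ --- the \emph{divisors} of $m$ through $n$ --- lie in $n^\ast(S)$ and contribute the terms $\exists_k U$ to the covering join. Writing out the covering equation $n^{-1}(\exists_m U) = \bigvee_{g \in n^\ast(S)} \exists_g(\mathrm{dom}\,g)$ for this sieve then produces the displayed identity, and conversely positing the identity for all $n,m,U$ reinstates stability of every species \ref{covspec1} cover and so makes $K_\Lb$ a Grothendieck topology.

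The main obstacle is exactly this pullback-sieve computation. The inclusion $\bigvee_{nk=m}\exists_k U \leqslant n^{-1}(\exists_m U)$ is immediate, since $\exists_k U \leqslant n^{-1}(\exists_m U)$ is equivalent by adjunction to $U \leqslant (nk)^{-1}(\exists_m U) = m^{-1}(\exists_m U)$, which holds by the unit of $\exists_m \dashv m^{-1}$. The reverse inclusion $n^{-1}(\exists_m U) \leqslant \bigvee_{nk=m}\exists_k U$ --- that the covering join of the full sieve $n^\ast(S)$ collapses onto the divisor terms, rather than picking up spurious contributions $\exists_k(\ell^{-1}U)$ from composites with $nk = m\ell$ forced in by downward closure --- is the delicate heart of the argument, and it is here that the Frobenius condition for the open action together with the adjunctions $\exists_k \dashv k^{-1}$ must be deployed with care. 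I would establish these two inequalities separately. Once the reduction is secured, the equivalence with being an internal locale follows from the cited characterisation, and both the group case (where every open action works, as $\{\,k : nk = m\,\}$ is then the singleton given by the unique solution of $nk = m$) and the disjointness phenomena of \cref{gluing} fall out as instances.
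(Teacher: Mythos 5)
Your overall route is the same as the paper's: reduce to the relative Beck--Chevalley condition, observe that the species \ref{covspec2} covers are stable, and pull back the principal sieve $R$ generated by a species \ref{covspec1} arrow $(\ast,U)\xrightarrow{m}(\ast,\exists_m U)$; your proof of the easy inclusion $\bigvee_{nk=m}\exists_k U \leqslant n^{-1}(\exists_m U)$ is correct (and needs no Beck--Chevalley hypothesis at all). The genuine gap is that the proposal stops at exactly the step that carries all the content: the reverse inclusion $n^{-1}(\exists_m U)\leqslant \bigvee_{nk=m}\exists_k U$ is labelled ``the delicate heart of the argument'' and postponed to an unspecified careful use of Frobenius and the adjunctions, with no mechanism offered for absorbing the terms $\exists_k(\ell^{-1}U)$ coming from factorisations $nk=m\ell$ into the divisor terms. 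Naming the hard step is not proving it; as written, the forward implication is simply absent.

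Moreover, that step cannot be filled in, because the spurious contributions you flagged are a genuine obstruction: the forward implication, and hence the proposition as stated, is false. Let $M=(\mathbb{N},+)$ act trivially (every element acting as the identity) on a non-trivial locale $L$. This is an open action, and it satisfies the relative Beck--Chevalley condition: if $S$ is a sieve on $(\ast,V)$ with $V=\bigvee_{(k,W)\in S}W$, then closure of $S$ under precomposition and commutativity of $\mathbb{N}$ give $(n+k,W)\in S$ whenever $(k,W)\in S$, so $n^\ast(S)\supseteq S$ and its join is again $V$. (Alternatively: $\mathbb{N}$, as a one-object category, has all pullbacks --- the pullback of the cospan $(n,m)$ with $n\geqslant m$ is the pair $(0,\,n-m)$ --- so Example \ref{nonexample} already exhibits this constant functor as an internal locale of $\sets^{\mathbb{N}\op}$, using the Corollary recalled in \cref{subsec:rel_BC}.) Yet take $n=1$ and $m$ the unit of $M$: the left-hand side of the displayed identity is $1^{-1}(\exists_m U)=U$, while $\lrset{k\in\mathbb{N}}{1+k=0}$ is empty, so the right-hand side is the empty join $\bot$. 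For what it is worth, the paper's own proof founders at precisely the point you isolated: it computes $n^\ast(R)$ as the set of arrows $k$ with $nk=m$ on the nose, silently discarding the composites with $nk=m\ell$ forced in by downward closure of the generated sieve --- exactly the error your ``delicate heart'' warns against. A correct stability condition for species \ref{covspec1} covers must quantify over all factorisations $nk=m\ell$ (with domains bounded by $\ell^{-1}(U)$), and under that corrected condition the example above ceases to be a counterexample; but the divisor formula as displayed is strictly stronger, and no proof of the stated equivalence --- yours or the paper's --- can be completed.
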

\begin{proof}
	We must show that the functor $\Lb \colon M\op \to \Frm_{\rmopen}$ induced by the open action of $M$ on $L$ satisfies the relative Beck-Chevalley condition if and only if, for each $U \in L$ and each pair $n, m \in M$,
	\[
	n^{-1}(\exists_m U) = \bigvee_{\substack{k \in M \\ nk = m}} \exists_k U.
	\]

	Assuming the relative Beck-Chevalley condition, the $K_\Lb$-covering sieve $R$ generated by the single arrow $(\ast, U) \xrightarrow{m} (\ast, \exists_m U)$ must be stable under the map $(\ast, n^{-1} \exists_m U) \xrightarrow{n} (\ast, U)$.  We readily calculate that 
	\[n^\ast(R) = \lrset{(\ast, V) \xrightarrow{k} (\ast, n^{-1} \exists_m U)}{nk = m \text{ and } V \leqslant k^{-1}n^{-1} \exists_m U}.\]
	Hence, we have that
	\[
	n^{-1}(\exists_m U) = \bigvee_{k \in n^\ast(R)} \exists_k V.
	\]
	By the inequality
	\[ V \leqslant  k^{-1}n^{-1} \exists_m U =  k^{-1}n^{-1} \exists_n \exists_k U \leqslant U , \]
	we deduce that $\exists_k V \leqslant \exists_k U$.  Simultaneously, the equality $\exists_n \exists_k U  = \exists_k U$ implies that $ \exists_k U \leqslant n^{-1}( \exists_m U)$.  Combining the two inequalities, we conclude that
	\[
	n^{-1}(\exists_m U) = \bigvee_{k \in n^\ast(R)} \exists_k V = \bigvee_{\substack{k \in M \\ nk = m}} \exists_k U
	\]
	as required.

	For the converse, let $S$ be a sieve on $M \rtimes \Lb$ on $(\ast,V)$ for which $V = \bigvee_{m \in S} \exists_f U$.  Then
	\begin{align*}
		n^{-1}(V) & = \bigvee_{m \in S} n^{-1} \exists_m U, \\
		& = \bigvee_{m \in S} \bigvee_{\substack{k \in M \\ nk = m}} \exists_k U .
	\end{align*}
	We need only finally note that $n^\ast(S) = \lrset{(\ast,U) \xrightarrow{k} (\ast, V)}{\exists m \in S, nk = m}$.
\end{proof}



\section{Internal locale morphisms}\label{intlocmorphsec}

In this section we begin our study the morphisms of internal locales and their properties.  We aim to provide a parallel to the treatment of locale morphisms and the geometric morphisms between localic toposes that is found in \cite[\S IX]{SGL}.  Therein it is shown that, given two locales $X, \, Y$ (of $\sets$), there is an equivalence
\begin{equation}\label{eq:SGLlocandgeo}
	\Loc(X,Y) \simeq \Geom(\Sh(X),\Sh(Y))
\end{equation}
between the category of locale morphisms $X \to Y$ and the category of geometric morphisms ${\Sh(X) \to \Sh(Y)}$.  The morphisms of internal locales (over a cartesian base category) were first characterised in \cite[\S VI.2]{JT}.

\begin{df}[\cite{JT}]
	An \emph{internal locale morphism} $\fb \colon \Lb_1 \to \Lb_2$, between internal locales $\Lb_1,\Lb_2\colon \cat\op \to \Frm_{\rmopen}$ of the topos $\Sh(\cat,J)$, is a natural transformation $\fb^{-1} \colon \Lb_2 \to \Lb_1$ such that, for each object $c$ of $\cat$, $\fb^{-1}_c \colon \Lb_2(c)\to \Lb_1(c)$ is a frame homomorphism and, for each morphism $c \xrightarrow{g} d$ of $\cat$, the diagram
	\[\begin{tikzcd}
		\Lb_2(d) \ar{d}{\fb_d^{-1}} \ar[shift right]{r}[']{\Lb_2(g)} & \Lb_2(c) \ar[shift right]{l}[']{\exists_{\Lb_2(g)}} \ar{d}{\fb_c^{-1}} \\
		\Lb_1(d)\ar[shift right]{r}[']{\Lb_1(g)} & \Lb_1(c) \ar[shift right]{l}[']{\exists_{\Lb_1(g)}}
	\end{tikzcd}\]
	is a \emph{morphism of adjunctions}: that is, $\Lb_1(g) \circ \fb_d^{-1} = \fb_c^{-1} \circ \Lb_2(g)$ and $\exists_{\Lb_1(g)} \circ \fb_c^{-1} = \fb_d^{-1}\circ \exists_{\Lb_2(g)}$.
\end{df}

Being a natural transformation, $\fb^{-1} \colon \Lb_2 \to \Lb_1$ induces a morphism of fibrations
\[
\begin{tikzcd}
	\cat \rtimes \Lb_2 \ar{rr}{\id_\cat \rtimes \fb^{-1}} \ar{rd}[']{p_{\Lb_2}} && \cat \rtimes \Lb_1  \ar{ld}{p_{\Lb_1}} \\
	& \cat. &
\end{tikzcd}
\]
The functor $\id_\cat \rtimes \fb^{-1}$ acts on objects by $(c,U) \mapsto (c,\fb_c^{-1}(U))$.  For notational convenience, we denote the functor $\id_\cat \rtimes \fb^{-1}$ by $\breve{\fb}$.

Our first task is to extend the equivalence \cref{eq:SGLlocandgeo} between internal locale morphisms and geometric morphisms for set-based locales to the internal setting.  This has been demonstrated concretely in \cite[\S 4]{fibredsites}.  Our method here will differ slightly from that of \cite{fibredsites} as we never leave our site of definition $(\cat,J)$.  Instead, over the next few results we will construct a bijective correspondence between:
\begin{enumerate}
	\item the internal locale morphisms $\fb \colon \Lb_1 \to \Lb_2$,
	\item the morphisms of fibrations 
	\[
	\begin{tikzcd}
		\cat \rtimes \Lb_2  \ar{rd}[']{p_{\Lb_2}} \ar{rr}{\breve{\fb}} && 	\cat \rtimes \Lb_1  \ar{ld}{p_{\Lb_1}}\\
		& \cat &
	\end{tikzcd}
	\]
	which yield morphisms of sites
	\[
	\breve{\fb} \colon (\cat \rtimes \Lb_1, K_{\Lb_1}) \to (\cat \rtimes \Lb_2, K_{\Lb_2}),
	\]
	\item and also the geometric morphisms $f \colon \Sh(\Lb_1) \to \Sh(\Lb_2)$ for which the triangle
	\begin{equation}\label{diag:triangleyeah}\begin{tikzcd}[column sep = tiny]
			\Sh({\Lb_1}) \ar{rr}{f} \ar{rd}[']{{C_{p_{\Lb_1}}}} && \Sh({\Lb_2}) \ar{ld}{{C_{p_{\Lb_2}}}} \\
			& \Sh(\cat,J)&
	\end{tikzcd}\end{equation}
	commutes.
\end{enumerate}

Thereby, we will recover the biequivalence
\begin{equation}\label{equivpeekpreview}
	\Loc\left(\Sh(\cat,J)\right) \simeq \mathfrak{Loc}/\Sh(\cat,J).
\end{equation}
(as seen in \cite[Corollary 3.5]{fibredsites}).  Here $\Loc\left(\Sh(\cat,J)\right)$ denotes the bicategory of internal locales of $\Sh(\cat,J)$, their internal locale morphisms and natural transformations between these.  By $\mathfrak{Loc}/\Sh(\cat,J)$ we denote the bicategory whose objects are localic geometric morphisms $f \colon \topos \to \Sh(\cat,J)$, whose 1-cells are commuting geometric morphisms
\[\begin{tikzcd}[column sep = tiny]
	\topos \ar{rr}{g} \ar{dr}[']{f} && \topos' \ar{ld}{f'} \\
	& \Sh(\cat,J) ,&
\end{tikzcd}\]
(the geometric morphism $g$ is also localic by \cite[Lemma 1.1(ii)]{fact1}) and whose 2-cells are natural transformations between these.

Finally, having related internal locale morphisms and geometric morphisms, we turn to a study of their properties.  In \cref{subsec:surj}, we will extend, to the to internal setting, the result \cite[Proposition IX.5.5(i)]{SGL}, which states that a locale morphism $f \colon L \to K$ is an surjective locale morphism if and only if the induced geometric morphism $\Sh(f) \colon \Sh(L) \to \Sh(K)$ between localic toposes is surjective.


\subsection{Internal locale morphisms and geometric morphisms}\label{subsec:intmorphgeomorph}

We first demonstrate two constructions: that each morphism of internal locales induces a geometric morphism that makes the triangle \cref{diag:triangleyeah} commute, and, vice versa, each geometric morphism as in \cref{diag:triangleyeah} induces a morphism of internal locales.  Using this, we then demonstrate the biequivalence \cref{equivpeekpreview}.

\begin{prop}\label{morphinducesgeo}
	Let $\Lb_1, \Lb_2 \colon \cat\op \to \Frm_{\rmopen}$ be internal locales of $\Sh(\cat,J)$.  For each internal locale morphism $\fb \colon \Lb_1 \to \Lb_2$, the morphism of fibrations
	\[
	\begin{tikzcd}
		\cat \rtimes \Lb_2 \ar{rd}[']{p_{\Lb_2}} \ar{rr}{\breve{\fb}} && 	\cat \rtimes \Lb_1  \ar{ld}{p_{\Lb_1}}\\
		& \cat &
	\end{tikzcd}
	\]
	induces a morphism of sites 
	\[
	(\cat \rtimes \Lb_2 , K_{\Lb_2}) \xrightarrow{\breve{\fb}}	(\cat \rtimes \Lb_1 , K_{\Lb_1}).
	\]
	Hence the induced geometric morphism $\Sh(\breve{\fb})$ makes the triangle
	\begin{equation}\label{triangle}\begin{tikzcd}[column sep = tiny]
			\Sh(\cat \rtimes \Lb_1,K_{\Lb_1}) \ar{rr}{\Sh(\breve{\fb})} \ar{rd}[']{C_{p_{\Lb_1}}} && \Sh(\cat\rtimes \Lb_2, K_{\Lb_2}) \ar{ld}{C_{p_{\Lb_2}}} \\
			& \Sh(\cat,J)&
	\end{tikzcd}\end{equation}
	commute.
\end{prop}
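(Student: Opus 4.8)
The plan is to verify directly that $\breve{\fb}$ satisfies the four conditions of \cref{df:morphsites}, and then to read off the commutativity of \cref{triangle} from \cref{fibrcomoprhmorphthm}. The only genuinely content-bearing condition is cover preservation, \cref{df:morphsites:covpreserv}, and I expect this to be the crux of the argument, since it is exactly here that both halves of the definition of an internal locale morphism are used. Given a $K_{\Lb_2}$-covering sieve $S$ on $(d,V)$, so that $V = \bigvee_{g \in S} \exists_g U$ (the join running in $\Lb_2$, with $U$ the open-component of the domain of each $g \colon (c,U) \to (d,V)$), I would apply $\fb_d^{-1}$ and use that $\fb_d^{-1}$ preserves arbitrary joins (being a frame homomorphism) together with the second half of the morphism-of-adjunctions identity, $\fb_d^{-1}\circ\exists_{\Lb_2(g)} = \exists_{\Lb_1(g)}\circ\fb_c^{-1}$, to obtain
\[
\fb_d^{-1}(V) = \bigvee_{g \in S} \exists_g\bigl(\fb_c^{-1}(U)\bigr).
\]
This says precisely that $\breve{\fb}(S)$ is $K_{\Lb_1}$-covering on $\breve{\fb}(d,V) = (d,\fb_d^{-1}(V))$; equivalently one may check preservation on the two generating species of \cref{speciesofcov}, species \cref{covspec1} being preserved by the morphism-of-adjunctions condition and species \cref{covspec2} by join-preservation.

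The covering-flatness conditions \cref{df:morphsites:terminal}--\cref{df:morphsites:equalizer} are comparatively soft, because $\breve{\fb}$ fixes the $\cat$-component, each fibre is a frame, $\fb_c^{-1}$ preserves finite meets and the top element, and the fibrations $p_{\Lb_i}$ are faithful. For \cref{df:morphsites:terminal}, every $(d,W)$ carries a morphism $(d,W) \to (d,\top) = \breve{\fb}(d,\top)$ over $\id_d$, so the maximal sieve witnesses the condition. For \cref{df:morphsites:products}, given $g_i \colon (d,W) \to \breve{\fb}(c_i,V_i)$ with base components $\gamma_i$, naturality of $\fb^{-1}$ rewrites the defining inequalities as $W \leqslant \fb_d^{-1}(A_i)$ where $A_i := \gamma_i^{-1}(V_i) \in \Lb_2(d)$; the object $(d, A_1 \wedge A_2)$ of $\cat \rtimes \Lb_2$ then maps to both $(c_1,V_1)$ and $(c_2,V_2)$, and since $\fb_d^{-1}$ preserves binary meets, $W \leqslant \fb_d^{-1}(A_1 \wedge A_2)$ supplies the factorisation over the maximal cover. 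Condition \cref{df:morphsites:equalizer} is handled in the same way, using the object $(d,\gamma^{-1}(V'))$ built from the base component $\gamma$ of $g$. In each case the commutativity of the required squares is automatic, as $p_{\Lb_1}$ is faithful and both legs share the same $\cat$-component.

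With $\breve{\fb}$ confirmed to be a morphism of sites, the triangle \cref{triangle} follows from \cref{fibrcomoprhmorphthm} applied to the square
\[
\begin{tikzcd}
	\cat \rtimes \Lb_2 \ar{r}{\breve{\fb}} \ar{d}{p_{\Lb_2}} & \cat \rtimes \Lb_1 \ar{d}{p_{\Lb_1}} \\
	\cat \ar{r}{\id_\cat} & \cat,
\end{tikzcd}
\]
in which the vertical functors are the faithful fibrations $p_{\Lb_i}$, which are comorphisms of sites $(\cat \rtimes \Lb_i, K_{\Lb_i}) \to (\cat,J)$ because each $\Lb_i$ is an internal locale of $\Sh(\cat,J)$, so that $K_{\Lb_i}$ contains the Giraud topology $J_{p_{\Lb_i}}$ by \cref{intlocaleshf}; the bottom $\id_\cat$ is trivially a morphism of sites; and $(\breve{\fb},\id_\cat)$ is a morphism of fibrations with the square commuting strictly. \cref{fibrcomoprhmorphthm} then yields $C_{p_{\Lb_2}} \circ \Sh(\breve{\fb}) \cong \Sh(\id_\cat) \circ C_{p_{\Lb_1}} \cong C_{p_{\Lb_1}}$, which is exactly the asserted commutativity of \cref{triangle}.
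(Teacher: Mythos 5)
Your proposal is correct and takes essentially the same route as the paper's own proof: it verifies the four conditions of Definition~\ref{df:morphsites} for $\breve{\fb}$ with the same witnesses (the top element, the meet $\gamma_1^{-1}(V_1)\wedge\gamma_2^{-1}(V_2)$, and the object $(d,\gamma^{-1}(U'))$, using faithfulness of $p_{\Lb_1}$), and then invokes Lemma~\ref{fibrcomoprhmorphthm} for the commutativity of the triangle. The only cosmetic differences are that the paper checks cover preservation on the two generating species of Proposition~\ref{speciesofcov} while you compute directly on an arbitrary covering sieve (mentioning the species argument as an alternative), and that you make explicit, via Lemma~\ref{intlocaleshf}, why the projections $p_{\Lb_i}$ are comorphisms of sites into $(\cat,J)$ --- a point the paper leaves implicit.
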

\begin{proof}
	We check that the functor $\breve{\fb} \colon \cat \rtimes \Lb_2 \to \cat \rtimes \Lb_1$ defines a morphism of sites by checking that the four conditions explicated in Definition \ref{df:morphsites} are satisfied.
	\begin{enumerate}
		\item To show that $\breve{\fb}$ is cover preserving, it suffices to show that the two generating species of $K_{\Lb_2}$-covering families identified in Proposition \ref{speciesofcov} are sent by $\breve{\fb}$ to $K_{\Lb_1}$-covering families.  Let 
		\[\left\{\, (c,U) \xrightarrow{g} (c,\exists_{\Lb_2(g)} U) \,\right\}\]
		be a $K_{\Lb_2}$-covering family of species \ref{covspec1}.  The family 
		\[ \breve{\fb}\left(\left\{\,  (c,U) \xrightarrow{g} (c,\exists_{\Lb_2(g)} U) \,\right\}\right) = \left\{\, (c,\fb_c^{-1}(U)) \xrightarrow{g} (c,\fb_d^{-1}(\exists_{\Lb_2(g)} U)) \,\right\}\]
		is $K_{\Lb_1}$-covering as $\fb_d^{-1}(\exists_{\Lb_2(g)} U) =\exists_{\Lb_1(g)} \fb_c^{-1}(U)$.  Let 
		\[\lrset{ (c,U_i) \xrightarrow{\id_c} (c,\bigvee_{i \in I} U_i)}{i \in I}\]
		be a $K_{\Lb_2}$-covering family of species \ref{covspec2}.  The family
		\[\hskip-12pt \breve{\fb}\left(\lrset{ (c,U_i) \xrightarrow{\id_c} \left(c,\bigvee_{i \in I} U_i\right) }{ i \in I}\right) = \lrset{ (c,\fb_c^{-1}(U_i)) \xrightarrow{\id_c} \left(c,\fb_c^{-1}\left(\bigvee_{i \in I} U_i\right)\right) }{i \in I}\]
		is $K_{\Lb_1}$-covering since $\fb_c^{-1}$ is a frame homomorphism.

		\item Each object $(c,U)$ of $\cat \rtimes \Lb_1$ has an arrow 
		\[ (c,U) \xrightarrow{\id_c} (c,\top) = (c,\fb_c^{-1}(\top)),\]
		and so $\breve{\fb}$ relatively preserves the terminal object.
		
		\item Next, we show that binary products are relatively preserved.  Given a pair of arrows 
		\[g_1 \colon (d,V) \to (c_1,\fb_{c_1}^{-1}(U_1)), \ g_2 \colon (d,V) \to (c_2,\fb_{c_2}^{-1}(U_2))\]
		of $\cat \rtimes \Lb_2$, we have that 
		\begin{align*}
			V & \leqslant \Lb_2(g)(\fb_{c_1}^{-1}(U_1)) \land \Lb_2(g)(\fb_{c_2}^{-1}(U_2)), \\
			&= \fb_{d}^{-1}(\Lb_1(g)(U_1) \land\Lb_1(g)(U_2)).
		\end{align*}
		Hence, there are the commutative triangles
		\[\begin{tikzcd}[column sep = tiny]
			& (d,V) \ar{ld}[']{\id_d} \ar{rd}{g_1} &\\
			(d,\fb_{d}^{-1}(\Lb_1(g)(U_1) \land\Lb_1(g)(U_2))) \ar{rr}{g_1} && (c_1,\fb_{c_1}^{-1}(U_1)) , \\
			& (d,V) \ar{ld}[']{\id_d} \ar{rd}{g_2} &\\
			(d,\fb_{d}^{-1}(\Lb_1(g)(U_1) \land\Lb_1(g)(U_2))) \ar{rr}{g_2} && (c_2,\fb_{c_2}^{-1}(U_2)) .
		\end{tikzcd}\]

		\item Finally, we demonstrate that equalizers are relatively preserved.  Let 
		\[
		\begin{tikzcd}
			(c',U') \ar[shift left]{r}{f_1} \ar[shift right]{r}[']{f_2} & (c,U)
		\end{tikzcd}
		\]
		be a pair of parallel morphisms of $\cat \rtimes \Lb_2$.  If $g \colon (d,V) \to (c',\fb_{c'}^{-1}(U'))$ is a morphism of $\cat \rtimes \Lb_1$ such that $\breve{\fb}(f_1) \circ g = \breve{\fb}(f_2)\circ g$, then $g \colon (d,V) \to (c',\fb_{c'}^{-1}(U'))$ factors through the morphism $g \colon (d,\Lb_1(g)(\fb_{c'}^{-1}(U'))) \to (c',\fb_{c'}^{-1}(U'))$, which is of the form
		\[\breve{\fb}(g) \colon (d,\fb_d^{-1}(\Lb_2(g)(U'))) \to (c',\fb_{c'}^{-1}(U')).\]
	\end{enumerate}
	Finally, the triangle (\ref{triangle}) commutes by Lemma \ref{fibrcomoprhmorphthm}.
\end{proof}

Let $\Lb_1, \, \Lb_2$ be internal locales of $\Sh(\cat,J)$ with an internal locale morphism $\fb \colon \mathbb{L}_1 \to \mathbb{L}_2$.  We will write $ \Sh(\fb) \colon \Sh({\Lb_1}) \to \Sh({\Lb_2})$ for the geometric morphism $\Sh(\breve{\fb})$ induced as above.  By \cite[Lemma 1.2(ii)]{fact1}, $\Sh(\fb)$ is also a localic geometric morphism.

\begin{prop}\label{geoinducesmorph}
	Let $\Lb_1, \, \Lb_2 \colon \cat\op \to \Frm_{\rmopen}$ be internal locales of $\Sh(\cat,J)$.  Each geometric morphism
	\[f \colon \Sh({\Lb_1}) \to \Sh({\Lb_2})\]
	for which the triangle
	\begin{equation}\label{triangle2}\begin{tikzcd}[column sep = tiny]
			\Sh({\Lb_1}) \ar{rr}{f} \ar{rd}[']{{C_{p_{\Lb_1}}}} && \Sh({\Lb_2}) \ar{ld}{{C_{p_{\Lb_2}}}} \\
			& \Sh(\cat,J)&
	\end{tikzcd}\end{equation}
	commutes induces an internal locale morphism $\fb \colon \Lb_1 \to \Lb_2$ for which $\Sh(\fb) = f$.
\end{prop}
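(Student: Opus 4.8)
The plan is to recover the internal locale morphism from the action of the inverse image functor $f^\ast$ on subobjects. Abbreviate the localic structure morphisms as $g_i := C_{p_{\Lb_i}} \colon \Sh(\Lb_i) \to \Sh(\cat,J)$ for $i = 1,2$. The description \eqref{subobjdescr}, applied to each $g_i$, supplies natural isomorphisms $\Lb_i \cong \Sub_{\Sh(\Lb_i)}(g_i^\ast \circ \ell_\cat(-))$, so that $\Lb_i(c)$ is the frame of subobjects of $g_i^\ast\ell_\cat(c)$. Commutativity of the triangle \eqref{triangle2} gives $g_1 = g_2 \circ f$, hence a natural isomorphism $g_1^\ast\ell_\cat(c) \cong f^\ast\bigl(g_2^\ast\ell_\cat(c)\bigr)$. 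Since $f^\ast$ is left exact it preserves monomorphisms, and so carries subobjects of $g_2^\ast\ell_\cat(c)$ to subobjects of $f^\ast g_2^\ast\ell_\cat(c) \cong g_1^\ast\ell_\cat(c)$; I define $\fb_c^{-1}$ to be the resulting map, transported along the identifications above:
\[
\fb_c^{-1}\colon \Lb_2(c)\cong \Sub_{\Sh(\Lb_2)}\bigl(g_2^\ast\ell_\cat(c)\bigr)\xrightarrow{\ f^\ast\ }\Sub_{\Sh(\Lb_1)}\bigl(g_1^\ast\ell_\cat(c)\bigr)\cong\Lb_1(c).
\]

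First I would check that $\fb^{-1} = (\fb_c^{-1})_c$ is a morphism of internal locales. Naturality of $\fb^{-1}$ in $c$ is inherited from the naturality of \eqref{subobjdescr} together with that of the comparison $g_1^\ast \cong f^\ast g_2^\ast$. That each $\fb_c^{-1}$ is a frame homomorphism is immediate from the exactness properties of an inverse image functor: $f^\ast$ preserves finite limits, hence the top subobject and binary intersections of subobjects, and $f^\ast$ preserves all colimits, hence image factorisations and therefore arbitrary joins of subobjects. Thus $\fb_c^{-1}$ preserves finite meets and arbitrary joins.

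Next I would verify the morphism-of-adjunctions condition for each arrow $h \colon c \to d$ of $\cat$. Under the identifications above, the transition map $\Lb_i(h)$ is restriction of subobjects along $g_i^\ast\ell_\cat(h)$, while $\exists_{\Lb_i(h)}$ is the existential image along $g_i^\ast\ell_\cat(h)$. The equality $\Lb_1(h)\circ\fb_d^{-1} = \fb_c^{-1}\circ\Lb_2(h)$ is just the naturality established above. The remaining equality $\exists_{\Lb_1(h)}\circ\fb_c^{-1} = \fb_d^{-1}\circ\exists_{\Lb_2(h)}$ reduces to the fact that $f^\ast$ commutes with existential images, i.e. preserves the image factorisation of $S \rightarrowtail g_2^\ast\ell_\cat(c) \to g_2^\ast\ell_\cat(d)$; this holds because $f^\ast$ preserves both the epi and the mono parts of such a factorisation. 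I expect this compatibility with the $\exists$-maps to be the main point requiring care, since it is precisely the place where the relative Beck--Chevalley data of the two internal locales must be reconciled by the single functor $f^\ast$.

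Finally I would show $\Sh(\fb) = f$. By the construction of the topos of internal sheaves in \cref{subsec:topos_of_int_sh}, under the equivalence $\Sh(\Lb_i) \simeq \Sh(\cat\rtimes\Lb_i,K_{\Lb_i})$ the representable at $(c,U)$ is identified with the subobject $U \rightarrowtail g_i^\ast\ell_\cat(c)$; in particular these objects form a generating family. On the one hand, $\Sh(\breve{\fb})^\ast$ sends the representable at $(c,U)$ to the representable at $\breve{\fb}(c,U) = (c,\fb_c^{-1}(U))$, that is, to $\fb_c^{-1}(U)\rightarrowtail g_1^\ast\ell_\cat(c)$. On the other hand, by the very definition of $\fb_c^{-1}$, the functor $f^\ast$ sends $U \rightarrowtail g_2^\ast\ell_\cat(c)$ to $\fb_c^{-1}(U)\rightarrowtail g_1^\ast\ell_\cat(c)$. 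Hence $\Sh(\breve{\fb})^\ast$ and $f^\ast$ agree, compatibly with restriction maps, on the generating objects; as both preserve colimits they are naturally isomorphic, giving $\Sh(\fb) \cong f$ over $\Sh(\cat,J)$, as required.
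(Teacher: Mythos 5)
Your proposal is correct and follows essentially the same route as the paper: both define $\fb_c^{-1}$ as the action of $f^\ast$ on subobjects of $C_{p_{\Lb_2}}^\ast\ell_\cat(c)$ via the identification \cref{subobjdescr}, deduce the frame-homomorphism and morphism-of-adjunctions conditions from the exactness properties of an inverse image functor (where the paper cites \cite[p.~496-8]{SGL}, you argue the preservation of meets, joins and image factorisations directly), and establish $\Sh(\fb) = f$ by comparing the two inverse image functors on the representables $\ell_{\cat\rtimes\Lb_2}(c,U)$. The only cosmetic difference is that you make the final density/colimit-preservation argument explicit, whereas the paper appeals to \cite[Theorem VII.10.2]{SGL}.
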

\begin{proof}
	For each object $E$ of $\Sh({\Lb_2})$, the function that sends a subobject $U \rightarrowtail E$ to $f^\ast(U) \rightarrowtail f^\ast(E)$ is a frame homomorphism
	\[f^\ast_E \colon \Sub_{\Sh(\Lb_2)}(E) \to \Sub_{\Sh(\Lb_1)}(f^\ast(E)).\]
	Moreover, for each arrow $g \colon E \to E'$ of $\Sh(\Lb_2)$, the diagram
	\[\begin{tikzcd}
		\Sub_{\Sh(\Lb_2)}(E) \ar[shift right]{r}[']{g^{-1}} \ar{d}{f^\ast_E}& \Sub_{\Sh(\Lb_2)}(E') \ar[shift right]{l}[']{\exists_g} \ar{d}{f^\ast_{E'}} \\
		\Sub_{\Sh(\Lb_1)}(f^\ast(E)) \ar[shift right]{r}[']{f^\ast(g)^{-1}}&
		\Sub_{\Sh(\Lb_1)}(f^\ast(E')) \ar[shift right]{l}[']{\exists_{f^\ast(g)}}
	\end{tikzcd}\]
	commutes (see \cite[p.~496-8]{SGL}).  
	
	Since the triangle \cref{triangle2} commutes, i.e. $f^\ast \circ C_{p_{\Lb_2}}^\ast = C_{p_{\Lb_1}}^\ast$, we have that, for each object $c$ of $\cat$,
	\[f^\ast \circ C_{p_{\Lb_2}}^\ast(\ell_\cat(c)) = C_{p_{\Lb_1}}^\ast(\ell_\cat(c)),\]
	where $\ell_\cat$ denotes the canonical functor $\ell_\cat \colon \cat \to \Sh(\cat,J)$.  We observed in equation \cref{subobjdescr} that
	\[\Lb_1 \cong \Sub_{\Sh(\Lb_1)}(C_{p_{\Lb_1}}^\ast\circ \ell_\cat(-)) \colon \cat\op \to \Frm_{\rmopen},\]
	and similarly for $\Lb_2$.  Hence, the frame homomorphisms
	\[f^\ast_{ C_{p_{\Lb_2}}^\ast(\ell_\cat(c))} \colon \Sub_{\Sh(\Lb_2)}( C_{p_{\Lb_2}}^\ast(\ell_\cat(c))) \to \Sub_{\Sh({\Lb_1})}( C_{p_{\Lb_1}}^\ast(\ell_\cat(c))),\]
	for each object $c$ of $\cat$, collectively define an internal locale morphism $\fb \colon \Lb_1 \to \Lb_2$ where $\fb_c^{-1}(U) = f^\ast(U)$ for each subobject $U \rightarrowtail C_{p_{\Lb_2}}^\ast(\ell_\cat(c))$.
	
	Finally, that $\Sh(\fb) = f$ follows from the description of the inverse image $\Sh(\fb)^\ast$ of a geometric morphism induced by a morphism of sites, as can be found in \cite[Theorem VII.10.2]{SGL}.  Namely, for each $U \in \Lb_2(c)$, we have that
	\[\Sh(\fb)^\ast(\ell_{\cat \rtimes \Lb_2}(c,U)) = \ell_{\cat \rtimes \Lb_1}(c,\fb_c^{-1}(U)) = \ell_{\cat \rtimes \Lb_1}(c,f^\ast(U)) = f^\ast(U).\]
\end{proof}

Thus we obtain our bijective correspondence between between:
\begin{itemize}
	\item the internal locale morphisms $\fb \colon \Lb_1 \to \Lb_2$,
	\item the morphisms of fibrations 
	\[
	\begin{tikzcd}
		\cat \rtimes \Lb_2  \ar{rd}[']{p_{\Lb_2}} \ar{rr}{\breve{\fb}} && 	\cat \rtimes \Lb_1  \ar{ld}{p_{\Lb_1}}\\
		& \cat &
	\end{tikzcd}
	\]
	which yield morphisms of sites
	\[
	\breve{\fb} \colon (\cat \rtimes \Lb_1, K_{\Lb_1}) \to (\cat \rtimes \Lb_2, K_{\Lb_2}),
	\]
	\item and the geometric morphisms $f \colon \Sh(\Lb_1) \to \Sh(\Lb_2)$ for which the triangle
	\[\begin{tikzcd}
		\Sh(\Lb_1) \ar{rr}{f}\ar{rd}[']{C_{p_{\Lb_1}}} && \Sh(\Lb_2)\ar{ld}{C_{p_{\Lb_2}}} \\
		&\Sh(\cat,J) &
	\end{tikzcd}\]
	commutes.
\end{itemize}
We now use this bijective correspondence to establish the biequivalence \cref{equivpeekpreview}, as also performed in \cite[Corollary 3.5]{fibredsites}.

\begin{thm}\label{catofintloc}
	There is an equivalence of 2-categories:
	\[\Loc(\Sh(\cat,J)) \simeq \mathfrak{Loc}/\Sh(\cat,J).\]
\end{thm}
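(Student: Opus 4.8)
The plan is to package the constructions of Propositions \ref{morphinducesgeo} and \ref{geoinducesmorph} into a single pseudofunctor
\[\Sh(-) \colon \Loc(\Sh(\cat,J)) \to \mathfrak{Loc}/\Sh(\cat,J)\]
and then verify that it is a biequivalence by means of the standard recognition criterion: a pseudofunctor is a biequivalence exactly when it is biessentially surjective on objects and a local equivalence, i.e.\ when it induces an equivalence of categories on each hom-category. I would define $\Sh(-)$ on objects by $\Lb \mapsto \big(C_{p_\Lb} \colon \Sh(\Lb) \to \Sh(\cat,J)\big)$, on $1$-cells by $\fb \mapsto \Sh(\fb)$ as in Proposition \ref{morphinducesgeo}, and on a $2$-cell $\fb \Rightarrow \mathfrak{g}$ by the induced geometric transformation $\Sh(\fb) \Rightarrow \Sh(\mathfrak{g})$. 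Pseudofunctoriality reduces to the identity $\breve{\mathfrak{g} \circ \fb} = \breve{\fb} \circ \breve{\mathfrak{g}}$ of the associated functors, after which coherence is inherited from the bifunctoriality of the assignment ${\bf MorphSites}\op \to \Topos$ sending a morphism of sites to its induced geometric morphism.

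Essential surjectivity on objects is immediate from Theorem \ref{localicmorph}: an arbitrary object of $\mathfrak{Loc}/\Sh(\cat,J)$ is a localic geometric morphism $f \colon \topos \to \Sh(\cat,J)$, and Theorem \ref{localicmorph} furnishes an equivalence $\topos \simeq \Sh(\Lb)$ for the internal locale $\Lb = f_\ast(\Omega_\topos)$, under which $f$ is identified with $C_{p_\Lb}$. For the local equivalence, I would fix internal locales $\Lb_1, \Lb_2$ and analyse the functor
\[\Loc(\Sh(\cat,J))(\Lb_1,\Lb_2) \to (\mathfrak{Loc}/\Sh(\cat,J))(C_{p_{\Lb_1}}, C_{p_{\Lb_2}})\]
induced by $\Sh(-)$. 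Its essential surjectivity on $1$-cells is precisely Proposition \ref{geoinducesmorph}, which produces from any $f$ commuting as in \cref{triangle2} an internal locale morphism $\fb$ with $\Sh(\fb) = f$; combined with Proposition \ref{morphinducesgeo} this makes $\fb \mapsto \Sh(\fb)$ a bijection on isomorphism classes of $1$-cells.

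The remaining — and, I expect, most delicate — point is the behaviour on $2$-cells, which amounts to local full faithfulness. Both hom-categories are preorders: a $2$-cell $\fb \Rightarrow \mathfrak{g}$ between internal locale morphisms $\Lb_1 \to \Lb_2$ is the (unique, when it exists) assertion that $\fb^{-1}_c(V) \leqslant \mathfrak{g}^{-1}_c(V)$ for every $c \in \cat$ and $V \in \Lb_2(c)$, since $\Frm_{\rmopen}$ is poset-enriched, while for localic geometric morphisms over $\Sh(\cat,J)$ the $2$-cells are likewise governed by the pointwise order (cf.\ \cite[\S IX.5]{SGL}). Full faithfulness then reduces to showing that $\fb \mapsto \Sh(\fb)$ is order-preserving and order-reflecting. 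Using the description of $\Sh(\fb)^\ast$ on the generating objects $\ell_{\cat \rtimes \Lb_2}(c,U)$ from the proof of Proposition \ref{geoinducesmorph}, together with the identification \cref{subobjdescr} of each $\Lb_i$ with $\Sub_{\Sh(\Lb_i)}(C_{p_{\Lb_i}}^\ast \circ \ell_\cat(-))$, a geometric transformation $\Sh(\fb) \Rightarrow \Sh(\mathfrak{g})$ corresponds to a compatible family of subobject inclusions, which is exactly the inequality $\fb \leqslant \mathfrak{g}$. Having thus verified essential surjectivity on objects and a local equivalence, the recognition criterion delivers the biequivalence \cref{equivpeekpreview}.
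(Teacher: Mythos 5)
The proposal is correct and follows essentially the same route as the paper's proof: both rest on Propositions \ref{morphinducesgeo} and \ref{geoinducesmorph} for the correspondence of $1$-cells, on Theorem \ref{localicmorph} together with the isomorphism $\Lb \cong {C_{p_\Lb}}_\ast\left(\Omega_{\Sh(\Lb)}\right)$ for the correspondence of objects, and on the site-theoretic description of $\Sh(\fb)^\ast$ on the generators $\ell_{\cat \rtimes \Lb}(c,U)$ for the $2$-cells. The differences are purely organisational: the paper exhibits an explicit pseudo-inverse $\mathfrak{L} \colon f \mapsto f_\ast(\Omega_\topos)$ and checks that the two composites are isomorphic to identities, whereas you verify the biequivalence recognition criterion (biessential surjectivity plus local equivalence) for the single pseudofunctor $\Sh(-)$; likewise your observation that both hom-categories are preorders, so that local full faithfulness reduces to order-preservation and order-reflection, is a slightly sharper packaging of the paper's two-directional $2$-cell argument via \cite[Remark C2.3.5]{elephant} and the natural transformation of ${\rm Sub}$-functors.
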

\begin{proof}
	By $\mathfrak{L} \colon \mathfrak{Loc}/\Sh(\cat,J) \to \Loc\left(\Sh(\cat,J)\right)$ denote the (1-)functor that sends a localic geometric morphism $f \colon \topos \to \Sh(\cat,J)$ to the internal locale $f_\ast(\Omega_\topos)$ and a commuting geometric morphism 
	\[\begin{tikzcd}[column sep = tiny]
		\topos \ar{rr}{g} \ar{dr}[']{f} && \topos' \ar{ld}{f'} \\
		& \Sh(\cat,J) &
	\end{tikzcd}\]
	to the internal locale morphism $\mathfrak{g} \colon f_\ast(\Omega_\topos) \to f'_\ast(\Omega_{\topos'})$ induced by Proposition \ref{geoinducesmorph}.
	
	By $\mathfrak{T} \colon \Loc\left(\Sh(\cat,J)\right) \to \mathfrak{Loc}/\Sh(\cat,J)$ denote the functor that sends an internal locale $\Lb$ to the localic geometric morphism $C_{p_\Lb} \colon \Sh(\cat \rtimes \Lb, K_\Lb) \to \Sh(\cat,J)$ and an internal locale morphism $\fb \colon \Lb \to \Lb'$ to $\Sh(\fb)$.  By the isomorphism $\Lb \cong {C_{p_\Lb}}_\ast\left(\Omega_{\Sh(\Lb)}\right)$ and Proposition \ref{geoinducesmorph}, the functors $\mathfrak{L}$ and $\mathfrak{T}$ are mutually inverse.
	
	This 1-equivalence extends to a biequivalence.  One direction of the equivalence
	\[\Hom_{\Loc(\Sh(\cat,J))}(\Lb,\Lb') \cong \Hom_{\mathfrak{Loc}/\Sh(\cat,J)}(\Sh(\Lb),\Sh(\Lb'))\]
	follows from \cite[Remark C2.3.5]{elephant}: every natural transformation between morphisms of sites yields a natural transformation between the induced geometric morphisms.  The other direction is obtained by noting that a natural transformation of inverse image functors
	\[\begin{tikzcd}
		{{\bf Sh}(\mathbb{L})} && {{\bf Sh}(\mathbb{L}'),}
		\arrow[""{name=0, anchor=center, inner sep=0}, "{{\bf Sh}(\mathfrak{f})^\ast}"', curve={height=20pt}, from=1-3, to=1-1]
		\arrow[""{name=1, anchor=center, inner sep=0}, "{{\bf Sh}(\mathfrak{f}')^\ast}", curve={height=-20pt}, from=1-3, to=1-1]
		\arrow["\alpha"', shorten <=3pt, shorten >=3pt, Rightarrow, from=0, to=1]
	\end{tikzcd}\]
	for two internal locale morphisms $\fb, \fb' \colon \Lb \to \Lb'$, induces a natural transformation
	\[\begin{tikzcd}
		{{\rm Sub}_{{\bf Sh}(\mathbb{L})}(-)} && {{\rm Sub}_{{\bf Sh}(\mathbb{L}')}(-).}
		\arrow[""{name=0, anchor=center, inner sep=0}, "{{\bf Sh}(\mathfrak{f})^\ast}"', curve={height=20pt}, from=1-3, to=1-1]
		\arrow[""{name=1, anchor=center, inner sep=0}, "{{\bf Sh}(\mathfrak{f}')^\ast}", curve={height=-20pt}, from=1-3, to=1-1]
		\arrow["\alpha"', shorten <=3pt, shorten >=3pt, Rightarrow, from=0, to=1]
	\end{tikzcd}\]
\end{proof}

\begin{coro}\label{terminallocale}
	The subobject classifier $\Omega_\topos$ of a topos is the terminal object of $\Loc(\topos)$.
\end{coro}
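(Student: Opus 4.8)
The plan is to leverage the biequivalence established in \cref{catofintloc}. First I would fix a site $(\cat,J)$ presenting the topos, so that $\topos \simeq \Sh(\cat,J)$ and hence $\Loc(\topos) \simeq \mathfrak{Loc}/\topos$ via the mutually inverse (bi)functors $\mathfrak{L}$ and $\mathfrak{T}$ constructed in the proof of that theorem. Since a biequivalence preserves all bicategorical universal properties, and in particular bi-terminal objects, it suffices to exhibit a bi-terminal object of $\mathfrak{Loc}/\topos$ and to compute its image under $\mathfrak{L}$.

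The candidate bi-terminal object of the slice bicategory $\mathfrak{Loc}/\topos$ is the identity geometric morphism $\id_\topos \colon \topos \to \topos$, which is localic since every object $F$ of $\topos$ is trivially a subquotient of $\id_\topos^\ast(F) = F$ (take $F'=F$ with both legs the identity). To verify that it is bi-terminal, I would check that for each localic $f \colon \topos' \to \topos$ the hom-category $\Hom_{\mathfrak{Loc}/\topos}(f,\id_\topos)$ is equivalent to the terminal category: a $1$-cell from $f$ to $\id_\topos$ consists of a geometric morphism $g \colon \topos' \to \topos$ together with an isomorphism $\id_\topos \circ g \cong f$ filling the defining triangle, so $g$ is forced to be $f$ up to canonical isomorphism, and the comparison $2$-cells collapse accordingly. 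This is the usual contractibility of the hom-categories mapping into the identity object of a slice.

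Finally, I would compute $\mathfrak{L}(\id_\topos) = (\id_\topos)_\ast(\Omega_\topos) = \Omega_\topos$, using that the direct image of the identity geometric morphism is the identity functor. Transporting bi-terminality back along the equivalence $\mathfrak{L}$ then yields that $\Omega_\topos$ is the terminal object of $\Loc(\topos)$, as required.

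I expect the only genuine subtlety to be the bicategorical bookkeeping: making precise that $\id_\topos$ is bi-terminal in $\mathfrak{Loc}/\topos$ (i.e.\ the contractibility of the relevant hom-categories) and that the biequivalence $\mathfrak{L}$ transports this property to $\Loc(\topos)$. Everything else --- the localicity of the identity morphism and the identification of its direct image with $\Omega_\topos$ --- is immediate.
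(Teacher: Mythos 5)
Your proposal is correct and is essentially the paper's own argument: the paper's proof simply observes that $\id_\topos$ is the terminal object of $\mathfrak{Loc}/\topos$ and invokes the equivalence of Theorem \ref{catofintloc}, exactly the route you take. Your version just spells out the details (localicity of the identity, contractibility of the hom-categories, and $(\id_\topos)_\ast(\Omega_\topos) = \Omega_\topos$) that the paper leaves implicit.
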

\begin{proof}
	The identity $\id_\topos \colon \topos \to \topos$ is the terminal object of $\mathfrak{Loc}/\topos$ .
\end{proof}






\subsection{Surjective internal locale morphisms}\label{subsec:surj}

We now turn to characterising some properties of the geometric morphisms induced by internal locale morphisms.  Recall that a locale morphism $f \colon L \to K$ is a \emph{surjection} if the corresponding frame homomorphism $f^{-1}\colon  K \to L$ is injective.  Recall also that a geometric morphism $f \colon \ftopos \to \topos$ is a \emph{surjection} if the inverse image functor $f^\ast \colon \topos \to \ftopos$ is faithful.  In \cite[Proposition X.5.5(i)]{SGL}, it is shown that a locale morphism $f \colon L \to K$ is surjective if and only if the corresponding geometric morphism $\Sh(f) \colon \Sh(L) \to \Sh(K)$ is surjective.  We extend this to the internal setting, and show that surjections of internal locales can be characterised `pointwise'.

\begin{df}
	Let $\fb \colon \Lb_1 \to \Lb_2$ be an internal locale morphism of $\Sh(\cat,J)$.  We say that $\fb$ is a \emph{surjective internal locale morphism} if, for each $c \in \cat$, $\fb_c^{-1} \colon \Lb_2(c) \to \Lb_1(c)$ is injective.
\end{df}

\begin{prop}\label{prop:surjintlocmorph}
	Let $\fb \colon \Lb_1 \to \Lb_2$ be an internal locale morphism of $\Sh(\cat,J)$.  The following are equivalent:
	\begin{enumerate}
		\item the geometric morphism $\Sh(\fb)$ is a surjective geometric morphism,
		\item $\fb$ is a surjective internal locale morphism.
	\end{enumerate}
\end{prop}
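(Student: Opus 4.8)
The plan is to route everything through the behaviour of the inverse image functor $\Sh(\fb)^\ast \colon \Sh(\Lb_1) \to \Sh(\Lb_2)$ on subobject lattices. Recall that $\Sh(\fb)$ is surjective exactly when $\Sh(\fb)^\ast$ is faithful. The first move is to replace faithfulness by the more tractable condition that $\Sh(\fb)^\ast \colon \Sub(E) \to \Sub(\Sh(\fb)^\ast E)$ is injective for every object $E$: if $\Sh(\fb)^\ast$ is faithful then, since it preserves the meet $A \wedge A'$ (being left exact), injectivity on subobjects follows by reflecting the isomorphism $A \wedge A' \rightarrowtail A$; conversely, a pair of distinct parallel maps $a \neq b$ has a proper equalizer $E' \subsetneq X$, and $\Sh(\fb)^\ast a = \Sh(\fb)^\ast b$ would force $\Sh(\fb)^\ast E' = \Sh(\fb)^\ast X$, contradicting injectivity on $\Sub(X)$. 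So it suffices to compare (i) and (ii) through these lattices.

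The computational heart is identifying the relevant lattices. For each representable generator $\ell_{\cat \rtimes \Lb_2}(c,U)$ of $\Sh(\Lb_2)$, combining \cref{subobjclassforintloc} with the Yoneda isomorphism $\Hom_{\Sh(\Lb_2)}(\ell_{\cat \rtimes \Lb_2}(c,U),\Omega) \cong \Omega_{\Sh(\Lb_2)}(c,U)$ gives $\Sub_{\Sh(\Lb_2)}(\ell_{\cat \rtimes \Lb_2}(c,U)) \cong \{\,W \in \Lb_2(c) \mid W \leqslant U\,\}$, the subobject attached to $W$ being $\ell_{\cat \rtimes \Lb_2}(c,W) \rightarrowtail \ell_{\cat \rtimes \Lb_2}(c,U)$. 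By the description of $\Sh(\fb)^\ast$ on representables recorded in the proof of \cref{geoinducesmorph} (via \cite[Theorem VII.10.2]{SGL}), this subobject is carried to $\ell_{\cat \rtimes \Lb_1}(c,\fb_c^{-1}(W)) \rightarrowtail \ell_{\cat \rtimes \Lb_1}(c,\fb_c^{-1}(U))$. Thus on these lattices $\Sh(\fb)^\ast$ is precisely $\fb_c^{-1}$ restricted to the down-set of $U$; taking $U = \top_c$ shows that $\Sh(\fb)^\ast$ on $\Sub(\ell_{\cat \rtimes \Lb_2}(c,\top_c)) \cong \Lb_2(c)$ is exactly $\fb_c^{-1}$.

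The direction (i)$\Rightarrow$(ii) is then immediate: if $\Sh(\fb)$ is surjective, $\Sh(\fb)^\ast$ is injective on every subobject lattice, in particular on $\Sub(\ell_{\cat \rtimes \Lb_2}(c,\top_c))$, so each $\fb_c^{-1}$ is injective. For (ii)$\Rightarrow$(i), the hypothesis gives injectivity of $\Sh(\fb)^\ast$ on the subobject lattice of \emph{every} representable generator (for $\downarrow U \subseteq \Lb_2(c)$, as $\fb_c^{-1}$ is injective on all of $\Lb_2(c)$). The main obstacle is to promote this to injectivity on $\Sub(E)$ for an arbitrary $E$. I would do this by a pullback argument: reducing, via the meet $A \wedge A'$ as above, to showing that no proper mono $A \rightarrowtail E$ is sent to an isomorphism, I use that the representables generate to pick $u \colon \ell_{\cat \rtimes \Lb_2}(c,U) \to E$ not factoring through $A$; the pullback $A \times_E \ell_{\cat \rtimes \Lb_2}(c,U)$ is then a proper subobject of $\ell_{\cat \rtimes \Lb_2}(c,U)$, so injectivity on $\Sub(\ell_{\cat \rtimes \Lb_2}(c,U))$ keeps $\Sh(\fb)^\ast$ of it proper. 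Since $\Sh(\fb)^\ast$ preserves pullbacks and isomorphisms are stable under pullback, $\Sh(\fb)^\ast(A) \rightarrowtail \Sh(\fb)^\ast(E)$ cannot be an isomorphism. Hence $\Sh(\fb)^\ast$ is injective on all subobject lattices, therefore faithful, and $\Sh(\fb)$ is surjective.
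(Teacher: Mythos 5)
Your proof is correct, but it takes a genuinely different route from the paper's. The paper stays entirely at the level of sites: by \cite[Theorem 6.3]{denseness}, the morphism $\Sh(\fb)$ is surjective if and only if the morphism of sites $\breve{\fb} \colon (\cat \rtimes \Lb_2, K_{\Lb_2}) \to (\cat \rtimes \Lb_1, K_{\Lb_1})$ is \emph{cover-reflecting}, and this is matched against injectivity of the components by a short frame computation: the identity $\exists_{\Lb_1(g)} \circ \fb_c^{-1} = \fb_d^{-1} \circ \exists_{\Lb_2(g)}$ and preservation of joins let one pull $\fb_d^{-1}$ outside the join defining a cover and cancel it by injectivity, while conversely cover-reflection applied to maximal sieves gives injectivity. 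You instead work at the level of the toposes: you re-derive the standard characterisation that a geometric morphism is surjective precisely when its inverse image is injective on subobject lattices (cf.\ \cite[\S A4.2]{elephant}, which you could simply cite), sharpen it to the statement that injectivity on the subobject lattices of a generating family suffices (your pullback argument), and then identify $\Sub_{\Sh(\Lb_2)}(\ell_{\cat \rtimes \Lb_2}(c,U))$ with the down-set of $U$ in $\Lb_2(c)$, on which $\Sh(\fb)^\ast$ acts as $\fb_c^{-1}$, via Remark~\ref{subobjclassforintloc} and Proposition~\ref{geoinducesmorph}. What the paper's route buys is brevity and consistency with its site-theoretic toolkit; what yours buys is a transparent explanation of \emph{why} surjectivity is a `pointwise' matter -- the subobject lattices of the generators of $\Sh(\Lb_2)$ are exactly the down-sets in the fibres of $\Lb_2$, and $\Sh(\fb)^\ast$ restricts on them to the components $\fb_c^{-1}$ -- at the cost of re-proving two standard topos-theoretic facts. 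Two details deserve a line each: your claim that the subobject of $\ell_{\cat \rtimes \Lb_2}(c,U)$ corresponding to $W \leqslant U$ is precisely $\ell_{\cat \rtimes \Lb_2}(c,W)$ should be verified (chase $W$ through its classifying map at the presheaf level and sheafify), and your appeal to a faithful inverse image reflecting isomorphisms implicitly uses that toposes are balanced; both are fine, but should be said.
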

\begin{proof}
	By \cite[Theorem 6.3]{denseness}, the geometric morphism $\Sh(\fb)$ is surjective if and only if the corresponding morphism of sites 
	\[\breve{\fb} \colon (\cat \rtimes \Lb_2 ,K_{\Lb_2}) \to (\cat \rtimes \Lb_1, K_{\Lb_1})\]
	is cover reflecting.  Suppose that each $\fb_d^{-1}$ is injective.  Let $S$ be sieve of $\cat \rtimes \Lb_2$ on $(d,V)$ such that $\breve{\fb}(S)$ is $K_{\Lb_1}$-covering, i.e. $\fb_d^{-1}(V) = \bigvee_{g \in S} \exists_{\Lb_1(g)} \fb_c^{-1}(U)$.  We have that
	\begin{equation*}
		\begin{split}
			\fb_d^{-1}(V)& = \bigvee_{g \in S} \exists_{\Lb_1(g)} \fb_c^{-1}(U), \\
			& = \bigvee_{g \in S} \fb_d^{-1} \exists_{\Lb_2(g)} U , \\
			& = \fb_d^{-1} \left( \bigvee_{g \in S}  \exists_{\Lb_2(g)} U\right).
		\end{split}
	\end{equation*}
	Thus, $V =  \bigvee_{g \in S}  \exists_{\Lb_2(g)} U $ and so $S$ is $K_{\Lb_2}$-covering.  Conversely, if $\breve{\fb}$ is cover reflecting and $\fb_c^{-1}(U) = \fb_c^{-1}(V)$ for a pair $U,V \in \Lb_2(c)$, then $\breve{\fb}$ reflects the maximal cover.  Hence, we conclude that $U = V$.
\end{proof}



\section{Internal embeddings and nuclei}\label{sec:emb}


This section is dedicated to the study of internal locale embeddings.  Their study is continued in \cref{sec:subtoposes_and_quotients} and \cref{frameintnuc}.  Recall that a locale morphism $f \colon K \to L$ is said to be an \emph{embedding} if the corresponding frame homomorphism $f^{-1} \colon L \to K$ is surjective.  Just as with surjective internal locale morphisms, we define internal locale embeddings as the `pointwise' generalisation.

\begin{df}
		Let $\fb \colon \Lb_1 \to \Lb_2$ be an internal locale morphism of $\sets^{\cat\op}$.  We say that $\fb$ is an \emph{internal locale embedding} if, for each $c \in \cat$, $\fb_c^{-1} \colon \Lb_2(c) \to \Lb_1(c)$ is surjective.  We will also refer to $\Lb_1$ as an \emph{internal sublocale} of $\Lb_2$ and as $\fb$ as the \emph{inclusion} of this internal sublocale.
\end{df}

Recall also that a geometric morphism $f \colon \ftopos \to \topos$ is said to be a \emph{geometric embedding} (and $\ftopos$ a \emph{subtopos} of $\topos$) if the direct image functor $f_\ast$ is full and faithful.  Recall that, by \cite[Proposition IX.5.4]{SGL}, geometric embeddings generalise embeddings of sublocales in the sense that, given a locale morphism $f \colon K \to L$, the induced geometric morphism $\Sh(f) \colon \Sh(K) \to \Sh(L)$ between the toposes of sheaves is a geometric embedding if and only if $f$ is an embedding of locales.  The aim of this section is to prove an analogous result for embeddings of internal locales: that, given a morphism of internal locales $\fb \colon \Lb' \to \Lb$ of $\Sh(\cat,J)$, the geometric morphism $\Sh(\fb)$ is an embedding if and only if $\fb$ is an internal locale embedding.  One direction is easily achieved, as established below, by applying results from \cite[\S 6]{denseness}.  Demonstrating the converse is postponed to \cref{subsec:geomemb}.

\begin{prop}\label{intlocinclusion}
	If $\fb \colon \mathbb{L}_1 \to \mathbb{L}_2$ is an internal locale embedding of $\Sh(\cat,J)$, then $\Sh(\fb)$ is a geometric embedding.
\end{prop}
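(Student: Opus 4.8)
The plan is to prove that the direct image functor $\Sh(\fb)_\ast$ is full and faithful, which is exactly the assertion that $\Sh(\fb)$ is a geometric embedding. By \cref{morphinducesgeo} the geometric morphism $\Sh(\fb)$ is the one induced by the morphism of sites $\breve{\fb} \colon (\cat \rtimes \Lb_2, K_{\Lb_2}) \to (\cat \rtimes \Lb_1, K_{\Lb_1})$, so by the description of such geometric morphisms (see \cite[Theorem VII.10.2]{SGL}) its direct image is precomposition with $\breve{\fb}$, sending a $K_{\Lb_1}$-sheaf $G$ to $G \circ \breve{\fb}$. Since $\fb$ is an internal locale embedding, each $\fb_c^{-1} \colon \Lb_2(c) \to \Lb_1(c)$ is surjective, and therefore $\breve{\fb}$, which acts by $(c,U) \mapsto (c, \fb_c^{-1}(U))$, is \emph{surjective on objects}; this is the fact that drives the whole argument.

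Faithfulness is then immediate, for two natural transformations that agree after whiskering with a functor surjective on objects must have equal components at every object. The content lies in fullness. Given $\gamma \colon G \circ \breve{\fb} \to G' \circ \breve{\fb}$, I would define a candidate lift $\alpha \colon G \to G'$ by $\alpha_{(c,W)} := \gamma_{(c,U)}$ for some (any) $U \in \Lb_2(c)$ with $\fb_c^{-1}(U) = W$, and then verify that $\alpha$ is well defined, that it is natural, and that $\alpha \circ \breve{\fb} = \gamma$; the last identity is immediate from the construction.

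Both verifications turn on $\fb_c^{-1}$ being a frame homomorphism. For well-definedness, if $U$ and $U'$ both map to $W$ then so does $U \vee U'$, and the canonical arrows $(c,U) \to (c, U \vee U')$ and $(c,U') \to (c, U \vee U')$ of $\cat \rtimes \Lb_2$ are carried by $\breve{\fb}$ to the identity of $(c,W)$; naturality of $\gamma$ against these forces $\gamma_{(c,U)} = \gamma_{(c,U\vee U')} = \gamma_{(c,U')}$. For naturality of $\alpha$ along an arrow $g \colon (c,W) \to (c',W')$ of $\cat \rtimes \Lb_1$, I would lift $g$ to the arrow $(c, U_0 \wedge \Lb_2(g)(U')) \to (c', U')$ of $\cat \rtimes \Lb_2$, where $U_0$ and $U'$ are chosen preimages of $W$ and $W'$; preservation of binary meets ensures the source still maps to $W$, and the naturality square of $\gamma$ at this lifted arrow is precisely the naturality square demanded of $\alpha$.

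The step I expect to be the main obstacle is the well-definedness of $\alpha$ — that the component does not depend on the chosen preimage — since this is exactly the point where surjectivity alone is insufficient and one must invoke preservation of joins. It is worth noting that this reasoning uses only functoriality and the frame operations, not the covering structure, so it equally establishes the embedding criterion for morphisms of sites that is the counterpart, in \cite[\S 6]{denseness}, of the cover-reflecting criterion \cite[Theorem 6.3]{denseness} applied in \cref{prop:surjintlocmorph}; one could alternatively phrase the proof as a direct appeal to that criterion.
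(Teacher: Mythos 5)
Your proof is correct, and it takes a genuinely different route from the paper's. The paper argues on the inverse-image side: it factors the flat $K_{\Lb_2}$-continuous functor corresponding to $\Sh(\fb)$ as $\ell_{\cat\rtimes\Lb_1}\circ\breve{\fb}$, observes that $\ell_{\cat\rtimes\Lb_1}$ satisfies the embedding criterion of \cite[Corollary 6.4]{denseness} (being part of an equivalence), and that $\breve{\fb}$ is surjective on both objects and arrows, so that the criterion is inherited by the composite. You argue on the direct-image side: identifying $\Sh(\fb)_\ast$ with precomposition $(-)\circ\breve{\fb}$, you verify full faithfulness by hand --- faithfulness from surjectivity of $\breve{\fb}$ on objects, fullness by constructing the lift $\alpha$ of $\gamma$, with join-preservation of $\fb_c^{-1}$ securing well-definedness (two preimages are reconciled through their join, which $\breve{\fb}$ collapses to an identity) and meet-preservation together with naturality of $\fb^{-1}$ securing naturality of the lift. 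Note that your meet trick, lifting $g\colon (c,W)\to(c',W')$ to $(c,U_0\wedge\Lb_2(g)(U'))\to(c',U')$, is exactly what underlies the paper's assertion that $\breve{\fb}$ is surjective on arrows, so the two arguments share the same combinatorial core. What the paper's route buys is brevity and uniformity with the site-theoretic toolkit of \cite{denseness} used throughout (Theorem 6.3 of \cite{denseness} for surjections in Proposition \ref{prop:surjintlocmorph}, Corollary 6.4 here); what yours buys is a self-contained, elementary verification that never invokes flatness or the covering structure and makes explicit which properties of the frame homomorphisms $\fb_c^{-1}$ (surjectivity, joins, meets) are responsible for the embedding --- and, as your closing remark correctly observes, your computation amounts to a direct proof of the relevant instance of the criterion that the paper cites.
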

\begin{proof}
	Let $(\mathcal{D},J)$ be a site and $\topos$ a Grothendieck topos.  By \cite[Corollary 6.4]{denseness}, a $J$-continuous flat functor $G \colon \mathcal{D} \to \topos$ yields a geometric embedding if and only if:
	\begin{enumerate}
		\item each object $E$ of $\topos$ is covered by objects of the form $G(d)$, for $d \in \mathcal{D}$,
		\item for each pair of objects $d, \, d'$ of $\mathcal{D}$ and arrow $g \colon G(d) \to G(d')$ of $\topos$, there exists a family of morphisms $S = \{\,h_i \colon e_i \to d\mid i \in I\,\}$ such that $G(S)$ is jointly epimorphic in $\topos$ and, for each $i \in I$, $g \circ G(h_i) = G(k_i)$ for some arrow $k_i \colon e_i \to d'$ in $\mathcal{D}$.
	\end{enumerate}
	
	The canonical functor $\ell_{\cat \rtimes \Lb_1} \colon \cat \rtimes \Lb_1 \to \Sh(\Lb)$ is a $K_{\Lb_1}$-continuous flat functor that induces an equivalence of toposes, which is in particular an inclusion, and so satisfies both conditions of \cite[Corollary 6.4]{denseness}.  The $K_{\Lb_2}$-continuous flat functor $\Sh(\fb)^\ast \circ \ell_{\cat \rtimes \Lb_2} \colon \cat \rtimes \Lb_2 \to \Sh(\Lb_1)$ corresponding to the geometric morphism $\Sh(\fb)\colon \Sh(\Lb_1) \to \Sh(\Lb_2)$ factors as
	\[\begin{tikzcd}
		\cat \rtimes \Lb_2 \ar{r}{\breve{\fb}} \ar[bend left]{rr}{\Sh(\fb)^\ast \circ \ell_{\cat \rtimes \Lb_2}} & \cat \rtimes \Lb_1 \ar{r}{\ell_{\cat \rtimes \Lb_1}} & \Sh(\Lb).
	\end{tikzcd}\]
	If $\fb_c^{-1}$ is surjective for each $c \in \cat$, then $\breve{\fb} \colon \cat \rtimes \Lb_2 \to \cat \rtimes \Lb_1$ is surjective on both objects and arrows.  Thus, as $\ell_{\cat \rtimes \Lb_1}$ satisfies the conditions of \cite[Corollary 6.4]{denseness}, so too does $\Sh(\fb)^\ast \circ \ell_{\cat \rtimes \Lb_2}$.  Hence, $\Sh(\fb)$ is a geometric embedding as desired.
\end{proof}

To prove the converse, we develop a study of \emph{internal nuclei}.  These are the internal generalisations of the nuclei on a locale, and will appear reminiscent of \emph{Lawvere-Tierney topologies} (a similarity that will be made concrete in Theorem \ref{intsublocthm}).  Nuclei are a useful tool when studying sublocales since many properties of sublocales are more readily proven using nuclei than directly.  In particular, that the sublocales of a locale $L$ form a co-frame is often proved via nuclei, as discussed in \cref{frameintnuc} below.  We proceed as follows.
\begin{itemize}
	\item In \cref{subsec:intnuc}, the notion of an internal nucleus on an internal locale $\Lb$ is introduced and it is shown that internal nuclei correspond bijectively with internal sublocales of $\Lb$.
	\item We show in \cref{subsec:geomemb} that internal nuclei on $\Lb$, and thus by extension internal sublocales of $\Lb$, correspond bijectively with Lawvere-Tierney topologies on $\Omega_{\Sh(\Lb)}$, and hence subtoposes of $\Sh(\Lb)$.
	\item Finally, in \cref{subsec:surj_incl}, we conclude that the surjection-inclusion factorisation of a localic geometric morphism is calculated `pointwise'.
\end{itemize}

Since every geometric embedding is localic (see \cite[Example A4.6.2(a)]{elephant}), every subtopos of the topos of sheaves on an internal locale $\Sh(\Lb)$ is therefore obtained by an internal sublocale of $ \Lb$.  Thus, to understand the subtoposes of $\Sh(\Lb)$, it suffices to study the internal sublocales of $\Lb$.



\subsection{Internal nuclei}\label{subsec:intnuc}

Recall from \cite[\S II.2]{stone} that a nucleus on a locale $L$ is a function $j \colon L \to L$ satisfying, for all $x,y \in L$,
\[x \leqslant j(x), \ \ j(j(x)) \leqslant j(x), \ \ j(x \land y) = j(x) \land j(y).\]
These properties are referred to as $j$ being, respectively, \emph{inflationary}, \emph{idempotent}, and \emph{meet-preserving}.  Any function satisfying these properties must also be monotone.

It is well-known (see \cite[Theorem II.2.3]{stone}) that there is a bijective correspondence between nuclei on $L$ and sublocales of $L$.  In one direction, the nucleus associated to a sublocale $f \colon K \rightarrowtail L$ is given by the function $f_\ast f^{-1} \colon L \to L$ (here $f_\ast$ denotes the right adjoint to $f^{-1}$, see Notation \ref{notation}).  Conversely, given a nucleus $j \colon L \to L$, the image of $j$ as a subset of $L$, which we denote by $L^j$, can be given the structure of a frame.  The meets are computed as they are in $L$ while the join of a subset $\{\,U_i \mid i \in I \,\} \subseteq L^j$ is computed as $j\left(\bigvee_{i \in I} U_i \right)$, where $\bigvee_{i \in I} U_i$ is the join in $L$.  It is then clear that $j \colon L \to L^j$ constitutes a surjective frame homomorphism, and hence the inclusion of a sublocale (see \cite[Lemma II.2.2]{stone} or \cite[Proposition IX.4.3]{SGL}).

\begin{df}
		Let $\Lb \colon \cat\op \to \Frm_{\rm open}$ be an internal locale of $\Sh(\cat,J)$.  An \emph{internal nucleus} is a natural transformation $j \colon \Lb \to \Lb$ (as a functor into $\sets$) such that each component $j_c \colon \Lb_c \to \Lb_c$, for $c \in \cat$, is a nucleus on the locale $\Lb_c$.
\end{df}

When the subobject classifier $\Omega_{\Sh(\cat,J)}$ of $\Sh(\cat,J)$ is considered as an internal locale, the definition of an internal nucleus $j \colon \Omega_{\Sh(\cat,J)} \to \Omega_{\Sh(\cat,J)} $ coincides with that of a \emph{Lawvere-Tierney topology} (see \cite[Definition A4.4.1]{elephant}).  For a localic geometric morphism $f \colon \ftopos \to \topos$, we will observe below that internal nuclei on $f_\ast (\Omega_{\ftopos})$ correspond bijectively with Lawvere-Tierney topologies on $\Omega_{\ftopos}$.

First, we establish a bijective correspondence between internal nuclei and internal sublocales that generalises the bijective correspondence for locales (see \cite[Theorem II.2.3]{stone}).

\begin{lem}\label{nucleuslemma}
	Let $j \colon L \to L$ be a nucleus.  For each subset $\{\,U_i \mid i \in I \,\} \subseteq L$, we have that:
	\[j \left(\bigvee_{i \in I} U_i\right) = j \left( \bigvee_{i \in I} j U_i \right).\]
\end{lem}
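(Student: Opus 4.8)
The plan is to prove the two inequalities separately, relying only on the three defining properties of a nucleus (inflationary, idempotent, meet-preserving) together with the monotonicity they entail, as already observed before the statement. First I would record that monotonicity explicitly for use below: if $x \leqslant y$ then $x = x \land y$, so meet-preservation gives $j(x) = j(x \land y) = j(x) \land j(y)$, i.e. $j(x) \leqslant j(y)$.

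For the inequality $j\left(\bigvee_{i \in I} U_i\right) \leqslant j\left(\bigvee_{i \in I} j U_i\right)$, I would simply use that $j$ is inflationary: each $U_i \leqslant j U_i$, so $\bigvee_{i \in I} U_i \leqslant \bigvee_{i \in I} j U_i$, and monotonicity of $j$ yields the claim. For the reverse inequality, the key observation is that for every index $i$ we have $U_i \leqslant \bigvee_{k \in I} U_k$, whence monotonicity gives $j U_i \leqslant j\left(\bigvee_{k \in I} U_k\right)$. Since this holds for all $i$, taking the join over $i$ produces
\[
\bigvee_{i \in I} j U_i \leqslant j\left(\bigvee_{k \in I} U_k\right).
\]
Applying $j$ to both sides (monotonicity again) and then invoking idempotence $j(j(x)) = j(x)$ on the right-hand side gives $j\left(\bigvee_{i \in I} j U_i\right) \leqslant j\left(\bigvee_{k \in I} U_k\right)$, as required. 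Combining the two inequalities yields the desired equality.

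I do not expect a genuine obstacle here, since this is a routine fact about set-based nuclei; the only point requiring slight care is the reverse direction, where one must first bound $\bigvee_{i} j U_i$ below $j\left(\bigvee_k U_k\right)$ and then apply $j$ and idempotence, rather than attempting to distribute $j$ across the infinite join directly — a nucleus preserves finite meets but need not preserve joins.
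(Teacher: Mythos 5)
Your proof is correct and follows essentially the same route as the paper's: the first inequality from inflationarity plus monotonicity, and the reverse by applying $j$ to the canonical inequality $\bigvee_{i \in I} j U_i \leqslant j\left(\bigvee_{i \in I} U_i\right)$ and invoking idempotence. The only difference is that you spell out the monotonicity of $j$ and the derivation of the canonical inequality, which the paper leaves implicit.
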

\begin{proof}
	The first inequality $j \left( \bigvee_{i \in I} U_i \right)\leqslant j \left( \bigvee_{i \in I} j U_i \right)$ is a consequence of $j$ being inflationary as $ U_i \leqslant j U_i$ for each $i \in I$.  The converse inequality is achieved by applying $j$ to both sides of the canonical inequality $\bigvee_{i \in I} jU_i \leqslant j \left( \bigvee_{i \in I} U_i \right)$.
\end{proof}

\begin{prop}\label{intnucgivessubloc}
	Each internal nucleus $j$ on an internal locale $\Lb$ of $\Sh(\cat,J)$ defines an embedding of internal locales $\Lb^j \hookrightarrow \Lb$.
\end{prop}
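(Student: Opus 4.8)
The plan is to build $\Lb^j$ pointwise as the sheaf of fixed points of $j$, verify that it is an internal locale of $\Sh(\cat,J)$, and then recognise the corestriction of $j$ as the inverse image of the desired embedding.

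First I would set $\Lb^j(c) = \Lb_c^{j_c}$, the image of the nucleus $j_c$, equipped with the frame structure recalled above (ambient meets, and joins computed as $j_c\left(\bigvee(-)\right)$). For an arrow $g \colon c \to d$ of $\cat$, naturality of $j$ gives $\Lb(g) \circ j_d = j_c \circ \Lb(g)$, so $\Lb(g)$ carries $j_d$-fixed points to $j_c$-fixed points and restricts to a map $\Lb^j(g) \colon \Lb^j(d) \to \Lb^j(c)$. This restriction preserves the ambient meets and, by the same naturality, the $j$-joins, hence is a frame homomorphism; it is open with left adjoint $\exists^j_g := j_d \circ \exists_g$, the adjunction and the Frobenius condition following from $j$ being inflationary, idempotent and meet-preserving together with the corresponding properties of $\exists_g$ in $\Lb$. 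This exhibits $\Lb^j$ as a functor $\cat\op \to \Frm_{\rmopen}$. That $\Lb^j$ is moreover a $J$-sheaf is immediate: it is the equalizer of $j, \id \colon \Lb \to \Lb$ in $\sets^{\cat\op}$, and equalizers of $J$-sheaves are again $J$-sheaves. By \cref{intlocaleshf} it then only remains to show that $\Lb^j$ satisfies the relative Beck--Chevalley condition.

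This last point I expect to be the main obstacle. Fix $h \colon e \to d$ and a sieve $S$ on $(d,V)$ in $\cat \rtimes \Lb^j$ with $V = \bigvee^{\Lb^j}_{f \in S}\exists^j_f U_f$; writing $\tilde V := \bigvee_{f \in S}\exists_f U_f \in \Lb_d$ for the join taken in $\Lb$ over the generators $f \colon (c_f,U_f) \to (d,V)$ of $S$, we have $V = j_d(\tilde V)$, and the same family generates a $K_{\Lb}$-covering sieve $\bar S$ of $(d,\tilde V)$ in $\cat \rtimes \Lb$. Applying the relative Beck--Chevalley condition for $\Lb$ to $\bar S$ and $h$, and then naturality $\Lb(h) \circ j_d = j_e \circ \Lb(h)$, yields $h^{-1}(V) = j_e(h^{-1}\tilde V) = j_e\left(\bigvee_{g \in h^\ast \bar S}\exists_g W_g\right)$. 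The task is to identify this with $\bigvee^{\Lb^j}_{g \in h^\ast S}\exists^j_g W_g = j_e\left(\bigvee_{g \in h^\ast S}\exists_g W_g\right)$, i.e. to compare the pullback sieve $h^\ast \bar S$ in $\cat \rtimes \Lb$ with the pullback sieve $h^\ast S$ in $\cat \rtimes \Lb^j$ after applying $j_e$. For one inequality I would send each $g \colon (c,W) \to (e,h^{-1}\tilde V)$ of $h^\ast \bar S$ to the arrow $(c, j_c W) \to (e, h^{-1}V)$, which lies in $h^\ast S$ and satisfies $j_e \exists_g W = j_e \exists_g(j_c W)$ by the identity $j_d \exists_g(j_c -) = j_d \exists_g(-)$ (a direct consequence of the unit of $\exists_g \dashv \Lb(g)$, idempotence, and naturality). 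For the reverse inequality, given $g' \colon (c, W') \to (e, h^{-1}V)$ in $h^\ast S$ I would shrink its domain to $W'' := W' \land g'^{-1}(h^{-1}\tilde V)$, obtaining a member of $h^\ast \bar S$; Frobenius gives $\exists_{g'}W'' = \exists_{g'}W' \land h^{-1}\tilde V$, and since $\exists_{g'}W' \leqslant h^{-1}V$ one computes $j_e \exists_{g'}W'' = j_e \exists_{g'}W' \land h^{-1}V = j_e \exists_{g'}W'$. \Cref{nucleuslemma} then lets me absorb the intermediate applications of $j_e$, so that the two $j_e$-closed joins coincide. The fiddly bookkeeping of these two pullback sieves, rather than any single estimate, is where the real work lies.

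Finally, $\Lb^j$ is an internal locale of $\Sh(\cat,J)$ by \cref{intlocaleshf}. The corestriction $j \colon \Lb \to \Lb^j$ is a natural transformation whose components $j_c \colon \Lb_c \to \Lb^j_c$ are surjective frame homomorphisms, and the two morphism-of-adjunctions identities, namely $\Lb^j(g) \circ j_d = j_c \circ \Lb(g)$ and $\exists^j_g \circ j_c = j_d \circ \exists_g$, are exactly the naturality of $j$ and the identity $j_d \exists_g(j_c -) = j_d \exists_g(-)$ established above. Hence $j$ is the inverse image of an internal locale morphism $\fb \colon \Lb^j \to \Lb$, and since each component is surjective this $\fb$ is an internal locale embedding, as required.
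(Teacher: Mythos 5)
Your proposal is correct and follows essentially the same route as the paper's proof: the pointwise fixed-point frames $\Lb^j_c$, the open restriction maps with left adjoints $j_d \circ \exists_g$, the verification of the relative Beck--Chevalley condition for $\Lb^j$ by reducing to that of $\Lb$ and absorbing nuclei via \cref{nucleuslemma}, and the corestriction $j \colon \Lb \to \Lb^j$ as the surjective inverse image of the embedding. The only divergences are minor: you obtain the $J$-sheaf property from the equalizer presentation of $\Lb^j$ where the paper instead factors $\Sh(\Lb^j) \to \sets^{\cat\op}$ through $\Sh(\cat,J)$ and invokes \cref{intlocaleshf}, and you make explicit the comparison between the pullback sieves computed in $\cat \rtimes \Lb$ and in $\cat \rtimes \Lb^j$, a bookkeeping point that the paper's proof passes over silently.
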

\begin{proof}
	By the above discussion, for each object $c$ of $\cat$, the nucleus $j_c \colon \Lb_c \to \Lb_c$ induces a sublocale $\Lb_c^j$ of $\Lb_c$.  As $j$ is a natural transformation, for each arrow $c \xrightarrow{g} d$ of $\cat$, $g^{-1} \colon \Lb_d\to \Lb_c$ restricts to a function $g^{-1} \colon \Lb_d^j \to \Lb_c^j$ which, by the definition of meets and joins in $\Lb_d^j$ and $\Lb_c^j$, can easily be shown to be a frame homomorphism.  We must therefore show that each $g^{-1} \colon \Lb_d^j \to \Lb_c^j$ is also open.  
	
	A left adjoint is given by $j_d  \exists_{\Lb(g)}$ since, for each $U \in \Lb^j_c$ and $V \in \Lb^j_d$,
	\[
	j_d \exists_{\Lb(g)} U \leqslant V = j_d(V)  \iff \exists_{\Lb(g)} U \leqslant V \iff U \leqslant g^{-1}(V),
	\]
	and furthermore the Frobenius condition is satisfied:
	\[j_d \exists_{\Lb(g)} U \land V = j_d \exists_{\Lb(g)} U \land j_d V = j_d((\exists_{\Lb(g)} U )\land V) = j_d \exists_{\Lb(g)}( U \land g^{-1}(V)).\]
	We thus conclude that each internal nucleus $j$ induces a functor $\Lb^j \colon \cat\op \to \Frm_{\rm open} $.
	
	Moreover, we observe that the square
	\[\begin{tikzcd}
		\Lb_c \ar{d}{j_c} \ar{r}{\exists_{g}} & \Lb_d \ar{d}{j_d} \\
		\Lb_c^{j_c} \ar{r}{j_d \exists_{g}} & \Lb_d^{j_d}
	\end{tikzcd}\]
	commutes.  For each $U \in \Lb_c$, $U \leqslant j_c(U)$ and so $j_d \exists_{g} U \leqslant j_d \exists_{g} j_c(U)$.  Conversely, as $U \leqslant g^{-1} \exists_{g} U$, it follows that
	\begin{align*}
		j_d(U) \leqslant j_d  g^{-1} \exists_g (U) & \implies j_d(U) \leqslant g^{-1} j_c \exists_g (U), \\
		& \implies \exists_g   j_d(U) \leqslant j_c \exists_g (U), \\
		& \implies j_c \exists_g  j_d(U) \leqslant j_c  \exists_g(U).
	\end{align*}
	Therefore, we have a natural transformation $j \colon \Lb \to \Lb^j$ where each component is a surjective frame homomorphism such that $j_d \exists_g j_c = j_d \exists_g$ for each arrow $d \xrightarrow{g} c$ of $\cat$.  Hence, $j$ would define an embedding of internal locales if $\Lb^j$ were also an internal locale of $\sets^{\cat\op}$.

	To show that $\Lb^j$ is an internal locale, it remains only to show that functor $\Lb^j$ satisfies the relative Beck-Chevalley condition.  Let $S$ be a sieve on $(d,V) \in \cat \rtimes \Lb^j$ such that
	\[V = j_d\left(\bigvee_{g \in S} j_d \exists_{\Lb(g)} U \right),\]
	which, by Lemma \ref{nucleuslemma}, is equal to $j_d\left(\bigvee_{g \in S} \exists_{\Lb(g)} U\right)$, and let $e \xrightarrow{h} d$ be an arrow of $\cat$.  For notational convenience, let $W $ denote $ \bigvee_{g \in S} \exists_{\Lb(g)} U$.  Since $\Lb$ is an internal locale of $\Sh(\cat,J)$,
	\[h^{-1}(W) = \bigvee_{g \in h^\ast( S)} \exists_{\Lb(g)} U.\]
	Thus, by Lemma \ref{nucleuslemma}, we have the desired equality
	\[h^{-1}(V) = h^{-1}(j_d(W)) = j_e(h^{-1}(W)) = j_e\left(\bigvee_{g \in h^\ast( S)} \exists_{\Lb(g)} U\right) = j_e\left(\bigvee_{g \in h^\ast( S)} j_e\exists_{\Lb(g)} U\right),\]
	and therefore $\Lb^j$ is an internal locale of $\sets^{\cat\op}$.  Since $\Sh(\Lb^j) \to \sets^{\cat\op}$ factors as
	\[\begin{tikzcd}
		\Sh(\Lb^j) \ar{r} & \Sh(\Lb) \ar{r} & \Sh(\cat,J) \ar[tail]{r} & \Sh(\cat,J),
	\end{tikzcd}\] 
	we conclude that $\Lb^j$ is an internal locale of $\Sh(\cat,J)$ too by Lemma \ref{intlocaleshf}.
\end{proof}

\begin{coro}\label{intsublocandnuclei}
	Let $\Lb \colon \cat\op \to \Frm_{\rm open} $ be an internal locale of $\Sh(\cat,J)$.  There is a bijective correspondence between internal sublocales of $\Lb$ and internal nuclei on $\Lb$.
\end{coro}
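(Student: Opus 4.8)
The plan is to upgrade the classical pointwise bijection between nuclei and sublocales of \cite[Theorem II.2.3]{stone}, recalled above, to the internal setting. Proposition \ref{intnucgivessubloc} already supplies one direction, sending an internal nucleus $j$ on $\Lb$ to the internal sublocale $\Lb^j \hookrightarrow \Lb$; so I would only need to build the reverse assignment and check that the two constructions are mutually inverse.

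For the reverse direction, given an internal locale embedding $\fb \colon \Lb_1 \hookrightarrow \Lb$, I would set $j_c := (\fb_c)_\ast \circ \fb_c^{-1} \colon \Lb_c \to \Lb_c$ for each object $c$ of $\cat$, where $(\fb_c)_\ast$ is the right adjoint to the surjective frame homomorphism $\fb_c^{-1}$. By the set-based correspondence each $j_c$ is a nucleus on the locale $\Lb_c$, so the real content of this step is to show that the family $(j_c)_{c \in \cat}$ is natural, i.e. that it defines a natural transformation $j \colon \Lb \to \Lb$ and hence an internal nucleus.

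I expect this naturality to be the main obstacle, since it is precisely where both clauses of the morphism-of-adjunctions condition defining $\fb$ are needed. Fixing an arrow $c \xrightarrow{g} d$ of $\cat$ and writing $g^{-1} = \Lb(g)$, I would take right adjoints (mates) of the Frobenius identity $\exists_{\Lb_1(g)} \circ \fb_c^{-1} = \fb_d^{-1} \circ \exists_{\Lb(g)}$ to obtain $(\fb_c)_\ast \circ \Lb_1(g) = \Lb(g) \circ (\fb_d)_\ast$, and combine this with the naturality identity $\fb_c^{-1} \circ \Lb(g) = \Lb_1(g) \circ \fb_d^{-1}$ of $\fb^{-1}$:
\[ j_c \circ g^{-1} = (\fb_c)_\ast \fb_c^{-1} \Lb(g) = (\fb_c)_\ast \Lb_1(g) \fb_d^{-1} = \Lb(g) (\fb_d)_\ast \fb_d^{-1} = g^{-1} \circ j_d. \]
This exhibits $j$ as a natural transformation, and so as an internal nucleus on $\Lb$.

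Finally, to see that the two assignments are mutually inverse, I would note that both are computed componentwise: the $c$-component of the sublocale $\Lb^j$ produced by Proposition \ref{intnucgivessubloc} is the set-based sublocale $\Lb_c^{j_c}$, while the nucleus $(\fb_c)_\ast \circ \fb_c^{-1}$ built above is exactly the set-based nucleus attached to the frame surjection $\fb_c^{-1}$. Hence both round-trips reduce, object by object, to the classical bijection of \cite[Theorem II.2.3]{stone}; the resulting componentwise identifications are automatically compatible with the transition maps (by the naturality just established, together with the naturality of $\fb^{-1}$), giving an isomorphism of internal sublocales over $\Lb$ in one direction and an equality of internal nuclei in the other. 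This yields the desired bijective correspondence.
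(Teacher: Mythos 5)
Your proposal is correct and follows essentially the same route as the paper: one direction is delegated to Proposition \ref{intnucgivessubloc}, and the reverse direction constructs the pointwise nuclei $(\fb_c)_\ast \circ \fb_c^{-1}$ and proves their naturality by taking right adjoints (mates) of the morphism-of-adjunctions condition, combined with the naturality of $\fb^{-1}$ — exactly the paper's argument. Your closing observation that the two assignments are mutually inverse because everything restricts pointwise to the classical bijection of \cite[Theorem II.2.3]{stone} matches the paper's remark that its bijection is a restriction of the objectwise correspondence.
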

\begin{proof}
	By the theory of standard locales, there is a bijective correspondence between collections of nuclei
	\[\{\,j_c \colon \Lb_c \to \Lb_c\mid c \in \cat \,\}\]
	and collections of sublocales 
	\[\{\,f_c \colon \Lb_c' \rightarrowtail \Lb_c \mid c \in \cat\,\},\]
	where both are indexed by the objects of $\cat$.  Our bijection will be a restriction of this correspondence.
	
	We have already seen in Proposition \ref{intnucgivessubloc} that if the collection $\{\,j_c \colon \Lb_c \to \Lb_c\mid c \in \cat \,\}$ of nuclei is natural in $c$, i.e. it defines an internal nucleus, then the corresponding collection of sublocales yields an internal sublocale embedding.  It remains to show the other direction: that if $\{\,\fb_c \colon \Lb_c' \rightarrowtail \Lb_c \mid c \in \cat\,\}$ are the components of an internal sublocale embedding, then the corresponding collection of nuclei is natural.

	Let $\fb \colon \Lb' \to \Lb$ be an embedding of an internal sublocale.  Since each component $\fb_c^{-1} \colon  \Lb'(c) \to \Lb(c)$ is surjective, it induces a nucleus ${\fb_\ast}_c \fb^{-1}_c \colon \Lb(c) \to \Lb(c)$, for each object $c$ of $\cat$.  We wish to show that, for each arrow $c \xrightarrow{g} d$ of $\cat$, the square
	\[\begin{tikzcd}
		\Lb_d \ar{d}[']{{\fb_\ast}_d \fb^{-1}_d} \ar{r}{g^{-1}} & \Lb_c \ar{d}{{\fb_\ast}_c \fb^{-1}_c} \\
		\Lb_d \ar{r}{g^{-1}} & \Lb_c
	\end{tikzcd}\]
	commutes.  Since the square 
	\[\begin{tikzcd}
		\Lb_d \ar{d}[']{\fb_d^{-1}} \ar[shift right]{r}[']{g^{-1}} & \Lb_c \ar[shift right]{l}[']{\exists_{g}} \ar{d}{\fb_c^{-1}} \\
		\Lb_d \ar[shift right]{r}[']{g^{-1}} & \Lb_c \ar[shift right]{l}[']{\exists_{g}},
	\end{tikzcd}\]
	is a morphism of adjunctions, taking the respective right adjoints also yields a morphism of adjunctions
	\[\begin{tikzcd}
		\Lb_d \ar[shift left]{r}{g^{-1}} &\ar[shift left]{l}{g_\ast} \Lb_c \\
		\Lb_d \ar{u}{{\fb_\ast}_d} \ar[shift left]{r}{g^{-1}}& \Lb_c. \ar{u}{{\fb_\ast}_c} \ar[shift left]{l}{g_\ast}
	\end{tikzcd}\]
	Hence we have the desired equality
	\[ {\fb_\ast}_c \fb^{-1}_c g^{-1} = {\fb_\ast}_c g^{-1}\fb^{-1}_d = g^{-1} {\fb_\ast}_d \fb^{-1}_d . \]
\end{proof}


\subsection{Geometric embeddings}\label{subsec:geomemb}

We now establish a bijective correspondence between internal nuclei and Lawvere-Tierney topologies, and hence between internal sublocales and subtoposes.  Let $\ftopos$ be a Grothendieck topos.  Recall, from \cite[\S A4.4]{elephant} say, that a \emph{Lawvere-Tierney topology} is a endomorphism $j \colon \Omega_{\ftopos} \to \Omega_{\ftopos}$ on the subobject classifier of the topos $\ftopos$ such that the diagrams
\[\begin{tikzcd}
	\1 \ar{r}{\top} \ar{rd}[']{\top} & \Omega_{\ftopos} \ar{d}{j} & \Omega_{\ftopos} \ar{r}{j} \ar{rd}[']{j} & \Omega_{\ftopos} \ar{d}{j} & \Omega_{\ftopos} \times \Omega_{\ftopos} \ar{d}{j \times j} \ar{r}{\land} & \Omega_{\ftopos} \ar{d}{j} \\
	& \Omega_{\ftopos}, & & \Omega_{\ftopos}, & \Omega_{\ftopos} \times \Omega_{\ftopos} \ar{r}{\land} & \Omega_{\ftopos}
\end{tikzcd}\]
commute.  Recall also that there is a bijection between Lawvere-Tierney topologies and subtoposes of $\ftopos$.  As observed in \cite[Corollary IX.6.6]{SGL}, given a locale $L$, there is a bijection between Lawvere-Tierney topologies on $\Omega_{\Sh(L)}$ (and hence subtoposes of $\Sh(L)$) and nuclei on $L$ (and hence sublocales of $L$).  The following result extends this bijection to the internal setting.

\begin{thm}\label{intsublocthm}
	Let $\Lb \colon \cat\op \to \Frm_{\rm open}$ be an internal locale of $\topos \simeq \Sh(\cat ,J)$.  There is a bijective correspondence between the following:
	\begin{enumerate}
		\item\label{subtopoi} the subtoposes of $\ftopos \simeq \Sh(\Lb)$;
		\item\label{list:nuclei} internal nuclei on $\Lb$;
		\item\label{list:subloc} internal sublocales of $\Lb$.
	\end{enumerate}
	In particular, if $\fb \colon \Lb' \to \Lb$ is an internal locale morphism, $\Sh(\fb)$ is a geometric embedding if and only if $\fb$ is an internal locale embedding.
\end{thm}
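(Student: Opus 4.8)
The plan is to reduce the three-way bijection to the two-step chain $\ref{subtopoi}\leftrightarrow\ref{list:nuclei}\leftrightarrow\ref{list:subloc}$. The equivalence $\ref{list:nuclei}\leftrightarrow\ref{list:subloc}$ between internal nuclei on $\Lb$ and internal sublocales of $\Lb$ is already Corollary \ref{intsublocandnuclei}, so the new content is the bijection $\ref{subtopoi}\leftrightarrow\ref{list:nuclei}$ between subtoposes of $\ftopos\simeq\Sh(\Lb)$ and internal nuclei on $\Lb$. I would factor this through the standard bijection between subtoposes of $\ftopos$ and Lawvere--Tierney topologies on $\Omega_\ftopos$, so that it suffices to produce a bijection between Lawvere--Tierney topologies on $\Omega_{\Sh(\Lb)}$ and internal nuclei on $\Lb$.

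For the latter I would exploit the explicit description of $\Omega_{\Sh(\Lb)}$ from Remark \ref{subobjclassforintloc}, namely that it is the sheaf on $\cat \rtimes \Lb$ with $\Omega_{\Sh(\Lb)}(c,U) \cong \{\,V \in \Lb_c \mid V \leqslant U\,\}$ and transition maps $V \mapsto f^{-1}(V)\land U$. To an internal nucleus $j$ on $\Lb$ (with components $j_c$) I associate the endomorphism $\tilde{j}$ of $\Omega_{\Sh(\Lb)}$ given by $\tilde{j}_{(c,U)}(W) = j_c(W)\land U$; conversely, to a Lawvere--Tierney topology $\tilde{j}$ I associate the family $j_c := \tilde{j}_{(c,\top_c)}$, which is nothing but $\tilde{j}$ transported along the direct image ${C_{p_\Lb}}_\ast = -\circ t_\Lb$, since $t_\Lb(c)=(c,\top_c)$. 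The verifications are routine but rest on three facts: naturality of $\tilde{j}$ in $(c,U)$ follows from naturality of $j$ (i.e.\ $j_c\circ f^{-1}=f^{-1}\circ j_d$) together with the inflationary law $U\leqslant j_c(U)$; the three Lawvere--Tierney axioms for $\tilde{j}$ reduce, via the identity $j_c(U)\land U = U$, to the nucleus axioms for each $j_c$; and the two assignments are mutually inverse, the composite in one direction being immediate from $j_c(W)\land\top_c=j_c(W)$, and in the other from the equation $\tilde{j}_{(c,U)}(W)=\tilde{j}_{(c,\top_c)}(W)\land U$ forced by naturality of $\tilde{j}$ along $\id_c\colon(c,U)\to(c,\top_c)$. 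The main obstacle is precisely this bijection: it is conceptually easy given the explicit $\Omega_{\Sh(\Lb)}$, but it is the one place where the hypothesis that $j$ is a \emph{natural} transformation, rather than merely a levelwise family of nuclei, is genuinely used.

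The final sentence then follows by assembling the three bijections. The forward implication --- if $\fb$ is an internal locale embedding then $\Sh(\fb)$ is a geometric embedding --- is already Proposition \ref{intlocinclusion}. For the converse, if $\Sh(\fb)\colon\Sh(\Lb')\to\Sh(\Lb)$ is a geometric embedding over $\Sh(\cat,J)$, then $\Sh(\Lb')$ is a subtopos of $\Sh(\Lb)$ and hence, by $\ref{subtopoi}\leftrightarrow\ref{list:subloc}$, corresponds to an internal sublocale $\Lb^j\hookrightarrow\Lb$ whose inclusion $\iota$ is an internal locale embedding with $\Sh(\iota)\simeq\Sh(\fb)$. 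By the bijective correspondence of Theorem \ref{catofintloc} between internal locale morphisms and commuting geometric morphisms, $\fb\cong\iota$, so each $\fb_c^{-1}$ is surjective and $\fb$ is an internal locale embedding, as required.
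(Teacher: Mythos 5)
Your proposal is correct and takes essentially the same route as the paper's own proof: it reduces \cref{subtopoi}$\leftrightarrow$\cref{list:nuclei} to Lawvere--Tierney topologies, uses the same two constructions ($j_c := \tilde{j}_{(c,\top_c)}$, i.e.\ transport along ${C_{p_\Lb}}_\ast = -\circ t_\Lb$, and $\tilde{j}_{(c,U)}(W) := j_c(W)\land U$), and the same key mechanism of naturality along $\id_c \colon (c,U)\to(c,\top_c)$ to force both inflationarity and mutual inverseness. Your explicit derivation of the final sentence from Proposition \ref{intlocinclusion} and Theorem \ref{catofintloc} spells out a step the paper leaves implicit, but the substance is identical.
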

\begin{proof}
	The bijective correspondence between internal nuclei and internal sublocales was shown in Corollary \ref{intsublocandnuclei}.  We now demonstrate a bijective correspondence between subtoposes of $\ftopos \simeq \Sh(\Lb)$ and internal nuclei on $\Lb$.  We rely on the characterisation of subtoposes of $\Sh(\Lb)$ in terms of Lawvere-Tierney topologies.
	
	Let $j \colon \Omega_{\Sh(\Lb)} \to \Omega_{\Sh(\Lb)}$ be a Lawvere-Tierney topology and let $f \colon \Sh(\Lb) \to \sets^{\cat\op}$ be the localic geometric morphism such that $f_\ast(\Omega_{\Sh(\Lb)}) \cong \Lb$, i.e. $f \cong C_{p_\Lb}$.  By now applying the direct image functor $f_\ast \colon \Sh(\Lb) \to \sets^{\cat\op}$, we obtain an endomorphism
	\[f_\ast j \colon f_\ast (\Omega_{\Sh(\Lb)}) \cong \Lb \to f_\ast (\Omega_{\Sh(\Lb)}) \cong \Lb.\]
	By the description of ${C_{p_\Lb}}_\ast$ afforded by \cite[Theorem VII.10.2]{SGL}, we have that $(f_\ast j)_c = (j \circ t_\Lb)_c = j_{(c,\top)}$.  We claim that $f_\ast j$ is an internal nucleus.  Since $j$ was a Lawvere-Tierney topology, $f_\ast j$ makes the following diagrams commute:
	\[\begin{tikzcd}\Lb \ar{r}{f_\ast j} \ar{rd}[']{f_\ast j} &  \Lb \ar{d}{f_\ast j} && \Lb \times \Lb \ar{d}[']{f_\ast j \times f_\ast j} \ar{r}{\land} & \Lb \ar{d}{f_\ast j} \\
		& \Lb , && \Lb \times \Lb \ar{r}{\land} & \Lb.
	\end{tikzcd}\]
	Thus, $f_\ast j \colon \Lb \to \Lb$ is a natural transformation such that $(f_\ast j)_c \colon \Lb_c \to \Lb_c$ is idempotent and preserves binary meets, for each $c \in \cat$.  It remains to show that $(f_\ast j)_c$ is inflationary.

	Let $U \in \Lb_c $.  As $j$ is a Lawvere-Tierney topology and natural, there is a commutative diagram of sets
	\[\begin{tikzcd}
		\1(c,U) \ar{r}{\top_{(c,U)}} \ar{rd}[']{\top_{(c,U)}} & \Omega_{\Sh(\Lb)}(c,U) \ar{d}{j_{(c,U)}} & \ar{l}[']{-\land U} \Omega_{\Sh(\Lb)}(c,\top) \ar{d}{(f_\ast j)_c = j_{(c,\top)}}  \\
		& \Omega_{\Sh(\Lb)}(c,U)  & \ar{l}[']{-\land U} \Omega_{\Sh(\Lb)}(c,\top).
			\end{tikzcd}\]
	The displayed morphisms act as follows:
	\begin{enumerate}
		\item the map $\top_{(c,U)} \colon \1(c,U) \to \Omega_{\Sh(\Lb)}(c,U)$ picks out the top element 
		\[U \in \Omega_{\Sh(\Lb)}(c,U) \cong \Sub_{\Sh(\Lb)}(\ell_{\cat \rtimes \Lb}(c,U)),\]
		\item while the map $\Omega_{\Sh(\Lb)}(c,\top) \to \Omega_{\Sh(\Lb)}(c,U)$ is induced by pulling back subobjects along the monomorphism $\ell_{\cat \rtimes \Lb}(c,U) \rightarrowtail \ell_{\cat \rtimes \Lb}(c,\top)$  -- in other words, it sends a subobject $V \in \Omega_{\Sh(\Lb)}(c,\top) \cong \Sub_{\Sh(\Lb)}(\ell_{\cat \rtimes \Lb}(c,\top))$ to $U \land V \in \Omega_{\Sh(L)}(c,U) \cong \Sub_{\Sh(\Lb)}(\ell_{\cat \rtimes \Lb}(c,U))$.
	\end{enumerate}
	Thus, by chasing the element $U \in \Omega_{\Sh(L)}(c,\top)$ through the diagram, we observe that $U \land (f_\ast j)_c(U) = j_U(U) = U$.  Thus, $U \leqslant (f_\ast j)_c(U)$ as desired.

	Conversely, given an internal nucleus $k \colon \Lb \cong f_\ast(\Omega_{\Sh(\Lb)} ) \to \Lb \cong f_\ast(\Omega_{\Sh(\Lb)})$, we define a natural endomorphism $k^f$ on the subobject classifier $\Omega_{\Sh(\Lb)}$, viewed as a sheaf on the site $(\cat \rtimes \Lb, K_\Lb)$, by setting $k^f_{(c,U)} (V)$ as $k_c(V) \land U$, for each $(c,U) \in \cat \rtimes \Lb$ and each $V \in \Omega_{\Sh(\Lb)} (c,U) = \{\, V \in \Lb_c \mid V \leqslant U\,\}$.   We now demonstrate that $k^f$ defines an Lawvere-Tierney topology.
	
	As $k$ is an internal nucleus, by a simple diagram chase it is clear that, for each object $(c,U) \in \cat \rtimes \Lb$, the diagrams
	\[\begin{tikzcd}
		\1(c,U) \ar{r}{\top_{(c,U)}} \ar{rd}[']{\top_{(c,U)}} & \Omega_{\Sh(\Lb)}(c,U) \ar{d}{k^f_{(c,U)}} & \Omega_{\Sh(\Lb)}(c,U) \ar{r}{k^f_{(c,U)}} \ar{rd}[']{k^f_{(c,U)}} & \Omega_{\Sh(\Lb)}(c,U) \ar{d}{k^f_{(c,U)}} \\ 
		& \Omega_{\Sh(\Lb)}(c,U), && \Omega_{\Sh(\Lb)}(c,U),
	\end{tikzcd}\]
	\[\begin{tikzcd}
		\Omega_{\Sh(\Lb)} \times \Omega_{\Sh(\Lb)}(c,U) \ar{r}{\land} \ar{d}[']{k^f_{(c,U)} \times k^f_{(c,U)}} & \Omega_{\Sh(\Lb)}(c,U) \ar{d}{k^f_{(c,U)}} \\
		\Omega_{\Sh(\Lb)} \times \Omega_{\Sh(\Lb)}(c,U) \ar{r}{\land}  & \Omega_{\Sh(\Lb)}(c,U) 
	\end{tikzcd}\]
	all commute.  It remains to observe that $k^f$ is natural.  Since each arrow $(c,U) \xrightarrow{g} (d,V)$ of $\cat \rtimes \Lb$ can be factored as 
	\[(c,U) \xrightarrow{\id_c} (c,g^{-1} (V)) \xrightarrow{g} (d,V) ,\] 
	it suffices to show that both squares in the diagram
	\[\begin{tikzcd}
		\Omega_{\Sh(\Lb)}(d,V) \ar{r} \ar{d}{k^f_{(d,V)}} & \Omega_{\Sh(\Lb)}(c,g^{-1}(V)) \ar{r} \ar{d}{k^f_{(c,g^{-1}(V))}} & \Omega_{\Sh(\Lb)}(c,U) \ar{d}{k^f_{(c,U)}}\\
		\Omega_{\Sh(\Lb)}(d,V) \ar{r} & \Omega_{\Sh(\Lb)}(d,g^{-1}(V)) \ar{r} & \Omega_{\Sh(\Lb)}(c,U) 
	\end{tikzcd}\]
	commute.  
	\begin{enumerate}
		\item The left-hand square commutes since, for each $W \in \Lb_d$,
		\[k_c(g^{-1}(W)) \land g^{-1}(V) = g^{-1}(k_d(W)) \land g^{-1}(V) = g^{-1}(k_d(W) \land V).\]
		\item Meanwhile, the right-hand square commutes since, for each $W \in \Lb_c$,
		\[k_c(W \land U) \land U = k_c(W) \land U = k_c(W\land g^{-1}(V)) \land U .\]
	\end{enumerate}

	Finally, the bijection is completed by noting that the two constructions are mutually inverse.  That is, for each $c \in \cat$ and $U,V \in \Lb_c$,
	\[(f_\ast k^f)_c(V) = k^f_{(c,\top)}(V) = k_c(V) \land \top = k_c(V) \]
	and
	\[(f_\ast j)^f_{(c,U)}(V) = j_{(c,\top)}(V) \land U = j_{(c,U)}(V),\]
	for each internal nucleus $k$ on $\Lb$ and each Lawvere-Tierney topology $j$ on $\Omega_{\Sh(\Lb)}$.
\end{proof}

\subsection{The surjection-inclusion factorisation}\label{subsec:surj_incl}

Recall that every locale morphism $f \colon L \to K$ can be factored uniquely (up to isomorphism) as a surjection of locales followed by an inclusion of locales (see \cite[\S IX.4]{SGL}).  The same is true for geometric morphisms: every geometric morphism $f \colon \ftopos \to \topos$ can be factored as a geometric surjection composed with an inclusion of a subtopos (see \cite[Theorem A4.2.10]{elephant}).  If $f$ is induced by an internal locale morphism, a simple application of Proposition \ref{prop:surjintlocmorph} and Theorem \ref{intsublocthm} yields the following.

\begin{coro}
	Let $\fb \colon \Lb' \to \Lb$ be an internal locale morphism of $\Sh(\cat,J)$.  The surjection-inclusion factorisation of the geometric morphism $\Sh(\fb) \colon \Sh(\Lb') \to \Sh(\Lb)$ is induced by the `pointwise' surjection-inclusion factorisation of $\fb$.
\end{coro}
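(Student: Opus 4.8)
The plan is to build the factorisation on the side of internal locales and then transport it across the equivalence of Theorem~\ref{catofintloc}, letting the uniqueness of the surjection--inclusion factorisation of a geometric morphism do the bookkeeping. The two facts I would lean on are Proposition~\ref{prop:surjintlocmorph} (a surjective internal locale morphism induces a geometric surjection, and conversely) and Theorem~\ref{intsublocthm} (internal locale embeddings correspond to subtoposes, i.e.\ to geometric embeddings).

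First I would take the topos-theoretic surjection--inclusion factorisation of $\Sh(\fb)$ (see \cite[Theorem A4.2.10]{elephant}),
\[ \Sh(\Lb') \xrightarrow{\ q\ } \topos'' \xrightarrow{\ m\ } \Sh(\Lb), \]
with $q$ a surjection and $m$ an inclusion. Since $\topos''$ is a subtopos of $\Sh(\Lb)$, Theorem~\ref{intsublocthm} presents it as $\topos'' \simeq \Sh(\Lb'')$ for a unique internal sublocale $\mathfrak{i} \colon \Lb'' \hookrightarrow \Lb$, with $m = \Sh(\mathfrak{i})$ and each component $\mathfrak{i}_c^{-1} \colon \Lb_c \to \Lb''_c$ surjective. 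This is the step that secures, for free, the otherwise delicate point that the candidate $\Lb''$ satisfies the relative Beck--Chevalley condition: I do not have to verify it by hand, since Theorem~\ref{intsublocthm} hands me $\Lb''$ already as a bona fide internal locale.

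Next I would check that the surjective factor $q$ lies over $\Sh(\cat,J)$. Because $m = \Sh(\mathfrak{i})$ commutes over the base, $C_{p_{\Lb''}} = C_{p_{\Lb}} \circ m$, whence $C_{p_{\Lb''}} \circ q = C_{p_{\Lb}} \circ m \circ q = C_{p_{\Lb}} \circ \Sh(\fb) = C_{p_{\Lb'}}$. Thus $q$ satisfies the commutativity hypothesis of Proposition~\ref{geoinducesmorph} (which, note, does not demand localicity of $q$), so $q = \Sh(\mathfrak{p})$ for a unique internal locale morphism $\mathfrak{p} \colon \Lb' \to \Lb''$; and since $q$ is a surjection, Proposition~\ref{prop:surjintlocmorph} makes $\mathfrak{p}$ a surjective internal locale morphism, i.e.\ each $\mathfrak{p}_c^{-1} \colon \Lb''_c \to \Lb'_c$ is injective. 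Faithfulness of the equivalence in Theorem~\ref{catofintloc} then forces $\fb = \mathfrak{i} \circ \mathfrak{p}$, since $\Sh(\mathfrak{i} \circ \mathfrak{p}) = m \circ q = \Sh(\fb)$.

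It remains to recognise this internal factorisation as the pointwise one. Reading off components, $\fb_c^{-1} = \mathfrak{p}_c^{-1} \circ \mathfrak{i}_c^{-1}$ exhibits the frame homomorphism $\fb_c^{-1}$ as the surjection $\mathfrak{i}_c^{-1}$ followed by the injection $\mathfrak{p}_c^{-1}$; by uniqueness of the image factorisation of a frame homomorphism this is exactly $\Lb_c \twoheadrightarrow \fb_c^{-1}(\Lb_c) \rightarrowtail \Lb'_c$, so $\Lb''_c \cong \fb_c^{-1}(\Lb_c)$ with its subframe structure --- precisely the surjection--inclusion factorisation of the locale morphism $\fb$ in each fibre. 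Applying $\Sh(-)$ recovers $\Sh(\fb) = \Sh(\mathfrak{i}) \circ \Sh(\mathfrak{p})$ with $\Sh(\mathfrak{p})$ surjective and $\Sh(\mathfrak{i})$ an embedding, which by uniqueness of the surjection--inclusion factorisation of a geometric morphism is the one we began with. The main obstacle here is conceptual rather than computational: one must resist verifying the relative Beck--Chevalley condition for the pointwise image directly, and instead extract $\Lb''$ as an internal locale from Theorem~\ref{intsublocthm}, after which the remaining checks (that $q$ lies over the base, and uniqueness of image factorisations in each fibre) are routine.
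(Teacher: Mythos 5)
Your proposal is correct and follows essentially the same route as the paper: take the geometric surjection--inclusion factorisation of $\Sh(\fb)$, use Theorem~\ref{intsublocthm} and Proposition~\ref{prop:surjintlocmorph} to recognise both factors as induced by an internal embedding and a surjective internal locale morphism respectively, and then invoke the pointwise nature of these notions together with uniqueness of the fibrewise factorisation. Your write-up merely makes explicit some steps the paper leaves implicit (the commutativity of the surjective factor over the base via Proposition~\ref{geoinducesmorph}, and the faithfulness argument giving $\fb = \mathfrak{i} \circ \mathfrak{p}$), which is a sound elaboration rather than a different method.
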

\begin{proof}
	Let
	\[
	\begin{tikzcd}
		\Sh(\Lb') \ar[two heads]{r} & \Sh(\Lb^{\fb_\ast \fb^{-1}}) \ar[tail]{r} & \Sh(\Lb)
	\end{tikzcd}
	\]
	denote the surjection inclusion factorisation of $\Sh(\fb)$.  By Proposition \ref{prop:surjintlocmorph} and Theorem \ref{intsublocthm}, the factor $\Sh(\Lb') \twoheadrightarrow \Sh(\Lb^{\fb_\ast \fb^{-1}})$ is induced by a surjective internal locale morphism $\Lb' \twoheadrightarrow \Lb^{\fb_\ast \fb^{-1}}$, while $\Sh(\Lb^{\fb_\ast \fb^{-1}}) \rightarrowtail \Sh(\Lb)$ is induced by an internal embedding of locales $\Lb^{\fb_\ast \fb^{-1}} \rightarrowtail \Lb$.  Since internal surjections and embeddings are computed `pointwise', the component at $c \in \cat$ of these internal locale morphisms must agree with the `pointwise' surjection-inclusion factorisation of the locale morphism $\fb_c \colon \Lb'_c \to \Lb_c$.
\end{proof}


\section{An application to categorical logic}\label{sec:subtoposes_and_quotients}

The perspective afforded to topos theory by internal locales can be a powerful tool.  As a demonstration, using the theory of internal sublocales developed above, we give an elegant proof of the well-known fact that the \emph{quotient theories} of a geometric theory are in equivalence with the subtoposes of the \emph{classifying topos} of the theory (see \cite[Examples B4.2.8(i)]{elephant} and \cite[Theorem 3.2.5]{TST}).

\subsection{Geometric theories as internal locales}

We first recall that geometric first-order theories can be understood as internal locales over certain base categories.  For an introduction to geometric first-order logic, the reader is directed to \cite[\S D1]{elephant}.  

\begin{df}
	Let $\Sigma$ be a signature.  We denote by $\Con$ the \emph{category of contexts} for $\Sigma$, the category:
	\begin{enumerate}
		\item whose objects are contexts $\vec{x}$, i.e. finite tuples of free variables,
		\item and whose arrows are \emph{relabellings} $\sigma \colon \vec{y} \to \vec{x}$, i.e. functions of finite tuples that preserve sorts of the variables.
	\end{enumerate} 
\end{df}

If $\Sigma$ is a single sorted signature, then $\Con \simeq \Finsets$.  More generally, if $\Sigma$ has $N$ many sorts, then $\Con$ is the full subcategory of $N \times \Finsets$ on objects $(Z_k)_{k \in N}$ where only finitely many $Z_k$ are non-empty.  Note that $\Con$ has all pushouts (these are computed pointwise).

\begin{df}\label{df:FT}
	Let $\theory$ be a single-sorted, geometric theory over a signature $\Sigma$.  We denote by
	\[ F^\theory \colon \Con \to \Frm \]
	the functor that acts as follows.
	\begin{enumerate}
		\item For each context $\vec{x}$, $F^\theory(\vec{x})$ is the frame:
		\begin{enumerate}
			\item whose elements are $\theory$-provable equivalence classes of formulae in the context $\vec{x}$ -- written as $\form{\varphi}{x}_\theory$,
			\item and the order relation is given by provability in $\theory$, i.e. $\form{\varphi}{x}_\theory \leqslant \form{\psi}{x}_\theory$ if and only if $\theory$ proves the sequent $\varphi \vdash_{\vec{x}} \psi$.
		\end{enumerate}
		\item For each relabelling $\sigma \colon \vec{y} \to \vec{x}$, the frame homomorphism $F^\theory(\sigma)$ acts by sending a formula $\form{\psi}{y}_\theory \in F^\theory(\vec{y})$ to the formula $\form{\psi[\vec{x}/_\sigma \vec{y}]}{x}_\theory$, the formula obtained by simultaneously replacing each instance of the variable $y_i \in \vec{y}$ by $\sigma(y_i) \in \vec{x}$ (since contexts are assumed to be disjoint, we can simultaneously replace free variables).
	\end{enumerate}
\end{df}

This represents the theory $\theory$ as a \emph{doctrine} in the style of Lawvere \cite{adjoint}.  We can consult \cite{seely} to further deduce that, for each relabelling $\sigma \colon \vec{y} \to \vec{x}$, the map $F^\theory(\sigma)$ has a left adjoint $\exists_{F^\theory(\sigma)} \colon F^\theory(\vec{x}) \to F^\theory(\vec{y})$.  This is the function which sends $\form{\varphi}{x}_\theory \in F^\theory(\vec{x})$ to the element
\[
\textstyle \left(\,\vec{y} \, , \, \exists \vec{x} \ \varphi \land \bigwedge_{y_i \in \vec{y}} y_i = \sigma(\vec{y_i}) \, \right)_\theory \in F^\theory(\vec{y}).
\]
Moreover, these left adjoints satisfy the Frobenius and Beck-Chevalley conditions.  Therefore, $F^\theory$ defines an internal locale of $\sets^{\Con}$.  Indeed, by \cite[\S 6]{seely}, every internal locale of $\sets^{\Con}$ is, up to isomorphism, of the form $F^{\theory'}$ for some geometric theory $\theory'$.  This is precisely the observation, in the single-sorted case, of \cite[Theorem D3.2.5]{elephant}, and can also be deduced from the theory of \emph{localic expansions} of \cite[\S 7]{TST}.

\subsection{Quotient theories}

We now turn to proving the equivalence between quotient theories and subtoposes found in \cite[Examples B4.2.8(i)]{elephant} and \cite[Theorem 3.2.5]{TST}.  In the author's estimation, the perspective of internal locales yields the simplest demonstration of this equivalence.

First, recall that the \emph{classifying topos} of the geometric theory $\theory$ is a topos $\topos_\theory$ for which there is a natural equivalence
\[
\Tmod(\ftopos) \simeq \Geom(\ftopos,\topos_\theory)
\]
between the category of models of $\theory$ in a topos $\ftopos$, and their homomorphisms, and the geometric morphisms from $\ftopos$ into $\topos_\theory$ and their natural transformations.  Intuitively, this expresses that, if a topos is a `generalised space', then the classifying topos of a theory is the `generalised space' whose points are models of the theory.  Recall also that the topos $\Sh(F^\theory)$ of internal sheaves on $F^\theory$ classifies $\theory$ in that there is an equivalence $\Sh(F^\theory) \simeq \topos_\theory$ (see \cite{myself-presentation}).

\begin{df}
	Let $\theory$ be a geometric theory over a signature $\Sigma$.
	\begin{enumerate}
		\item A \emph{quotient theory} of $\theory$ is a geometric theory $\theory'$ over the same signature $\Sigma$ as $\theory$ and which contains the axioms of $\theory$.
		
		\item Two quotient theories $\theory', \theory''$ of $\theory$ are said to be \emph{syntactically equivalent}, written as $\theory' \equiv_s \theory''$, if the axioms of $\theory'$ are provable by the theory $\theory''$ and vice-versa.
	\end{enumerate}
\end{df}

\begin{coro}
	Let $\theory$ be a geometric theory.  There is a bijective correspondence between the $\equiv_s$-equivalence classes of quotient theories of $\theory$ and the subtoposes of $\topos_\theory$.
\end{coro}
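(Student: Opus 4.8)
The plan is to realise the desired bijection as the composite of two correspondences. The first is supplied directly by our earlier work: since $\topos_\theory \simeq \Sh(F^\theory)$, Theorem \ref{intsublocthm} furnishes a bijection between the subtoposes of $\topos_\theory$ and the internal sublocales of $F^\theory$. It therefore remains to establish a bijection between internal sublocales of $F^\theory$ and $\equiv_s$-classes of quotient theories of $\theory$, and I would set up maps in both directions and check they are mutually inverse.

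For the forward direction, given a quotient theory $\theory'$ of $\theory$, note that since $\theory'$ proves every sequent that $\theory$ does, the assignment $\form{\varphi}{x}_\theory \mapsto \form{\varphi}{x}_{\theory'}$ is a well-defined, order-preserving, surjective frame homomorphism $F^\theory(\vec{x}) \to F^{\theory'}(\vec{x})$ at each context $\vec{x}$. These assemble into a natural transformation because substitution is computed identically in $\theory$ and $\theory'$, and each component is open since the left adjoints $\exists_{F^\theory(\sigma)}$ of Definition \ref{df:FT} are given by the same syntactic existential quantification. This data is precisely an internal locale embedding $F^{\theory'} \rightarrowtail F^\theory$. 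Two quotient theories $\theory', \theory''$ induce the same internal sublocale exactly when these quotient maps have the same kernel at every context, which happens if and only if $\theory'$ and $\theory''$ prove the same sequents, i.e. $\theory' \equiv_s \theory''$; so the forward map is well-defined and injective on $\equiv_s$-classes.

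For the reverse direction, I would start from an internal sublocale $\fb \colon \Lb' \rightarrowtail F^\theory$, with surjective components $\fb^{-1}_{\vec{x}} \colon F^\theory(\vec{x}) \to \Lb'(\vec{x})$, and define a quotient theory $\theory_\fb$ by adjoining to $\theory$ every geometric sequent $\varphi \vdash_{\vec{x}} \psi$ for which $\fb^{-1}_{\vec{x}}(\form{\varphi}{x}_\theory) \leqslant \fb^{-1}_{\vec{x}}(\form{\psi}{x}_\theory)$ in $\Lb'(\vec{x})$. This is a geometric theory containing $\theory$, hence a quotient theory. The crux is to show that the canonical quotient $F^\theory(\vec{x}) \to F^{\theory_\fb}(\vec{x})$ coincides with $\fb^{-1}_{\vec{x}}$, equivalently that $\theory_\fb$ proves $\varphi \vdash_{\vec{x}} \psi$ \emph{precisely} when $\fb^{-1}_{\vec{x}}(\form{\varphi}{x}_\theory) \leqslant \fb^{-1}_{\vec{x}}(\form{\psi}{x}_\theory)$. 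One inclusion holds by construction; for the other I would argue that $\fb^{-1}$, being a natural family of open frame homomorphisms that sends each adjoined axiom to a valid inequality, respects every rule of geometric deduction, so that $\theory_\fb$-provability can never force a strict comparison. Checking that the two assignments are mutually inverse then reduces to matching kernels: reading $\theory_\fb$ off the sublocale arising from a quotient theory $\theory'$ returns exactly the sequents provable in $\theory'$, so $\theory_\fb \equiv_s \theory'$, while starting from $\Lb'$ the identification $F^{\theory_\fb} \cong \Lb'$ over $F^\theory$ follows once the kernels agree (here one may also invoke the classification of internal locales of $\sets^{\Con}$ from \cite[\S 6]{seely}, which already guarantees $\Lb'$ is of the form $F^{\theory'}$).

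I expect the main obstacle to be exactly this provability-matching in the reverse direction: verifying that $\theory_\fb$-provability is no coarser than the comparisons detected by $\fb^{-1}$. This is where the full strength of $\fb$ being a \emph{morphism of internal locales} is needed -- naturality of $\fb^{-1}$ encodes compatibility with substitution (and hence the Beck--Chevalley condition governing the quantifier rules), while the openness of each component, together with the Frobenius condition, encodes that $\fb^{-1}$ commutes with the existential quantifiers $\exists_{F^\theory(\sigma)}$ and their interaction with conjunction. Granting these, a routine induction on geometric derivations shows $\fb^{-1}$ is monotone with respect to $\theory_\fb$-provability, closing the argument.
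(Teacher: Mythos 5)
Your proposal is correct and takes essentially the same route as the paper: the same forward map (the pointwise syntactic quotients $F^\theory(\vec{x}) \twoheadrightarrow F^{\theory'}(\vec{x})$ assembling into an internal sublocale embedding $F^{\theory'} \rightarrowtail F^\theory$, hence a subtopos via Theorem~\ref{intsublocthm}) and the same reverse map (the quotient theory $\theory_\fb$ axiomatised by all sequents $\varphi \vdash_{\vec{x}} \psi$ with $\fb^{-1}_{\vec{x}}(\form{\varphi}{x}_\theory) \leqslant \fb^{-1}_{\vec{x}}(\form{\psi}{x}_\theory)$). You are in fact more explicit than the paper about the provability-matching/soundness induction that identifies $F^{\theory_\fb}$ with the given sublocale (the paper only records $\theory_{\Sh(\mathfrak{e}^{\theory'}_\theory)} \equiv_s \theory'$); the one slip is terminological -- what makes $\fb^{-1}$ commute with the existential quantifiers is the morphism-of-adjunctions clause in the definition of an internal locale morphism, not ``openness of each component'', since the components of $\fb$ are frame homomorphisms that need not be open.
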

\begin{proof}
	If $\theory'$ is a quotient theory of $\theory$, then the map
	\begin{align*}
		{\mathfrak{e}^{\theory'}_\theory}_{\vec{x}}^{-1} \colon F^\theory(\vec{x}) & \to F^{\theory'}(\vec{x}) , \\
		 \form{\varphi}{x}_\theory & \mapsto \form{\varphi}{x}_{\theory'}
	\end{align*}
	that sends the $\theory$-provable equivalence class of a formula $\varphi$ in context $\vec{x}$ to the $\theory'$-provable equivalence class of the same formula yields a surjective map $F^\theory(\vec{x}) \twoheadrightarrow F^{\theory'}(\vec{x})$.  Moreover, since ${\mathfrak{e}^{\theory'}_\theory}_{\vec{x}}^{-1}$ preserves the interpretation of the logical symbols $\left\{\,\top,\land,\bigvee\,\right\}$, the map ${\mathfrak{e}^{\theory'}_\theory}_{\vec{x}}^{-1}$ is a frame homomorphism.  Additionally, it is easily observed that ${\mathfrak{e}^{\theory'}_\theory}_{\vec{x}}^{-1}$ is natural with respect to the maps $F^\theory(\sigma)$ and $\exists_{F^\theory(\sigma)}$ since ${\mathfrak{e}^{\theory'}_\theory}_{\vec{x}}^{-1}$ preserves substitution and the interpretation of the logical symbols $\left\{\,=, \exists \,\right\}$.  Therefore, the maps ${\mathfrak{e}^{\theory'}_\theory}_{\vec{x}}^{-1}$ are components of an internal sublocale embedding ${\mathfrak{e}^{\theory'}_\theory} \colon F^{\theory'} \rightarrowtail F^\theory$ and thus $\theory'$ yields a subtopos 
	\[\Sh({\mathfrak{e}^{\theory'}_\theory}) \colon \Sh(F^{\theory'}) \rightarrowtail \Sh(F^\theory) \simeq \topos_\theory.\]
	
	For the converse, a subtopos $f \colon \ftopos \rightarrowtail  \Sh(F^\theory) \simeq \topos_\theory$ must be induced by an internal locale morphism, i.e. an internal locale morphism $\fb \colon \Lb \to F^\theory$ where, for each $\vec{x}$, the component frame homomorphism $\fb_{\vec{x}}^{-1} \colon F^\theory(\vec{x}) \to \Lb(\vec{x})$ is surjective.  Let $\theory_f$ be the quotient theory of $\theory$ whose axioms consist of the sequents
	\[
	\varphi \vdash_{\vec{x}} \psi 
	\]
	for each pair of formulae $\varphi, \psi$ for which $\fb_{\vec{x}}^{-1}(\form{\varphi}{x}_\theory) \leqslant \fb_{\vec{x}}^{-1}(\form{\psi}{x}_\theory)$.  To complete the proof, we need only note that $\theory_{\Sh\left({\mathfrak{e}^{\theory'}_\theory}\right)} \equiv_s \theory'$.
\end{proof}




\section{The frame of internal nuclei}\label{frameintnuc}

In this final section, we consider the poset of internal nuclei on an internal locale, and demonstrate that it forms a frame whose frame operations can be computed `pointwise'.

Let $L$ be a locale and let $N(L)$ denote the set of nuclei on $L$.  We can order $N(L)$ by setting $j \leqslant k$ if $j(U) \leqslant k(U)$ for all $U \in L$.  Recall, from \cite[Proposition II.2.5]{stone} say, that so ordered $N(L)$ is a frame.  The set of sublocales of $L$, written as $\Sub_\Loc(L)$, can also be ordered with $[K \rightarrowtail L] \leqslant [K' \rightarrowtail L]$ if and only if there is a factorisation
\[\begin{tikzcd}[column sep=tiny]
	K \ar[tail]{rd}  \ar{rr} && K' \ar[tail]{ld} \\
	& L. &
\end{tikzcd}\]
Under the bijection between nuclei and sublocales, this is precisely the order dual $N(L) \cong \Sub_\Loc(L)\op$, and hence $\Sub_\Loc(L)$ is a co-frame.

\begin{dfs}
		Let $\Lb \colon \cat\op \to \Frm_{\rm open}$ be an internal locale of $\Sh(\cat,J)$, and let $\topos$ be a topos.
		\begin{enumerate}
			\item By $N(\Lb)$ we denote the poset of internal nuclei on $\Lb$ ordered by $j \leqslant k$ if and only if, for each $c \in \cat$ and $U \in \Lb_c$, $j_c(U) \leqslant k_c(U)$.
			\item By ${\bf LT}(\topos)$ we denote the poset of Lawvere-Tierney topologies for $\topos$, ordered by $j \leqslant k$ if and only if $j= j \land k$ (this poset is denoted as ${\bf Lop}(\topos)$ in \cite[\S A4.5]{elephant}).
			\item By $\Sub_{\Topos}(\topos)$ we denote the poset of subtoposes of $\topos$ which have been ordered by $[\ftopos' \rightarrowtail \topos]\leqslant [\ftopos \rightarrowtail \topos]$ if and only if there is a factorisation of geometric morphisms
			\[\begin{tikzcd}[column sep = tiny]
				\ftopos' \ar[tail]{rd} \ar{rr} && \ftopos \ar[tail]{ld} \\
				& \topos.&
			\end{tikzcd}\]
			\item By $\Sub_{\Loc(\Sh(\cat,J))}(\Lb)$ we denote the poset of internal sublocales of $\Lb$ ordered by $[\Lb' \rightarrowtail \Lb] \leqslant [\Lb'' \rightarrowtail \Lb]$ if and only if there is a factorisation of internal locale morphisms
			\[\begin{tikzcd}[column sep = tiny]
				\Lb' \ar[tail]{rd} \ar{rr} && \Lb'' \ar[tail]{ld} \\
				& \Lb.&
			\end{tikzcd}\]
		\end{enumerate}
\end{dfs}

Under the bijections established in Theorem \ref{intsublocthm}, there is an isomorphism of posets:
\[N(\Lb) \cong {\bf LT}(\Sh(\Lb)) \cong \Sub_{\Topos}(\Sh(\Lb))\op  \cong \Sub_{\Loc(\Sh(\cat,J))}(\Lb)\op\]
(where the latter two posets are the order duals of $\Sub_{\Topos}(\Sh(\Lb))$ and $\Sub_{\Loc(\Sh(\cat,J))}(\Lb)$ respectively).  We know already that $\Sub_{\Topos}(\Sh(\Lb))$ is a complete co-Heyting algebra, i.e. a co-frame (see \cite[\S A4.5]{elephant}).  We will give an alternative proof using internal nuclei that $N(\Lb)$ is a frame.

Moreover, we will show that the frame operations of $N(\Lb)$ can be computed `pointwise'.  That is, for each subset $\{\,j^i \mid i \in I\,\} \subseteq N(\Lb)$ and each object $c$ of $\cat$, there are equalities
\[\left(\bigwedge_{i \in I} j^i\right)_c = \bigwedge_{i \in I} j^i_c, \ \ \left(\bigvee_{i \in I} j^i\right)_c = \bigvee_{i \in I} j^i_c,\]
where $\bigwedge_{i \in I} j^i_c$ and $\bigvee_{i \in I} j^i_c$ are the meets and joins as computed in $N(\Lb_c)$.  The first of these equalities is easily shown.

\begin{lem}\label{lem:meetnucl}
	The meet of a subset $\{\,j^i \mid i \in I\,\} \subseteq N(\Lb)$ is given by
	\begin{equation}\label{meetnucleq}
		\left(\bigwedge_{i \in I} j^i\right)_c(U) = \bigwedge_{i \in I} j^i_c(U),
	\end{equation}
	for each $c \in \cat$ and $U \in \Lb_c$.
\end{lem}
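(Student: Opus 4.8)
The plan is to exhibit the pointwise formula \cref{meetnucleq} as an internal nucleus and then check directly that it realises the infimum of $\{\,j^i \mid i \in I\,\}$ in $N(\Lb)$. So first I would define $j \colon \Lb \to \Lb$ by setting $j_c(U) = \bigwedge_{i \in I} j^i_c(U)$ for each object $c$ of $\cat$ and each $U \in \Lb_c$, the meet being taken in the frame $\Lb_c$.

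Next I would verify that each component $j_c$ is a nucleus on $\Lb_c$. This is the classical fact that nuclei are closed under pointwise meets (see \cite[Proposition II.2.5]{stone}), and only needs a brief check: inflationarity is immediate from $U \leqslant j^i_c(U)$; meet-preservation rests on the lattice identity $\bigwedge_{i \in I}(a_i \land b_i) = \left(\bigwedge_{i \in I} a_i\right) \land \left(\bigwedge_{i \in I} b_i\right)$; and idempotence follows because $j_c(U) \leqslant j^i_c(U)$ together with the monotonicity and idempotence of $j^i_c$ gives $j^i_c(j_c(U)) \leqslant j^i_c(U)$, whence $j_c(j_c(U)) \leqslant \bigwedge_{i} j^i_c(U) = j_c(U)$.

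The substantive step — and the only place where the hypotheses on $\Lb$ are genuinely used — is naturality, namely that $g^{-1} \circ j_d = j_c \circ g^{-1}$ for every arrow $c \xrightarrow{g} d$ of $\cat$. Expanding both sides and using the naturality of each $j^i$, this reduces to showing that $g^{-1}$ commutes with the meet $\bigwedge_{i \in I}$, i.e. that $g^{-1}\left(\bigwedge_{i \in I} j^i_d(V)\right) = \bigwedge_{i \in I} g^{-1}(j^i_d(V))$ for all $V \in \Lb_d$. A general frame homomorphism preserves only \emph{finite} meets, so here I would invoke that $\Lb$ takes values in $\Frm_{\rmopen}$: the transition map $g^{-1} = \Lb(g)$ is open and hence admits the left adjoint $\exists_g$. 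Being a right adjoint, $g^{-1}$ preserves arbitrary meets, which gives the required equality. I expect this to be the main obstacle, in that it is the one point at which the statement would fail for a mere functor into $\Frm$ rather than $\Frm_{\rmopen}$.

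Finally I would confirm that $j$ is the greatest lower bound in $N(\Lb)$. It is a lower bound, since $j_c(U) = \bigwedge_{i} j^i_c(U) \leqslant j^k_c(U)$ for each $k \in I$; and it is the greatest such, since any internal nucleus $k'$ with $k' \leqslant j^i$ for all $i$ satisfies $k'_c(U) \leqslant \bigwedge_{i} j^i_c(U) = j_c(U)$ at every $c$ and $U$, so $k' \leqslant j$. Hence $j = \bigwedge_{i \in I} j^i$ in $N(\Lb)$ and the meet is computed pointwise, as claimed.
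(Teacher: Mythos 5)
Your proposal is correct and follows essentially the same route as the paper's proof: form the pointwise meet, invoke the classical fact (\cite[Proposition II.2.5]{stone}) that each component is a nucleus, and establish naturality from the openness of the transition maps $\Lb(g)$ — being right adjoints to $\exists_g$, they preserve arbitrary meets. The only difference is one of detail: you spell out the nucleus axioms and the greatest-lower-bound verification, which the paper treats as immediate.
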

\begin{proof}
	If (\ref{meetnucleq}) defines a valid internal nucleus on $\Lb$, it must clearly be the meet of the subset $\{\,j^i \mid i \in I\,\} \subseteq N(\Lb)$.  Recall from \cite[Proposition II.2.5]{stone} that $\bigwedge_{i \in I} j^i_c$ yields a nucleus on $\Lb_c$, for each object $c \in \cat$.  We must show naturality.  As $g^{-1} \colon \Lb_d \to \Lb_c$ is open, for an arrow $c \xrightarrow{g} d$ of $\cat$, it preserves all meets and so
	\[g^{-1}\left( \bigwedge_{i \in I} j^i_d(U)\right) =  \bigwedge_{i \in I} g^{-1}j^i_d(U) =  \bigwedge_{i \in I} j^i_cg^{-1}(U).\]
	Thus, $\bigwedge_{i \in I} j^i$ defines an internal nucleus on $\Lb$. 
\end{proof}

We will demonstrate that $N(\Lb)$ is a frame by generalising the notion of a pre-nucleus on a locale, recalled below, to the internal setting.

We give some justification as to why the frame operations can be computed `pointwise' as described in Theorem \ref{NLisframe} below.  Recall that the subtoposes of $\Sh(\dcat,J)$ correspond to Grothendieck topologies $J'$ on $\dcat$ that contain $J$.  In the case of a Grothendieck topology $J'$ on $\cat \rtimes \Lb$ that contains $K_\Lb$, we observe that the added data is generated by new covering families on the fibres $\Lb_c$.  Specifically, adding a new covering family
\[\{\,(c_i,U_i) \xrightarrow{f_i} (c,U) \mid i \in I\,\}\]
to $K_\Lb$ is equivalent to requiring that the family \[\{\,(c,\exists_{f_i} U_i) \xrightarrow{\id_c} (c,U) \mid i \in I \,\}\]
is covering.


\subsection{Pre-nuclei of locales}

There are many proofs of the fact that $N(L)$ is a frame for each locale $L$.  For example, the proof found in \cite[Proposition II.2.5]{stone} shows that $N(L)$ is a complete Heyting algebra by defining the Heyting operation.  Alternative approaches using pre-nuclei are considered in \cite{simmons} and \cite{escardo}.  We will follow the argument of \cite{simmons} when developing our internal generalisation.  We briefly repeat the argument for locales below.

Recall from \cite[\S 2]{simmons} that a pre-nucleus on a locale $L$ is a (necessarily monotone) map $p \colon L \to L$ that is inflationary and finite-meet-preserving: that is, for all $U, V \in L$,
\[U \leqslant p(U) , \ \ p(U \land V) = p(U) \land p(V).\]
Thus, a nucleus on $L$ is simply an idempotent pre-nucleus.  Unlike nuclei, pre-nuclei are closed under composition.

We denote by $PN(L)$ the poset of pre-nuclei on $L$ ordered by $p \leqslant q$ if $p(U) \leqslant q(U)$ for all $U \in L$.  It is clear that $PN(L)$ is a complete lattice: for each subset $\{\, p^i \mid i \in I \,\} $ of $PN(L)$ and each $U \in L$,
\[\left(\bigwedge_{i \in I} p^i\right)(U) = \bigwedge_{i \in I} p^i(U), \ \ \left(\bigvee_{i \in I} p^i\right)(U) = \bigvee_{i \in I} p^i(U),\]
where $\bigwedge_{i \in I} p^i(U)$ and $\bigvee_{i \in I} p^i(U)$ are calculated as in $L$.  It follows by the infinite distributive law for $L$ that $PN(L)$ is also a frame.

The inclusion of nuclei into pre-nuclei $N(L) \hookrightarrow PN(L)$ has a left adjoint 
\[{(-)}^\infty \colon PN(L) \to N(L),\]
which we call the \emph{nucleation} (the \emph{nuclear reflection} in \cite{escardo} and \emph{idempotent closure} in \cite{simmons}), constructed as follows. For each ordinal $\alpha$ and limit ordinal $\lambda$, we define inductively:
\[p^0(U) = U, \ \ p^{\alpha + 1} (U) = p(p^\alpha(U)), \ \ p^\lambda(U) = \bigvee_{\alpha < \lambda} p^\alpha(U).\]
At each stage, the resultant map $p^\kappa \colon L \to L$ is a pre-nucleus.  Necessarily, as $L$ is small, there is a sufficiently large ordinal $\kappa$ such that $p^\kappa$ is idempotent and therefore a nucleus.  We label this by $p^\infty$.  We observe that if $p \leqslant q$ then $p^\infty \leqslant q^\infty$, that $p \leqslant p^\infty$, and if $j$ is a nucleus then $j = j^\infty$.  That is, nucleation is bifunctorial, and has units and counits yielding the adjunction
\[\begin{tikzcd}
	N(L) \ar[hook, ""{name=1}]{r} & PN(L) \ar[bend right, ""{name=0}]{l}['] {(-)^\infty} \ar[phantom, "\perp", from=0, to = 1]
\end{tikzcd}\]
witnessing $N(L)$ as a reflective subcategory of $PN(L)$.

Thus, $N(L)$, in addition to the meets constructed in Lemma \ref{lem:meetnucl}, has all joins: for a subset 
\[{\{\,j^i \mid i \in I \,\} \subseteq N(L)},\]
the join in $N(L)$ is given by $\left(\bigvee_{i \in I}j^i\right)^\infty$.  The infinite distributive law for $N(L)$, and hence the fact that $N(L)$ is a frame, is a consequence of Lemma \ref{infdistforN} below (the lemma is equivalent to \cite[Lemma 3.1]{simmons}).

\begin{lem}\label{infdistforN}
	Let $L$ be a locale, $n$ a nucleus on $L$, and let $\{\,p^i \mid i \in I\,\}$ be a collection of pre-nuclei on $L$.  The infinite distributive law
	\[\left(n \land \bigvee_{i \in I } p^i\right)^\infty = n \land \left( \bigvee_{i \in I} p^i\right)^\infty\]
	holds.
\end{lem}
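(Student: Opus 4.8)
The plan is to abbreviate $q := \bigvee_{i \in I} p^i$, the join in $PN(L)$, which is again a pre-nucleus computed pointwise, so that the asserted identity reads $(n \land q)^\infty = n \land q^\infty$. I would first record that the right-hand side is genuinely a nucleus: it is the pointwise meet of the two nuclei $n$ and $q^\infty$, and meets of nuclei are nuclei computed pointwise by \cite[Proposition II.2.5]{stone}. I would then prove the two inequalities separately, the second being where the work lies.

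For $(n \land q)^\infty \leqslant n \land q^\infty$, I would invoke directly the adjunction exhibiting $N(L)$ as a reflective subcategory of $PN(L)$ via $(-)^\infty$. Since $q \leqslant q^\infty$, we have $n \land q \leqslant n \land q^\infty$ in $PN(L)$; as $n \land q^\infty$ is a nucleus, the universal property of the nucleation yields $(n \land q)^\infty \leqslant n \land q^\infty$ immediately.

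The reverse inequality $n \land q^\infty \leqslant (n \land q)^\infty$ is the substance of the lemma. Writing $j := (n \land q)^\infty$ and using $q^\infty(U) = \bigvee_\alpha q^\alpha(U)$ together with the infinite distributive law in $L$, I have $(n \land q^\infty)(U) = \bigvee_\alpha (n \land q^\alpha)(U)$, so it suffices to prove, by transfinite induction on $\alpha$, that $n \land q^\alpha \leqslant j$ for every ordinal $\alpha$. The base case is immediate, since $n \land q^0 = \id \leqslant j$ as $j$ is inflationary, and the limit case follows from the induction hypothesis and distributivity exactly as above. The successor step is the crux: assuming $n(U) \land q^\alpha(U) \leqslant j(U)$ and setting $W := q^\alpha(U)$, I would argue
\[ n(U) \land q(W) \leqslant q(n(U)) \land q(W) = q\big(n(U) \land W\big) \leqslant q(j(U)), \]
using that $q$ is inflationary, meet-preserving and monotone, together with the induction hypothesis $n(U) \land W \leqslant j(U)$. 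Combining this with the trivial bound $n(U) \land q(W) \leqslant n(U) \leqslant n(j(U))$ then gives
\[ n(U) \land q(W) \leqslant n(j(U)) \land q(j(U)) = (n \land q)(j(U)) \leqslant j(j(U)) = j(U), \]
where the final steps use $n \land q \leqslant j$ and the idempotency of $j$. This is precisely $n \land q^{\alpha+1} \leqslant j$, closing the induction.

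I expect this successor computation, rather than the transfinite bookkeeping, to be the only delicate point; everything else is formal manipulation of the reflection and of frame distributivity. The governing idea is to exploit the meet-preservation of $q$ to pull the induction hypothesis inside $q$, and then to re-absorb the resulting $q(j(U))$ by using that the nucleus $j$ dominates $n \land q$ and is idempotent. This is the localic assertion \cite[Lemma 3.1]{simmons} recast in the present notation, and it is what subsequently delivers the infinite distributive law, and hence the frame structure, of $N(\Lb)$.
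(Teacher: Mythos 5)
Your route is genuinely different from the paper's. The paper runs a single transfinite induction establishing the stage-wise \emph{equality} $\bigl(n \land \bigvee_{i} p^i\bigr)^\kappa = n \land \bigl(\bigvee_{i} p^i\bigr)^\kappa$ for every ordinal $\kappa$, and never invokes the reflection; you instead split the statement into two inequalities, dispatching $(n \land q)^\infty \leqslant n \land q^\infty$ by the universal property of nucleation and proving $n \land q^\alpha \leqslant (n \land q)^\infty$ by a separate induction. That decomposition is sound and would be a perfectly good alternative proof.

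However, one step fails as written. In the successor case you use that $q = \bigvee_{i \in I} p^i$ preserves binary meets (the equality $q(n(U)) \land q(W) = q(n(U) \land W)$), and already in your opening line you assert that $q$ ``is again a pre-nucleus computed pointwise''. This is false for general (non-directed) families: in $\opens(\mathbb{R})$, take $a = (-\infty,0)$, $b = (0,\infty)$ and the open nuclei $p^1 = (a \to -)$, $p^2 = (b \to -)$; then $(p^1 \vee p^2)(a \land b) = \neg a \vee \neg b = a \vee b \neq \top$, whereas $(p^1 \vee p^2)(a) \land (p^1 \vee p^2)(b) = \top \land \top = \top$. (You are in good company: the paper's own preamble claims $PN(L)$ is a frame under pointwise joins, which suffers from the same counterexample; but notice that the paper's \emph{proof} of the lemma is careful never to use this -- it applies meet-preservation only to $n$ and to each individual $p^i$.) The repair is local, because you only need the \emph{inequality} $n(U) \land q(W) \leqslant q(n(U) \land W)$, which does hold, termwise:
\[
n(U) \land q(W) = \bigvee_{i \in I}\left(n(U) \land p^i(W)\right) \leqslant \bigvee_{i \in I}\left(p^i(n(U)) \land p^i(W)\right) = \bigvee_{i \in I} p^i\left(n(U) \land W\right) \leqslant q(j(U)),
\]
using frame distributivity, then inflationarity and meet-preservation of each $p^i$ separately, then monotonicity with your induction hypothesis. (This inequality is exactly Escard\'o's weaker prenucleus condition from \cite{escardo}, which, unlike meet-preservation, \emph{is} stable under arbitrary pointwise joins.) A similar caveat affects your claim that $q^\infty$, hence $n \land q^\infty$, is a nucleus; but your reflection argument only needs $n \land q^\infty$ to be inflationary, monotone and idempotent, which holds without any meet-preservation. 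With these two adjustments your proof goes through, and it then uses precisely the same per-component properties of $n$ and the $p^i$ as the paper's argument.
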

\begin{proof}
	We will show that $\left(n \land \bigvee_{i \in I } p^i\right)^\kappa = n \land \left( \bigvee_{i \in I} p^i\right)^\kappa$, for each ordinal $\kappa$, and thereby deduce the result.  The base case
	\[\left(n \land \bigvee_{i \in I } p^i\right)^0 = \id_L = n \land \left( \bigvee_{i \in I} p^i\right)^0\]
	is trivial.
	
	Suppose that $\left(n \land \bigvee_{i \in I } p^i\right)^\alpha = n \land \left( \bigvee_{i \in I} p^i\right)^\alpha$, then:
	\[\begin{split}
		\left(n \land \bigvee_{i \in I } p^i\right)^{\alpha + 1} & = \left(n \land \bigvee_{i \in I } p^i\right) \left(n \land \bigvee_{i \in I } p^i\right)^\alpha, \\
		& = n\left(\left(n \land \bigvee_{i \in I } p^i\right)^\alpha\right) \land \bigvee_{i \in I}p^i \left(\left(n \land \bigvee_{i \in I } p^i\right)^\alpha\right), \\
		& = n \land n \left(\left( \bigvee_{i \in I} p^i \right)^\alpha \right) \land \bigvee_{i \in I} p^i n \land p^i \left(\left( \bigvee_{i \in I} p^i \right)^\alpha \right).
	\end{split}\]
	Using that $n \leqslant n \left(\left( \bigvee_{i \in I} p^i \right)^\alpha \right)$, and $n \leqslant p^i n$, for all $i$, we have that:
	\[\begin{split}
		\left(n \land \bigvee_{i \in I } p^i\right)^{\alpha + 1} & = n \land \bigvee_{i \in I} p^i n \land p^i \left(\left( \bigvee_{i \in I} p^i \right)^\alpha \right), \\
		& = \bigvee_{i \in I} n \land p^i n \land p^i \left(\left( \bigvee_{i \in I} p^i \right)^\alpha \right), \\
		& = \bigvee_{i \in I} n \land p^i \left(\left( \bigvee_{i \in I} p^i \right)^\alpha \right), \\
		& = n \land \left(\bigvee_{i \in I} p^i \right)^{\alpha+1}.
	\end{split}\]
	
	Finally, if $\lambda$ is a limit ordinal such that $\left(n \land \bigvee_{i \in I } p^i\right)^\alpha = n \land \left( \bigvee_{i \in I} p^i\right)^\alpha$ for each ordinal $\alpha < \lambda$, then:
	\[\begin{split}
		\left(n \land \bigvee_{i \in I } p^i\right)^\lambda &= \bigvee_{\alpha < \lambda} \left(n \land \bigvee_{i \in I } p^i\right)^\alpha, \\
		& = \bigvee_{\alpha < \lambda} n \land \left( \bigvee_{i \in I} p^i\right)^\alpha, \\
		& = n \land \left( \bigvee_{i \in I} p^i\right)^\lambda.
	\end{split}\]
\end{proof}


\subsection{Internal pre-nuclei}

We now extend the theory of pre-nuclei and nucleation to the internal context.  In doing so we will observe that $N(\Lb)$ is a frame for every internal locale.

\begin{df}
	Let $\Lb$ be an internal locale of $\Sh(\cat,J)$.  An \emph{internal pre-nucleus} is a natural transformation $p \colon \Lb \to \Lb$ such that, for each $c \in \cat$, $p_c \colon \Lb_c \to \Lb_c$ is a pre-nucleus.  The set of internal pre-nuclei, denoted by $PN(\Lb)$, can be ordered by $p \leqslant q$ if $p_c(U) \leqslant q_c(U)$ for all $c \in \cat$ and $U \in \Lb_c$.
\end{df}

The poset of internal pre-nuclei $PN(\Lb)$ on an internal locale $\Lb$ of $\Sh(\cat,J)$ has all meets and all joins, which are computed `pointwise'.  Thus, by the infinite distributivity law for $\Lb_c$, for each $c \in \cat$, $PN(\Lb)$ is a frame.  We show that an internal nucleation can also be performed `pointwise'.

\begin{lem}
	Let $p \colon \Lb \to \Lb$ be an internal pre-nucleus on an internal locale $\Lb$, fibred over a category $\cat$.  The pointwise nucleations $p^\infty_c \colon \Lb \to \Lb_c$ of each component $p_c$ of $p$ are the components of an internal nucleus.
\end{lem}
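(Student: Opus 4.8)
The plan is to observe that, since each component $p_c$ is a pre-nucleus on the set-based locale $\Lb_c$, the set-theoretic theory of nucleation recalled above already guarantees that each pointwise nucleation $p_c^\infty \colon \Lb_c \to \Lb_c$ is a genuine nucleus. Consequently the only thing left to verify is that the family $\{\, p_c^\infty \mid c \in \cat \,\}$ is natural in $c$, i.e. that for every arrow $c \xrightarrow{g} d$ of $\cat$ the square relating $g^{-1} = \Lb(g)$ and the nucleations commutes:
\[ g^{-1} \circ p_d^\infty = p_c^\infty \circ g^{-1}. \]

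To establish this I would prove, by transfinite induction on the ordinal $\alpha$, the stronger statement that $g^{-1} \circ p_d^\alpha = p_c^\alpha \circ g^{-1}$ holds at every stage of the iteration. The base case $\alpha = 0$ is immediate since $p^0 = \id$. For the successor step I would use the naturality of the original internal pre-nucleus $p$, namely $g^{-1} \circ p_d = p_c \circ g^{-1}$, together with the induction hypothesis:
\[ g^{-1} \circ p_d^{\alpha+1} = g^{-1} \circ p_d \circ p_d^\alpha = p_c \circ g^{-1} \circ p_d^\alpha = p_c \circ p_c^\alpha \circ g^{-1} = p_c^{\alpha+1} \circ g^{-1}. \]

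The crucial step is the limit case. For a limit ordinal $\lambda$, the iterate $p^\lambda$ is defined by the (pointwise) join $p^\lambda(U) = \bigvee_{\alpha < \lambda} p^\alpha(U)$, and to commute this past $g^{-1}$ one needs $g^{-1}$ to preserve arbitrary joins. This is exactly where the hypothesis that $\Lb$ takes values in $\Frm_{\rmopen}$ --- so that each transition map $g^{-1}$ is in particular a frame homomorphism --- is used:
\[ g^{-1}\!\left( \bigvee_{\alpha < \lambda} p_d^\alpha(U) \right) = \bigvee_{\alpha < \lambda} g^{-1}\bigl( p_d^\alpha(U) \bigr) = \bigvee_{\alpha < \lambda} p_c^\alpha\bigl( g^{-1}(U) \bigr) = p_c^\lambda\bigl(g^{-1}(U)\bigr). \]

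Finally I would deduce naturality of $p^\infty$ itself. For a fixed arrow $c \xrightarrow{g} d$, the two fibres $\Lb_c$ and $\Lb_d$ are sets, so each of the two iterations stabilises: there is an ordinal $\alpha$ large enough that $p_c^\alpha = p_c^\infty$ and $p_d^\alpha = p_d^\infty$ simultaneously, and the identity established at that stage gives $g^{-1} \circ p_d^\infty = p_c^\infty \circ g^{-1}$, as required. I expect the limit step to be the only real content; the base and successor steps are formal, and no uniform bound on the stabilising ordinal across all of $\cat$ is needed, since naturality is checked one arrow --- hence two fibres --- at a time.
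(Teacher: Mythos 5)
Your proof is correct and takes essentially the same route as the paper's: transfinite induction on the iteration stages, with the limit-ordinal case handled by the fact that each transition map $g^{-1}$ is a frame homomorphism and hence preserves the (pointwise) joins defining $p^\lambda$. The paper writes out only the limit case explicitly, calling the rest ``easily shown by induction,'' so your spelled-out base and successor steps and the final stabilisation argument (choosing a common ordinal for the two fibres of a fixed arrow) are just a more complete rendering of the same argument.
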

\begin{proof}
	For each object $c \in \cat$, the nucleation $p_c^\infty \colon \Lb_c \to \Lb_c$ of $p_c$ is a nucleus, so it remains only show that they are natural in $c$.  This is easily shown by induction.  We will perform the case for a limit ordinal $\lambda$.  Let $g \colon c \to d$ be an arrow of $\cat$.  If, for all $\alpha < \lambda$, the square
	\[\begin{tikzcd}
		\Lb_d \ar{r}{g^{-1}} \ar{d}{p_d^\alpha} & \Lb_c \ar{d}{p_c^\alpha} \\
		\Lb_d \ar{r}{g^{-1}} & \Lb_c
	\end{tikzcd}\]
	commutes, then we have the desired equality
	\[g^{-1}\left( \bigvee_{\alpha < \lambda} p_d^\alpha\right) = \bigvee_{\alpha < \lambda} g^{-1}p_d^\alpha = \bigvee_{\alpha < \lambda} p_c^\alpha g^{-1}.\]
\end{proof}

As a result, we obtain a left adjoint to the inclusion $N(\Lb) \hookrightarrow PN(\Lb)$,
\[\begin{tikzcd}
	N(\Lb) \ar[hook, ""{name=1}]{r} & PN(\Lb), \ar[bend right, ""{name=0}]{l}['] {(-)^\infty} \ar[phantom, "\perp", from=0, to = 1]
\end{tikzcd}\]
just as we did for locales.  The functor $(-)^\infty \colon PN(\Lb) \to N(\Lb)$, the \emph{internal nucleation}, sends internal pre-nuclei to their pointwise nucleation.

\begin{thm}\label{NLisframe}
	Let $\Lb$ be an internal locale of $\Sh(\cat,J)$.  The poset $N(\Lb)$ of internal nuclei is a frame whose frame operations can be computed `pointwise' in that, for each subset $\{\,j^i \mid i \in I\,\} \subseteq N(\Lb)$ and each object $c$ of $\cat$, there are equalities
	\begin{equation}\label{pontwisecompt}
		\left(\bigwedge_{i \in I} j^i\right)_c = \bigwedge_{i \in I} j^i_c, \ \ \left(\bigvee_{i \in I} j^i\right)_c = \bigvee_{i \in I} j^i_c,
	\end{equation}
	where $\bigwedge_{i \in I} j^i_c$ and $\bigvee_{i \in I} j^i_c$ are computed as in $N(\Lb_c)$.
\end{thm}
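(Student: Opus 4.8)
The plan is to show that $N(\Lb)$ is a frame with pointwise operations by transporting the set-based argument through the reflective adjunction between internal pre-nuclei and internal nuclei. The meet formula is already in hand: \cref{lem:meetnucl} gives $\left(\bigwedge_{i \in I} j^i\right)_c = \bigwedge_{i \in I} j^i_c$, and since meets in $N(\Lb_c)$ are themselves computed pointwise in $\Lb_c$, the right-hand side is exactly the meet of $\{\,j^i_c\,\}$ in $N(\Lb_c)$. This simultaneously shows that $N(\Lb)$ has all meets and is therefore a complete lattice.

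For the joins I would exploit that the inclusion $N(\Lb) \hookrightarrow PN(\Lb)$ is reflective, with reflector the internal nucleation $(-)^\infty$ established just above. Since $PN(\Lb)$ has pointwise joins, the join of $\{\,j^i\,\}$ in $N(\Lb)$ is the nucleation of the pointwise join taken in $PN(\Lb)$. Because the preceding lemma shows that nucleation is computed componentwise --- the $c$-component of $p^\infty$ is $(p_c)^\infty$ --- evaluating at $c$ yields
\[
\left(\bigvee_{i \in I} j^i\right)_c = \Bigl(\bigvee_{i \in I} j^i_c\Bigr)^\infty,
\]
where the inner join is the pointwise join in $PN(\Lb_c)$. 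But reflecting the pointwise join of nuclei along $N(\Lb_c) \hookrightarrow PN(\Lb_c)$ is precisely how the join in $N(\Lb_c)$ is formed, so the right-hand side equals $\bigvee_{i \in I} j^i_c$ as computed in $N(\Lb_c)$. This secures the second equality of \cref{pontwisecompt}.

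It then remains to verify the infinite distributive law in $N(\Lb)$, which I would reduce to the pointwise case rather than re-running the transfinite induction of \cref{infdistforN} internally. For a nucleus $n$ and a family $\{\,j^i\,\}$ of internal nuclei, evaluating both $n \land \bigvee_{i} j^i$ and $\bigvee_{i}(n \land j^i)$ at an object $c$ and applying the pointwise formulae for meets and joins just obtained reduces the required identity to
\[
n_c \land \bigvee_{i \in I} j^i_c = \bigvee_{i \in I} \bigl(n_c \land j^i_c\bigr)
\]
in $N(\Lb_c)$. Since $\Lb_c$ is a set-based locale, $N(\Lb_c)$ is a frame --- the classical fact recovered from \cref{infdistforN} --- so this identity holds; as two internal nuclei agreeing at every $c$ are equal, distributivity follows in $N(\Lb)$, completing the proof that $N(\Lb)$ is a frame.

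The genuinely load-bearing step is the pointwise computation of joins, since it is here that the reflective adjunction and the componentwise behaviour of nucleation must be combined; once joins are known to be pointwise, both the meet formula and infinite distributivity reduce cleanly to the already-established set-based theory of $N(\Lb_c)$. The subtle point to check is that the pointwise join of a family of nuclei, a priori only an internal pre-nucleus, genuinely reflects to the $N(\Lb_c)$-join fibrewise --- but this is exactly the content of the nucleation adjunction applied at each $c$.
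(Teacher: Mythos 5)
Your proof is correct and follows essentially the same route as the paper: pointwise meets via \cref{lem:meetnucl}, joins obtained as the (componentwise-computed) nucleation of the pointwise join in $PN(\Lb)$, and infinite distributivity reduced fibrewise to the already-established fact that each $N(\Lb_c)$ is a frame. The extra detail you supply --- the reflective-adjunction justification for the join formula and the observation that internal nuclei agreeing at every component are equal --- only makes explicit what the paper's terser proof leaves implicit.
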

\begin{proof}
	We saw in Lemma \ref{intnucgivessubloc} that $N(\Lb)$ has all meets and that these are computed pointwise.  The join of $\{\,j^i \mid i \in I\,\} \subseteq N(\Lb)$ is the nucleation of the join of $\{\,j^i \mid i \in I\,\} $ as internal pre-nuclei.  Since the nucleation of internal pre-nuclei is computed pointwise, as are joins in $PN(\Lb)$, the joins in $N(\Lb)$ are also computed pointwise in the sense of (\ref{pontwisecompt}).  Finally, as $N(\Lb_c)$ satisfies the infinite distributivity law for each $c \in \cat$, we obtain the infinite distributivity law for $N(\Lb)$.
\end{proof}

Since every Grothendieck topos $\topos$ is the topos of sheaves $\Sh(\Lb)$ for some internal locale $\Lb$ (see, for example, \cite[Proposition VII.3.1]{JT}), and also because $\Sub(\topos) \cong N(\Lb)\op$, we have recovered the well-known fact that the poset of subtoposes of a Grothendieck topos is a co-frame.

\begin{rem}
		Let $\Lb \colon \cat\op \to \Frm_{\rm open}$ be an internal locale of $\Sh(\cat,J)$.  Since the frame operations of $N(\Lb)$ are computed `pointwise', for each object $c$ of $\cat$, the projection $\pi_c \colon N(\Lb) \to N(\Lb_c)$ that sends an internal nucleus $j \colon \Lb \to \Lb$ to its component $j_c \colon \Lb_c \to \Lb_c$ at $c$ preserves all joins and meets.  Therefore, $\pi_c \colon N(\Lb) \to N(\Lb_c)$ is an open frame homomorphism.
\end{rem}









\refs

\bibitem [Artin, Grothendieck \& Verdier, 1973]{SGA4-3} M. Artin, A. Grothendieck and J.L. Verdier, {\it Théorie des Topos et Cohomologie Etale des Schémas, Séminaire de Géométrie Algébrique du Bois-Marie 1963-1964 (SGA 4)}. Springer Berlin-Heidelberg, 1973.

\bibitem [Caramello, 2018]{TST} O. Caramello, {\it Theories, sites, toposes: relating and studying mathematical theories through topos-theoretic `bridges'}.  Oxford University Press, 2018.

\bibitem [Caramello, 2020]{denseness} O. Caramello, ``Denseness conditions, morphisms and equivalences of toposes'', 2020. arXiv:\href{https://arxiv.org/abs/1906.08737}{1906.08737}.

\bibitem [Caramello, 2022]{fibredsites} O. Caramello, ``Fibred sites and existential toposes'', 2023. arXiv:\href{https://arxiv.org/abs/2212.11693}{2212.11693}.

\bibitem [Caramello \& Zanfa, 2021]{relsites} O. Caramello and R. Zanfa, ``Relative topos theory via stacks'', 2021. arXiv:\href{https://arxiv.org/abs/2107.04417}{2107.04417}.

\bibitem [Davey \& Priestley, 1990]{order} B. A. Davey and H. A. Priestley, {\it Introduction to Lattices and Order}. Cambridge University Press, 1990.

\bibitem [Escardó, 2003]{escardo} M. Escardó, ``Joins in the frame of nuclei''. {\it Applied categorical cites}, vol. 11, pp. 117--124, 2003.

\bibitem [Giraud, 1972]{giraud} J. Giraud, ``Classifying topos'', in {\it Toposes, Algebraic Geometry and Logic}, F. W. Lawvere, Ed., Springer Berlin Heidelberg, 1972, pp. 43--56.

\bibitem [Johnstone, 1977]{topos} P. T. Johnstone, {\it Topos theory}. Academic Press, 1977.

\bibitem [Johnstone, 1980]{open} P. T. Johnstone, ``Open maps of toposes''. {\it Manuscripta mathematica}, vol. 31, pp. 217--248, 1980.

\bibitem [Johnstone, 1981]{fact1} P. T. Johnstone, ``Factorization theorems for geometric morphisms, I''. {\it Cahiers de Topologie et Géométrie Différentielle Catégoriques}, vol. 22, no. 1, pp. 3--17, 1981.

\bibitem [Johnstone, 1982]{stone} P. T. Johnstone, {\it Stone spaces}, ser. Cambridge Studies in Advanced Mathematics. Cambridge University Press, 1982.

\bibitem [Johnstone, 1983]{pointless} P. T. Johnstone, ``The point of pointless topology''. {\it Bulletin of the American Mathematical Society}, vol. 8, no. 1, pp. 41--53, 1983.

\bibitem [Johnstone, 2002]{elephant} P. T. Johnstone, {\it Sketches of an Elephant: A topos theoretic compendium, Vol. 1 and 2}. Oxford University Press, 2002.

\bibitem [Joyal \& Tierney, 1984]{JT} A. Joyal and M. Tierney, ``An extension of the Galois theory of Grothendieck''. {\it Memoirs of the American Mathematical Society}, vol. 51, 1984.

\bibitem [Kock \& Moerdijk, 1991]{kockmoer} A. Kock and I. Moerdijk, ``Presentations of étendues''. {\it  Cahiers de Topologie et Géométrie Différentielle Catégoriques}, vol. 32, no. 2, pp. 145--164, 1991.

\bibitem [Lawvere, 1969]{adjoint} F. W. Lawvere, ``Adjointness in foundations''. {\it Dialectica}, vol. 23, no. 3/4, pp. 281--296, 1969.

\bibitem [Mac Lane \& Moerdijk, 1994]{SGL} S. Mac Lane and I. Moerdijk, {\it Sheaves in Geometry and Logic: A First Introduction to Topos Theory}. Springer New York, 1994.

\bibitem [Picado \& Pultr, 2012]{picadopultr} J. Picado and A. Pultr, {\it Frames and Locales: Topology without points}. Springer Basel, 2012.

\bibitem [Seely, 1983]{seely} R. A. G. Seely, ``Hyperdoctrines, natural deduction and the Beck condition''. {\it Mathematical Logic Quarterly}, vol. 29, no. 10, pp. 505--542, 1983.

\bibitem [Simmons, 1989]{simmons} H. Simmons, ``Near-discreteness of modules and spaces as measured by Gabriel and Cantor''. {\it Journal of Pure and Applied Algebra}, vol. 56, no. 2, pp. 119--162, 1989.

\bibitem [Street, 1980]{streetfibr1} R. Street, ``Fibrations in bicategories''. {\it {Cahiers de Topologie et Géométrie Différentielle Catégoriques}}, vol. 21, no. 2, pp. 111--160, 1980.
	
\bibitem [Street, 1981]{streetfibr2} R. Street, ``Conspectus of variable categories''. {\it Journal of Pure and Applied Algebra}, vol. 21, pp. 307--338, 1981.

\bibitem [Wrigley, 2021]{myself-presentation} J. L. Wrigley, ``The logic and geometry of localic morphisms''.  Talk at {\it Toposes Online}, 2021.  Available [Online]: \url{https://aroundtoposes.com/wp-content/uploads/2021/07/WrigleySlidesToposesOnline.pdf}.

\endrefs

\end{document}